\numberwithin{equation}{section}
\newtheorem{theorem}{Theorem}[section]
\newtheorem{lemma}{Lemma}[section]
\newtheorem{remark}{Remark}[section]
\newtheorem{proposition}{Proposition}[section]
\newtheorem{corollary}{Corollary}[section]
\newcommand{\ba}{\begin{aligned}}
\newcommand{\ea}{\end{aligned}}
\newcommand{\be}{\begin{equation}}
\newcommand{\ee}{\end{equation}}
\newcommand{\bnn}{\begin{eqnarray*}}
\newcommand{\enn}{\end{eqnarray*}}
\newcommand{\thatsall}{\hfill$\Box$}
\date{}
\title{Large-Time Behavior of the 2D Compressible Navier-Stokes System in Bounded Domains with Large Data and Vacuum}
 \author{Xinyu F{\small AN}$^{a}$, Jing L{\small I}$^{b, c}$, Xue W{\small ANG}$^{b}$ \thanks{Email addresses: fanxinyu17@mails.ucas.edu.cn (X. Y. Fan), ajingli@gmail.com (J. Li), xuewa@amss.ac.cn (X. Wang)}\\
{\normalsize a. Institute of Applied Physics and Computational Mathematics,}\\
{\normalsize Beijing 100088, P. R. China};\\
{\normalsize b.  School of Mathematical Sciences,}\\
{\normalsize  University of Chinese Academy of Sciences, Beijing 100049, P. R. China;} \\
{\normalsize c. Institute of Applied Mathematics, AMSS,}\\
{\normalsize \& Hua Loo-Keng Key Laboratory of Mathematics,}\\
{\normalsize  Chinese Academy of Sciences,    Beijing 100190, P. R. China}
}
\begin{document}
\maketitle
\begin{abstract}
The large time behavior of the unique strong solution to the barotropic compressible Navier-Stokes system is studied with large external forces and initial data, where the shear viscosity is a positive constant and the bulk one is proportional to a power of the density.    
  Some  uniform   estimates on the $L^p$-norm of the density are   established, and then deduce that the density   converges to its steady state   in $L^p$-spaces, which transforms the large external force into a small one in some sense. Moreover, to deal with the obstacles brought by boundary,  the conformal mapping and the pull back Green function are applied to give a point-wise representation of the effective viscous flux, and then make use of slip boundary conditions to cancel out the singularity.   \\
\par\textbf{Keywords:} Compressible flow; Uniform upper bound; Large time behavior; Large date; Vacuum
\end{abstract}

\section{Introduction and main results}\label{SEC1}
$\quad$ We study the barotropic compressible Navier-Stokes system with the external force in a bounded two-dimensional (2D) domain $\Omega$:
\begin{equation}\label{S101}
\begin{cases}
\rho_t+\mathrm{div}(\rho u)=0, \\
(\rho u)_t+\mathrm{div}(\rho u\otimes u)
-\mu\Delta u-\nabla\big((\mu+\lambda)\mathrm{div} u\big)
+\nabla P=\rho\nabla f,
\end{cases}
\end{equation}
where $t\geq 0$ is time, $x\in \Omega$ is the spatial coordinate; $\rho=\rho(x, t)$ and $u=\big(u_1(x, t), u_2(x, t)\big)$ represent the unknown density and velocity respectively, and $f$ is a given external force.
The pressure $P$ is given by
\begin{equation}\label{S102}
P=a\rho^\gamma,\  a>0,\  \gamma>1.
\end{equation}
While the shear viscosity $\mu$ and the
bulk viscosity $\lambda$ satisfy:
\begin{equation}\label{S103}
0<\mu=\mathrm{constant}, \ \lambda(\rho)=b\rho^\beta,
\end{equation}
where $b$ and $\beta$ are positive constants. Without lost of generality, let us set
$a=b=1.$

The system is subject to the given initial data
\begin{equation}\label{S104}
\rho(x, 0)=\rho_0(x), \ \rho u(x, 0)=m_0(x), \ x\in\Omega.
\end{equation}
Moreover, we mainly investigate two types of boundary conditions.

\textit{Case 1.} Suppose that $\Omega\subset\mathbb{R}^2$ is bounded $C^{2,1}$ domain, and we study the Navier-slip boundary conditions:
\begin{equation}
u\cdot n=0,\ \ \mathrm{rot} u=-Ku\cdot n^\bot\ \mathrm{on}\ \partial\Omega,  \label{S105}
\end{equation}
where $K$ is a non-negative smooth function on $\partial \Omega$, $n=(n_1,n_2)$ denotes the unit outer normal vector on  $\partial \Omega$, while $n^\bot$ is the unit tangential vector on  $\partial \Omega$ denoted by
\be\label{S106} n^\bot\triangleq (n_2,-n_1).\ee
In addition, we us extend $n$, $n^\bot$, and $K$ smoothly to $\overline\Omega$.  

\textit{Case 2.} We also consider the periodic domain $\Omega=(0,1)\times(0,1)$, and the solution $(\rho, u)$ are functions with period 1 in both directions.

There is a large number of literature about the strong solvability for the multi-dimensional compressible Navier-Stokes system with constant viscosity coefficients.  The history of this area may trace back to Nash \cite{Nash} and Serrin \cite{ser1},  who established the local existence and uniqueness of classical solutions respectively with the density away from vacuum. The first result of global classical solutions was due to Matsumura-Nishida \cite{1980The} for the initial data close to a non-vacuum equilibrium in $H^{3}.$ Hoff \cite{1995Global,hoff2005} then studied the problem with discontinuous initial data and introduced a new type of a priori estimates on the material derivative $\dot{u}$. The major breakthrough in the frame of weak solutions was due to Lions \cite{1998Mathematical}. Under the assumption that the initial energy is finite, he obtained the global existence of weak solutions with vacuum in 3D domains provided the adiabatic constant $\gamma\ge\frac{9}{5}$, which was further released to $\gamma>\frac{3}{2}$ by Feireisl et al\cite{feireisl2004dynamics}. Recently, Huang-Li-Xin \cite{hlx21} and Li-Xin \cite{lx01} established the global existence and uniqueness of classical solutions in 3D and 2D whole spaces, where the initial energy was small enough and the density was allowed to vanish and even had compact support initially. More recently, Cai-Li \cite{caili01} extended above results to general bounded domains with the velocity field satisfying the Navier-slip boundary conditions.

The above results were mainly concerned with the model with constant viscosity coefficients and  the small initial data. Positive results for the general model without restriction on the size of initial data and external force are rather fewer.

We first mention that, according to the assertions given by Liu-Xin-Yang \cite{LXY}, if we transform Boltzmann equations into Navier-Stokes equations by  the Chapman-Enskog expansions, the viscosity will depend on the temperature $\theta$. Especially, the  barotropic flow admits that $$\theta\sim\rho^{\gamma-1},$$ thus we arrive at the model with viscosity depending on the
density. Moreover, Lions \cite{1998Mathematical} indicated that such model was close to the shallow water system. Indeed, it appears naturally in the geophysical flow as well, see \cite{BD1, BD2, BD3, BD4, BD5} for example. Therefore, it is of great importance to study the compressible Navier-Stoke equations with the density-dependent viscosity. In fact,
the system \eqref{S101} is an important example of such type introduced by Vaigant-Kazhikov .

In particular, Vaigant-Kazhikov \cite{vaigant1995}   obtained the unique global strong solution without vacuum for \eqref{S101}--\eqref{S105} in periodic domains, 
under the assumption that $$\beta>3.$$  Later, Perepelitsa \cite{PP} showed the uniform upper bound of the density and the large time behavior under the  additional  restriction $$\gamma=\beta=3.$$  Jiu-Wang-Xin \cite{jwx} then generalized the result in \cite{vaigant1995} to the case containing vacuum.
Recently, for the problem \eqref{S101} in the periodic domains or the whole space with the density allowed to vanish, Huang-Li \cite{HL,hl21} (see \cite{jwx1} also) relaxed the crucial condition $\beta>3$ to  $$\beta>\frac{4}{3}.$$ As indicated by \cite{Fri}, this result seems to be the optimal result for general data up to now even for the periodic case. While for spherical symmetric case, Li-Xin-Zhang \cite{LXZ} studied the free boundary problem and obtained the unique global strong solution, under the restriction that $\beta>1$.

An important problem closely related with the above existence assertions is the large time behavior of the solution, especially for the system under the action of the potential external force $F=\nabla f$, whose steady state in no more constant.  Feireisl-Petzeltová \cite{FP} and Novotny-Straškraba \cite{NS} proved that the density of any global weak solutions converged to the steady state in $L^p$ sense as $t\rightarrow\infty$. Matsumura-Padula \cite{1992Stability} derived both the existence of the unique classical solution to the problem \eqref{S101} and the stability of the steady state. Huang-Li-Xin \cite{HLX} established the large time behavior of strong and classical solutions to the 2D Stokes approximation equations  without any restriction on the size of initial date and external force.

Based on the global existence and uniqueness of the strong solution to the problem \eqref{S101}--\eqref{S105} in general simply connected domains in our previous paper \cite{FLL}, the aim of the present paper is to study the large time behavior of this solution.

Before stating the main results, we explain the notations and conventions used throughout the paper.
For a positive integer $k$ and constant $1 \leq p \leq \infty$,  the standard $L^p$ spaces and Sobolev ones are denoted as follows:
\begin{equation*}\begin{cases}
L^p=L^p(\Omega), \,\, W^{k, p}=W^{k, p}(\Omega), \,\, H^k=W^{k, 2}(\Omega) ,\\
\|f\|_{L^p}=\|f\|_{L^p(\Omega)}, \,\,\|f\|_{W^{k,p}}=\|f\|_{W^{k,p}(\Omega)},\,\,\|f\|_{H^k}=\|f\|_{W^{k,2}(\Omega)},\\
\tilde{H^1}=\{u\in H^1(\Omega)|u\cdot n=0,\mathrm{rot}u=-Ku\cdot n^\bot\ \text{on} \,\partial\Omega\}.
\end{cases}\end{equation*}
The transpose of gradient, the material derivative and the integral average of $f$ are respectively given by
\begin{equation}\notag
\nabla^{\bot}\triangleq(\partial_2, -\partial_1),\ \ \  \frac{d}{dt}f=\dot{f}\triangleq\frac{\partial}{\partial t}f+u\cdot\nabla f, \ \ \
\bar{f}=\frac{1}{|\Omega|}\int_\Omega fdx.
\end{equation}
For convenience, we set $$\omega=\mathrm{rot}u=\partial_2u_1-\partial_1u_2.$$

Note that the steady state
$\rho_s$ of the system \eqref{S101} solves the following equation:
\begin{equation}\label{S107}
\begin{cases}
\nabla \rho_s^\gamma=\rho_s\nabla f\quad\
\mbox{ in }\Omega,\\
\int_\Omega\rho_sdx=\int_\Omega\rho_0dx.
\end{cases}
\end{equation}
As indicated by Matsumura-Padula \cite{1992Stability}, the system \eqref{S107} is uniquely solvable under the condition that
\begin{equation}\label{CC101}
\int_\Omega\rho_0dx> \int_\Omega\left(\frac{\gamma-1}{\gamma}\bigg( f-\inf_{\overline{\Omega}}f\bigg)\right)^{\frac{1}{\gamma-1}}dx.
\end{equation}
More discussions on the steady state can be found in  Section \ref{SS22}.

We will focus on the Navier-slip boundary conditions, and the main results can be stated as below. 
\begin{theorem}\label{T11}
Let $\Omega\subset\mathbb{R}^2$ be a simply connected  $C^{2,1}$ bounded domain and
\begin{equation}\label{S110}
\beta>\frac{4}{3},\ \gamma>1.
\end{equation}
For given $q>2$ and $f\in H^2$,  suppose that the initial data $(\rho_0,u_0)$ satisfies
\begin{equation*}
\begin{split}
\rho_0\in W^{1,q},~~u_0\in\tilde{H}^1,~~ m_0=\rho_0u_0,\\
\end{split}
\end{equation*}
then the initial boundary value problem \eqref{S101}--\eqref{S105} admits a unique global strong solution $(\rho,u)$ in $\Omega\times[0,\infty)$.

Moreover, if the initial density $\rho_0$ meets \eqref{CC101}, then
the global solution $(\rho, u)$ obtained above has following properties:

1) The density $\rho$ is uniformly bounded. In precise, there is a constant $C$ determined by $\beta,\ \gamma,\ \mu,\ \rho_0$, and $ u_0$ such that
\begin{equation}\label{S111}
\sup_{0\leq t\leq T}\|\rho(t)\|_{L^\infty}\leq C.
\end{equation}

2) The strong solution $(\rho, u)$ converges to the steady state $(\rho_s, 0)$ in exponential rate. Precisely, $\forall p\in[1,\infty)$, there exist constants $C$ and $\xi$ depending only on $p,\ \beta,\ \gamma,\ \mu,\ \rho_0$, and $ u_0$ such that, for any $t\in (1,\infty),$
\begin{equation}\label{S112}
\|\rho(\cdot, t)-\rho_s\|_{L^p}+\|\nabla u(\cdot, t)\|_{L^p}\leq Ce^{-\xi t}.
\end{equation}

\end{theorem}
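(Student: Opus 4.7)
\textbf{Proof plan for Theorem \ref{T11}.}

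Global existence is already available from our previous paper \cite{FLL}, so the task is to promote the (possibly time-growing) a priori estimates to the \emph{uniform-in-time} bound \eqref{S111} and then extract the exponential rate \eqref{S112}. The natural plan is: (i) run a weighted energy identity against the stationary state $(\rho_s,0)$; (ii) exploit the effective viscous flux to pass from integral to pointwise control on $\rho$; (iii) use a Zlotnik-type ODE along particle trajectories to obtain the uniform $L^{\infty}$ bound; and (iv) close an exponential Gronwall loop once the density has been driven close to $\rho_s$ in $L^{p}$. Because all of this must be done on a bounded $C^{2,1}$ domain with Navier-slip conditions, boundary manipulations are the central new difficulty.

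First I would subtract the stationary relation \eqref{S107} from the momentum equation to replace $\rho\nabla f$ by $\rho\nabla(P_s/\rho_s)$-type terms, and then test against $u$ to get the basic energy identity
\[
\frac{d}{dt}\int_\Omega\Bigl(\tfrac{1}{2}\rho|u|^2+\Pi(\rho,\rho_s)\Bigr)dx+\int_\Omega\!\Bigl(\mu|\nabla u|^2+(\mu+\rho^\beta)(\mathrm{div}\,u)^2\Bigr)dx=0,
\]
where $\Pi$ is the relative potential associated with the pressure. The slip condition lets one write $\int|\nabla u|^2=\int\omega^2+\int(\mathrm{div}\,u)^2+$ (controlled boundary terms involving $K$), so this gives uniform-in-time $L^2$ control of $\sqrt{\rho}u$ and of the density via $\Pi$. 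Next I would obtain higher $L^p$ estimates of $\rho-\rho_s$ by testing the mass equation against $(\rho^\gamma-\rho_s^\gamma)^{p-1}$ and exchanging $\mathrm{div}\,u$ with the effective viscous flux
\[
G\triangleq(2\mu+\rho^\beta)\mathrm{div}\,u-(P(\rho)-P(\rho_s)).
\]
The elliptic problem $\Delta\Phi=\rho\dot u-\rho\nabla f+\nabla(P-P_s)$ that $G$ satisfies must now be solved on $\Omega$ with the boundary data forced by the slip condition.

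The key technical step --- and the hardest part of the argument --- is the pointwise representation of $G$ on a general simply connected $C^{2,1}$ domain. The plan is to apply the Riemann mapping to transport $\Omega$ to the unit disk $\mathbb{D}$, pull back the Green function of $\Delta$ on $\mathbb{D}$ with Neumann data, and write
\[
G(x)=\int_{\Omega}\mathcal{N}(x,y)\bigl(\rho\dot u-\rho\nabla f\bigr)(y)\,dy+\text{boundary contribution}.
\]
The slip conditions $u\cdot n=0$ and $\mathrm{rot}\,u=-Ku\cdot n^\bot$ are exactly what is needed to cancel the leading singular boundary term produced when integrating $\nabla(\mathrm{div}\,u)$ against the kernel; the remainder is a weakly singular operator with conformally controlled kernel. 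I would use this representation to bound $\|G\|_{L^\infty}$ by $\|\rho\dot u\|_{L^q}$ plus lower order terms. Once $\|G\|_{L^\infty}$ is controlled, one inserts it into the transport equation for $\rho^\beta$: from $(\rho^\beta)_t+u\cdot\nabla\rho^\beta=-\beta\rho^\beta\,\mathrm{div}\,u$ one derives an inequality of the form
\[
\frac{D}{Dt}\log\bigl(1+\rho^\beta\bigr)+\frac{\beta}{2\mu+\rho^\beta}\bigl(P(\rho)-P(\rho_s)\bigr)\le\frac{\beta|G|}{2\mu+\rho^\beta},
\]
and a Zlotnik-type lemma, combined with the time-integrability of the right-hand side secured by the previous $L^p$ estimates and by $\beta>4/3$, delivers \eqref{S111}.

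With the uniform upper bound in hand, I would return to the energy identity to show that $E(t)=\int(\tfrac12\rho|u|^2+\Pi)dx$ satisfies $\frac{d}{dt}E+cE\le0$ for large $t$. The point is that once $\rho$ is bounded and close to $\rho_s$ in $L^p$, the Poincaré-type inequality
\[
\int\rho|u|^2\le C\int|\nabla u|^2,\qquad \int\Pi(\rho,\rho_s)\le C\int|P(\rho)-P(\rho_s)|\,|\mathrm{div}\,u|
\]
can be closed using $G$ (with the external force no longer appearing as a ``large'' obstruction since it has been absorbed into the steady state). This yields exponential decay of $E$, hence of $\|\rho-\rho_s\|_{L^2}$ and $\|\nabla u\|_{L^2}$; the $L^p$ version \eqref{S112} then follows by interpolation with the uniform $L^\infty$ bound on $\rho$ and standard higher-order estimates on $u$.

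The main obstacle I expect is the boundary analysis: deriving a genuinely pointwise, singularity-free representation of the effective viscous flux on a general $C^{2,1}$ simply connected domain requires the conformal map plus the careful use of both slip conditions, and all constants must be tracked to be independent of $t$ in order to feed into the Zlotnik step.
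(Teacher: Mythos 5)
Your skeleton (energy estimate, effective viscous flux via the conformally pulled-back Green function, Zlotnik along trajectories, then exponential Gronwall) matches the paper's architecture, but several of the load-bearing steps are missing or would not close as written. First, the uniform-in-time $L^p$ bound on the density is not obtained in the paper by testing the mass equation against $(\rho^\gamma-\rho_s^\gamma)^{p-1}$; that route gives no mechanism to absorb the large force, the boundary contribution, or the average $\bar F$ uniformly in $t$. The paper instead rewrites the mass equation as \eqref{S314}, i.e.\ it moves the time-derivative part $\tilde F_1$ of the flux \emph{inside} the material derivative, truncates with $\Phi=(\theta(\rho)+\tilde F_1-M)_+$, and crucially \emph{keeps} the damping term $\rho^\gamma$ to cancel the perturbation, force, boundary and average terms (Lemma \ref{L32}, Proposition \ref{P31}, estimate \eqref{CC305}); this also yields the time-integrability $\int_0^T\int(\rho-M_1)_+^{\alpha-1}dxdt\le C$ which feeds all later steps. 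Likewise, your remark that the large force is ``absorbed into the steady state'' hides the real work: the paper must first prove $\|\rho-\rho_s\|_{L^p}\to 0$ by a div-curl/compensated-compactness argument (Proposition \ref{L34}) and then multiply the momentum equation by $\rho_s^{-1}\mathcal{B}(\rho-\rho_s)$ to gain a second-order Taylor expansion of the pressure and obtain $\int_0^T A_2^2\,dt\le C$ (Proposition \ref{L35}); nothing in your plan produces this smallness.

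Second, your bound $\|G\|_{L^\infty}\lesssim\|\rho\dot u\|_{L^q}$ would not close the Zlotnik step: $\rho\dot u$ is only controlled through $\int_0^T B^2/(1+A^2)\,dt\le CR_T^{1+\varepsilon}$ (Proposition \ref{L44}), so a pointwise-in-time $L^\infty$ bound of $G$ by $\|\rho\dot u\|_{L^q}$ is not uniformly integrable and would destroy the exponent bookkeeping. The paper avoids this precisely by splitting $G=-\frac{d}{dt}I+J+K$ in the Green-function representation (Lemma \ref{L46}), integrating the $I$-part along trajectories using the extra integrability $\int\rho|u|^{2+\nu}dx\le C$ with $\nu= R_T^{-\beta/2}\nu_0$ (Proposition \ref{L41}), and estimating $J$ via Morrey together with the $\|\nabla u\|_{L^p}$ bounds of Proposition \ref{L43}; this is what produces $\int_{t_1}^{t_2}-G\,dt\le CR_T^{1+\varepsilon}(t_2-t_1)+CR_T^{1+\beta/4+2\varepsilon}$ and hence the closure $R_T^{\beta}\le CR_T^{1+\beta/4+2\varepsilon}$, where $\beta>4/3$ is exactly what makes $1+\beta/4<\beta$. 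Finally, your decay argument rests on the unjustified inequality $\int\Pi(\rho,\rho_s)dx\le C\int|P-P_s||\mathrm{div}u|\,dx$; the basic energy dissipation contains no term controlling $\|\rho-\rho_s\|_{L^2}$, and the paper supplies it by the same $\mathcal{B}(\rho-\rho_s)$ multiplier, giving $A_2^2\le D'+CA_1^2$ (Proposition \ref{P42}), which is added with a small weight to the energy inequality before Gronwall. Without these three ingredients the uniform bound \eqref{S111} and the rate \eqref{S112} do not follow from your outline.
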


In the same manners, the above conclusions are valid for periodic domain without the external force as well.
\begin{theorem}\label{T12}
Let $\Omega=[0,1]\times[0,1]$ be the periodic domain. Suppose \eqref{S110} is valid and the external force $\nabla f\equiv 0$. If the periodic initial data $(\rho_0,u_0)$ satisfies
$$\rho_0\in W^{1,q},~~u_0\in H^1,~~m_0=\rho_0u_0,$$
for some $q>2$, then the initial boundary value problem \eqref{S101}--\eqref{S104} admits a unique global periodic strong solution $(\rho, u)$ in $\Omega\times[0,\infty)$. Moreover, the large time behavior \eqref{S111} and \eqref{S112} are true as well.
\end{theorem}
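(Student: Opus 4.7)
The plan is to follow the same overall scheme as the proof of Theorem \ref{T11}, exploiting the substantial simplifications afforded by the periodic setting. First I would observe that, since $\nabla f\equiv 0$, the steady-state equation \eqref{S107} reduces to $\nabla\rho_s^\gamma=0$, so that $\rho_s$ is the positive constant $\bar\rho_0=|\Omega|^{-1}\int_\Omega\rho_0\,dx$. The global existence and uniqueness of a strong solution in the periodic setting under $\beta>\frac{4}{3}$ has already been established by Huang--Li \cite{HL,hl21}, so the existence part of Theorem \ref{T12} requires no new argument; the main task is to prove the uniform density bound \eqref{S111} and the exponential decay \eqref{S112}.

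Next I would import the a priori estimates developed for Theorem \ref{T11}, checking that each step goes through on the torus. The basic energy identity, the $L^p$-integrability estimates on the density, and the analysis of the effective viscous flux
\[
F=(2\mu+\lambda(\rho))\,\mathrm{div}\,u-(P-\bar P)
\]
all carry over verbatim. Crucially, on $\Omega=[0,1]^2$ one has the distributional identity $\Delta F=\mathrm{div}(\rho\dot u)$ with no boundary contribution, so the pointwise representation of $F$ via conformal mapping and the pull-back Green function developed for Case 1 is not needed: standard Calder\'on--Zygmund estimates on the torus directly yield $\|F\|_{L^p}\leq C\|\rho\dot u\|_{W^{-1,p}}$. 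Feeding this into the Zlotnik-type logarithmic inequality applied to the ODE satisfied by $\rho$ along particle trajectories then gives the uniform upper bound \eqref{S111}.

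For the decay \eqref{S112}, the strategy outlined in the abstract applies in an even cleaner form here: the uniform $L^p$-estimates on $\rho$ first force $\|\rho-\bar\rho_0\|_{L^p}\to 0$ as $t\to\infty$, so after some finite time $T_0$ the solution lies in a small perturbation regime around the constant steady state $(\bar\rho_0,0)$. In this regime the dissipation from the momentum equation controls $\|\nabla u\|_{L^2}$, and a Gronwall-type argument on a suitably weighted energy functional (combining $\|\rho-\bar\rho_0\|_{L^2}^2$ and $\int\rho|u|^2$) produces an exponential rate $e^{-\xi t}$ in $L^2$; the corresponding $L^p$-bound for any $p\in[1,\infty)$ is then obtained by interpolation with the uniform $L^\infty$-bound \eqref{S111}, and $\|\nabla u\|_{L^p}$ is recovered from the momentum equation and elliptic regularity.

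The main obstacle identified in Theorem \ref{T11}---handling the boundary contributions to the effective viscous flux through a conformal change of variables and the pull-back Green function, together with the cancellation provided by the Navier-slip condition---simply does not arise here, so Theorem \ref{T12} is a genuinely simpler assertion and its proof is essentially a stripped-down version of that of Theorem \ref{T11}. For this reason I would not reproduce the full argument but only indicate the few points where the estimates need to be transcribed to the periodic setting.
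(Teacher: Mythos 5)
Your outline gets the easy observations right (the steady state is the constant $\bar\rho_0$, the existence part can be quoted from Huang--Li, no boundary terms arise), but it misses the one genuinely new difficulty of the periodic case, and this is precisely what the paper's Appendix I is built around. On the torus the inequality $\|u\|_{L^p}\le C\|\nabla u\|_{L^2}$ is \emph{false}: only the mean-free version \eqref{C101} holds, and with vacuum the dissipation alone gives no $L^p$ control of $u$. Hence the estimates of Theorem \ref{T11} that you claim ``carry over verbatim'' do not: the $L^p$ density estimates of Proposition \ref{P31} (where $\|u\|_{L^p}\le C\|\nabla u\|_{L^2}$ is used repeatedly), the convergence step \eqref{CC316}, and above all the extra integrability $\sup_t\int_\Omega\rho|u|^{2+\nu}dx$ of Proposition \ref{L41} all break down. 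The paper's fix is the conservation of total momentum \eqref{C501}: $\overline{\rho u}$ is constant in time, \eqref{C502} controls $\|u-\overline{\rho u}\|_{L^p}$ by $\|\nabla u\|_{L^2}$, and one systematically replaces $u$ by $u-\overline{\rho u}$ (the extra integrability becomes $\sup_t\int_\Omega\rho|u-\overline{\rho u}|^{2+\nu}dx\le C$, \eqref{C523}), at the price of estimating separately the new terms generated by $\overline{\rho u}$, e.g. $\Delta^{-1}\mathrm{div}(\rho\,\overline{\rho u})$ and $[u_i,R_{ij}](\rho\,\overline{\rho u})$ in \eqref{C506}--\eqref{C512} and in Proposition \ref{P54}. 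None of this appears in your proposal. Relatedly, on the torus the convection enters the flux decomposition through the commutator $[u_i,R_{ij}](\rho u_j)$, and what Zlotnik's lemma needs is a pointwise-in-$x$ bound along particle paths obtained from the singular-integral representation \eqref{C525} combined with the $2+\nu$ integrability (the periodic analogue of Lemma \ref{L48}); a bound of the form $\|F\|_{L^p}\le C\|\rho\dot u\|_{W^{-1,p}}$ is far too weak to yield \eqref{S111}.

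A second concrete error is in your decay step: you propose a Gronwall argument on a functional combining $\|\rho-\bar\rho_0\|_{L^2}^2$ with $\int_\Omega\rho|u|^2dx$. Since $\int_\Omega\rho u\,dx=\int_\Omega\rho_0u_0\,dx$ is conserved and need not vanish, the kinetic energy does \emph{not} tend to zero; the solution relaxes to a uniformly translating state, which is exactly why \eqref{S112} only asserts decay of $\nabla u$ and $\rho-\rho_s$. Any exponentially decaying functional must be built from $\rho-\bar\rho$ and $u-\overline{\rho u}$ (equivalently $\nabla u$), as in the paper's use of \eqref{C502} when transplanting Propositions \ref{L35}--\ref{L44} and the arguments of Section \ref{SS42} to the periodic setting. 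So while the boundary machinery of Theorem \ref{T11} indeed disappears, the periodic proof is not a mere ``stripped-down version'': the conservation-of-momentum device and the commutator estimates are indispensable, and without them both the uniform bound \eqref{S111} and the decay \eqref{S112} are left unproved in your sketch.
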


\begin{remark}
Theorem \ref{T11} establishes the large time behavior of the strong solution obtained in \cite{FLL}. It indicates that the density $\rho$ is uniformly bounded and the solution $(\rho,u)$ converges to the steady state $(\rho_s,0)$ in $L^p$ sense with exponential rate.
\end{remark}

\begin{remark}
Theorem \ref{T11} requires no limitation on the size  of both initial data and external force. It is a result concerning the large time behavior of compressible Navier-Stokes equations under the large data.
\end{remark}

\begin{remark}
Our method of proving Theorem \ref{T11} relies heavily on the simply connectedness of the domain. For general multi-connected domain, related problems will be left for the future.
\end{remark}

\begin{remark}
Theorem \ref{T12} only investigates the periodic case without external force. The major advantage is that the total momentum $\overline{\rho u}$ and the steady state $(\rho_s,0)=(\bar{\rho},0)$ remain constant. The general case with large external force $\nabla f$ is left for future.
\end{remark}

\begin{remark}
Compared with Huang-Li \cite{HL}, the restriction of $\beta$ and $\gamma$ in Theorem \ref{T12} is further released from $\beta>\frac{3}{2},\ 1<\gamma<4\beta-3$ to $\beta>\frac{4}{3},\ \gamma>1$. As indicated by \cite{Fri}, $\beta>\frac{4}{3}$ seems to be the optimal index one may expect up to now even for the existence result on the periodic domain.
\end{remark}

We now make some comments on the analysis of the paper. In order to derive the large time behavior, we must make all estimates involved independent with time $t$.

As indicated by our previous paper \cite{FLL}, the upper bound of the density still lies in the central position of the whole argument, which is closely related with the point-wise control of the effective viscous flux $G$, with
$$G=(2\mu+\lambda)\mathrm{div}u-(P-P_s).$$
However, the key ingredient to handle $G$ is the extra integrability of velocity, say, we need to find some small constant $\nu>0,$ such that
\begin{equation}\label{S113}
\sup_{0\leq t\leq T}\int_\Omega\rho|u|^{2+\nu}dx\leq C.
\end{equation}
Such assertion relies heavily on the $L^p$ estimates of the density $\rho$.

1. Thus, the central technical issue is to obtain the uniform $L^p$ estimates of $\rho$, which improves the previous result due to Vaigant-Kazhikov \cite{vaigant1995}.
 Precisely, for $\theta(\rho)=2\mu\log\rho+\frac{1}{\beta}\rho^\beta$ and $F=(2\mu+\lambda)\mathrm{div}u-P$, let us rewrite  \eqref{S101}$_1$ as
\begin{equation*}
\frac{d}{dt}\theta(\rho)+\rho^\gamma=-F.
\end{equation*}
Roughly speaking, $\rho^\gamma$ can be viewed as a damping term, which cancels out the effect of external force and boundary, thereby guarantees the uniform $L^p$ estimates of the density, see Lemma \ref{L32}. It should be noted that Vaigant-Kazhikov \cite{vaigant1995} dropped out this key damping term without making full use of it.


Moreover, since the domain under consideration is bounded, we only need to consider the larger part of the density, thus we introduce a truncation function $(\rho-M)_+$ for some properly determined $M$ to overcome technical problems in our arguments. These rough estimates of the density are indispensable in further arguments, see Subsections \ref{SS32} and \ref{SEC4}.

2. The second problem arises from the large external force, and we apply the ideas in \cite{HLX} to handle it. Briefly speaking, by multiplying \eqref{S101}$_2$ by $\rho_s^{-1}\mathcal{B}(\rho-\rho_s)$, we can expand the pressure term $P$ into the Taylor series of order 2 (one more order compared with multiplying $\mathcal{B}(\rho-\rho_s)$), which transforms the large external force into the small one, once we show 
$$\lim_{t\rightarrow\infty}\|\rho(\cdot, t)-\rho_s\|_{L^p(\Omega)}=0,$$ 
by the same method as \cite{HLX}. These arguments lead to the global integrability of $\|\rho-\rho_s\|_{L^2}$ as well, which gives a crude description of the decay rate of the density $\rho$.
\begin{equation}\notag\label{S114}
\int_0^T\int_\Omega(\rho+1)^{\gamma-1}(\rho-\rho_s)^2dxdt\leq C.
\end{equation}
We mention that, $\gamma>1$ provides a crucial extra order in our calculation, see \eqref{S338}.

3. The last obstacle comes from the boundary term, which will be overcame by our previous arguments \cite[Lemmas 12 \& 13]{FLL}. By virtue of the pull back Green function $\widetilde{N}(x,y)$ (see Lemma \ref{L46}), we write out $G$ explicitly and carefully study the singularity of the representation to properly bound $\|G\|_{L^\infty}$, see \eqref{S440}. Note that compared to \cite{FLL}, to get uniform estimates, we require some detailed calculations about $\|\nabla u\|_{L^{p}}$ (see Proposition \ref{L43}), which improves our previous estimate about it in \cite[Lemma 11]{FLL}.

Moreover, to handle the boundary term in lower order estimates, especially, the trace of  $\nabla u$ on $\partial \Omega$, we will apply the ideas due to Cai-Li\cite{caili01}. Observe that, on the one hand, the slip boundary condition $u\cdot n\vert_{\partial \Omega}=0$ gives
\begin{equation*}
  u=(u\cdot n^\bot)n^\bot, \,\, (u\cdot\nabla)u\cdot n=-(u\cdot\nabla)n\cdot u,
\end{equation*}
where $n^\bot$ is given by \eqref{S106}. On the other hand, as shown by  Cai-Li\cite{caili01}, for smooth  functions $g$ and $h$, \eqref{CCC401} ensures that
\begin{equation*}
\begin{split}
\int_{\partial\Omega} n^\bot\cdot\bigg(\nabla g\cdot h \bigg)dS
=\int_{\Omega}\nabla^\bot g\cdot \nabla h\,dx\leq C\|g\|_{H^1}\|h\|_{H^1}.
\end{split}
\end{equation*}

4. For periodic domain $[0,1]\times[0,1]$, the outline of arguments remains the same as Navier-slip boundary condition. However, our method seems not valid directly for the general large external force. The problem is lack of the integrability for $\|u\|_{L^p}$ with respect to $t$, since
$$\|u\|_{L^p}\leq C(p)\|\nabla u\|_{L^2}$$
is \textbf{not} true for the periodic domain. According to Poincar\'{e}'s inequality, we must module out the integral average to ensure that
\begin{align}\label{C101}
\|u-\bar{u}\|_{L^p}\leq C(p)\|\nabla u\|_{L^2}.
\end{align}
For present case, the total momentum $\overline{\rho u}$ is a good alternative for $\bar{u}$, since according to the conservation of momentum, we have
\begin{align}\label{C102}
\frac{d}{dt}\int_\Omega\rho u\,dx=0,
\end{align}
which is obtained by integrating \eqref{S101}$_2$ on $\Omega$ and make sure $\overline{\rho u}$ is a constant.

We mention that \eqref{C102} is sufficient for the periodic case without external force, see Appendix I. However, for general large external force, \eqref{C102} transforms into
\begin{align}\label{C103}
\frac{d}{dt}\int_\Omega\rho u\,dx=\int(\rho-\rho_s)\nabla f\,dx.
\end{align}
It seems hard to extract useful information from \eqref{C103} on decay rate of $\overline{\rho u}$ directly, which is the major obstacle for the periodic case containing the large external force.

The rest of the paper is organized as follows: Section \ref{SEC2} recalls some elementary properties and lemmas; then in Section \ref{SEC3}, we derive the crucial uniform $L^p$ estimates of the density $\rho$ and give two applications of it; with all preparations done, we finish our main  Theorem \ref{T11} via two steps in Section \ref{S5}.

We only treat the Navier-slip boundary conditions in the main text. Necessary modifications for the periodic domain and the proof of Theorem \ref{T12} will be sketched in Appendix I.

\section{Preliminary}\label{SEC2}
\quad In Subsection \ref{SS21}, we give some basic lemmas which will be used frequently. Then in Subsection \ref{SS22}, we state the existence and uniqueness assertion for the system \eqref{S101}--\eqref{S105} due to \cite{FLL} and make a short discussion about the steady state.
\subsection{Technical issue}\label{SS21}

\quad We first show a modified version of the classical Poincar\'{e}-Sobolev inequality.
\begin{lemma}[\cite{tal1}]\label{L22}
 There exists a positive constant $C$ depending only on $\Omega$ such that every function $u\in H^1(\Omega)$ satisfies, for $2<p<\infty$,
\begin{equation}
\|u\|_{L^p}\leq Cp^{1/2}\|u\|^{2/p}_{L^2}\|u\|^{1-2/p}_{H^1}. \label{S202}
\end{equation}
In particular, $\|u\|_{H^1}$ can be replaced by $\|\nabla u\|_{L^2}$ provided
$$ u\cdot n|_{\partial\Omega}=0 \mbox{ or }\int_\Omega udx=0.$$
\end{lemma}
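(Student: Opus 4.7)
This is a sharp two-dimensional Poincar\'e--Sobolev inequality whose only delicate feature is the explicit $p^{1/2}$ growth of the embedding constant; once this is obtained, the ``in particular'' clause is immediate from Poincar\'e--Friedrichs (when $u\cdot n|_{\partial\Omega}=0$) or Poincar\'e--Wirtinger (when $\int_\Omega u=0$), both of which give $\|u\|_{L^2}\le C\|\nabla u\|_{L^2}$ and hence $\|u\|_{H^1}\le C\|\nabla u\|_{L^2}$. My first move would be to reduce to the whole plane: using the $C^{2,1}$-regularity of $\partial\Omega$, I construct a bounded extension operator $E\colon H^1(\Omega)\to H^1(\mathbb R^2)$ whose range is supported in a ball $B_R\supset\overline\Omega$, with $\|Eu\|_{H^1(B_R)}\le C_\Omega\|u\|_{H^1(\Omega)}$. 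It then suffices to prove the inequality for $\tilde u=Eu$ on $B_R$, where the boundary plays no role and all constants absorb $C_\Omega$.

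The heart of the argument is the borderline Trudinger--Moser embedding in two dimensions: there exist constants $\alpha,C_0>0$ depending only on $B_R$ such that
\[
\int_{B_R}\exp\!\left(\alpha\,\frac{|\tilde u|^2}{\|\tilde u\|_{H^1}^2}\right)dx\le C_0.
\]
Taylor-expanding the exponential and integrating term by term yields $\alpha^k\|\tilde u\|_{L^{2k}}^{2k}/(k!\,\|\tilde u\|_{H^1}^{2k})\le C_0$ for every positive integer $k$, and Stirling's formula converts this into $\|\tilde u\|_{L^{2k}}\le C\sqrt k\,\|\tilde u\|_{H^1}$, with $C$ independent of $k$. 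Log-convexity of $L^p$ norms interpolated between consecutive even integers propagates the $\sqrt p$ constant to all real $p\in[2,\infty)$ and already yields the weaker form $\|u\|_{L^p}\le Cp^{1/2}\|u\|_{H^1}$.

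To introduce the interpolation factor $\|\tilde u\|_{L^2}^{2/p}$ without spoiling the $\sqrt p$ constant, I would pass through the layer-cake representation
\[
\int_{B_R}|\tilde u|^p\,dx=p\int_0^\infty t^{p-1}\bigl|\{|\tilde u|>t\}\bigr|dt,
\]
split the integral at a threshold $T$, bound small levels by Chebyshev $|\{|\tilde u|>t\}|\le\|\tilde u\|_{L^2}^2/t^2$, and bound large levels by the Gaussian-type decay $|\{|\tilde u|>t\}|\le C\exp(-\alpha t^2/\|\tilde u\|_{H^1}^2)$ provided by Trudinger--Moser. Choosing $T$ to balance the two regimes and applying Stirling to the resulting incomplete $\Gamma$-integral in the tail gives both the exponent $2/p$ on the $L^2$ factor and the $p^{1/2}$ prefactor simultaneously, which is \eqref{S202}.

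The main obstacle is precisely this last step: a naive interpolation between $L^2$ and $L^{2k}$ either degrades the $\sqrt p$ constant or produces the wrong exponent on $\|u\|_{L^2}$, and so one must use the $L^2$ information only where Chebyshev is strong (small levels) and the $H^1$ information only where Trudinger's exponential decay dominates (large levels). With the two regimes correctly balanced, the sharp form \eqref{S202} follows, and the variant with $\|\nabla u\|_{L^2}$ in place of $\|u\|_{H^1}$ is then automatic from Poincar\'e.
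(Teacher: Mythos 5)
The paper does not prove this lemma at all (it is quoted from Talenti's paper \cite{tal1}), so your argument has to stand on its own; and while several of its ingredients are sound, the decisive step has a genuine gap. Your extension to a ball, the Trudinger--Moser inequality, the Taylor/Stirling argument giving $\|\tilde u\|_{L^p}\le Cp^{1/2}\|\tilde u\|_{H^1}$, and the ``in particular'' clause via Poincar\'e are all fine. What fails is the layer-cake balancing that is supposed to produce the factor $\|u\|_{L^2}^{2/p}$ without losing the $p^{1/2}$ constant. Normalize $\|\tilde u\|_{H^1}=1$ and set $m=\|\tilde u\|_{L^2}$; the only information your two regime bounds retain is $|\{|\tilde u|>t\}|\le\min\{m^2t^{-2},\,C_0e^{-\alpha t^2}\}$. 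To make the tail term $p\,C_0\int_T^\infty t^{p-1}e^{-\alpha t^2}\,dt$ smaller than the target $C^pp^{p/2}m^2$ you are forced to take $T\gtrsim\big(\tfrac{2}{\alpha}\ln(1/m)\big)^{1/2}$, but then the Chebyshev part contributes at least $c\,m^2\big(\ln(1/m)\big)^{(p-2)/2}$, which exceeds $C^pp^{p/2}m^2$ for every fixed $p>2$ and fixed $C$ once $m$ is small enough. So no choice of the threshold $T$ closes the argument: the pair of distributional bounds is genuinely too weak, and the best this scheme can deliver is a Brezis--Gallouet-type estimate with an extra factor $\big(\log(e+\|u\|_{H^1}/\|u\|_{L^2})\big)^{(p-2)/(2p)}$, not \eqref{S202}.

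The standard way to repair it, keeping your first two steps, is to exploit scaling rather than the layer-cake split: extend $u$ to a compactly supported $\tilde u\in H^1(\mathbb{R}^2)$, prove the additive bound $\|v\|_{L^p(\mathbb{R}^2)}\le Cp^{1/2}\|v\|_{H^1(\mathbb{R}^2)}$ exactly as you did (Trudinger plus Stirling), and then apply it to $v(x)=\tilde u(\lambda x)$, for which $\|\nabla v\|_{L^2}=\|\nabla\tilde u\|_{L^2}$, $\|v\|_{L^2}=\lambda^{-1}\|\tilde u\|_{L^2}$ and $\|v\|_{L^p}=\lambda^{-2/p}\|\tilde u\|_{L^p}$; optimizing in $\lambda$ gives $\|\tilde u\|_{L^p}\le Cp^{1/2}\|\tilde u\|_{L^2}^{2/p}\|\nabla\tilde u\|_{L^2}^{1-2/p}$ with the $p^{1/2}$ growth intact, and restricting to $\Omega$ and using the boundedness of the extension in both $L^2$ and $H^1$ yields \eqref{S202}. (Alternatively one can simply invoke Talenti's or Ladyzhenskaya-type estimates with explicit constants, which is what the paper does by citation.) Also note a minor point: your small-level bound carries a factor $p/(p-2)$ which degenerates as $p\to 2^{+}$; this range should be covered separately by log-convexity between $L^2$ and $L^4$.
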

Next, we list some elliptic estimates of Cauchy-Riemann equations (see \cite{adn} for more details).
\begin{lemma}\label{L23}
Suppose that the pair $(F,\omega)$ satisfies
\begin{equation*}
\left\{ \begin{array}{l}
\nabla F+\nabla^\bot\omega= f \quad\quad\quad\,\, \textit{in $\Omega$},\\
\int_\Omega Fdx=0,\ \quad\omega=g \quad
\textit{on $\partial\Omega$}.
       \end{array} \right. \\
\end{equation*}
Then we declare that:

1) If $f\in L^p(\Omega)$ and $g=0$, then we can find some constant $C(p)$ depending only on $p$ and $\Omega$ such that
\begin{equation}\label{S203}
\|\nabla F\|_{L^p}+\|\nabla\omega\|_{L^p}
\leq C(p)\|f\|_{L^p}.
\end{equation}

2) If $u\in W^{1,p}(\Omega)$, $f=0$, and
$g=u\big|_{\partial\Omega}$, then we can find some constant $C(p)$ depending only on $p$ and $\Omega$ such that
\begin{equation}\label{S205}
\| F\|_{W^{1,p}}+\| \omega\|_{W^{1,p}}
\leq C(p)\|u\|_{W^{1,p}}.
\end{equation}

3) If $u\cdot v\in L^p(\Omega)$, $f=\mathrm{div}(u\otimes v)=\partial_i(u_i\cdot v_j)$ with $u\cdot n=0$ on $\partial\Omega$,  and $g=0$, then we can find some constant $C(p)$ depending only on $p$ and $\Omega$ such that
\begin{equation}\label{S204}
\| F\|_{L^p}+\| \omega\|_{L^p}
\leq C(p)\|u\cdot v\|_{L^p}.
\end{equation}
\end{lemma}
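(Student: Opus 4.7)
The plan is to decouple the first-order Cauchy--Riemann system into two scalar elliptic problems. Applying $\mathrm{div}$ to $\nabla F+\nabla^\bot\omega=f$ kills the second term (since $\mathrm{div}\,\nabla^\bot\equiv 0$) and yields $\Delta F=\mathrm{div}\,f$, while applying $\mathrm{rot}$ gives $\Delta\omega=\mathrm{rot}\,f$. Thus $F$ and $\omega$ can be obtained by solving Poisson-type problems with boundary conditions inherited from the original first-order system.

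The boundary conditions come for free from the equation plus the prescribed data. Whenever $\omega\big|_{\partial\Omega}=0$ (parts 1 and 3), the tangential derivative of $\omega$ vanishes on $\partial\Omega$, so $\nabla\omega$ is parallel to $n$ and $\nabla^\bot\omega\cdot n=0$ there; pairing the equation with $n$ then produces the Neumann condition $\partial_nF=f\cdot n$, which together with the zero-mean normalization gives a well-posed Neumann problem for $F$. In part 2, $\omega$ solves a Dirichlet problem with boundary value $g=u\big|_{\partial\Omega}$, and $F$ then inherits a Neumann condition determined by $\omega$.

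Part 1, with $f\in L^p$, follows from standard $L^p$-theory for the Neumann (respectively Dirichlet) problem interpreted in the weak sense, so that $\mathrm{div}\,f$ and $\mathrm{rot}\,f$ are absorbed as distributional right-hand sides and one obtains the gradient estimate directly in terms of $\|f\|_{L^p}$. Equivalently this is the $L^p$-continuity of the Helmholtz projector on $\Omega$. Part 2, where $f=0$, reduces to harmonic extension with $W^{1-1/p,p}(\partial\Omega)$ data, and the claim follows from the usual trace inequality together with $W^{1,p}$-regularity of harmonic functions.

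The real difficulty is part 3, where $f=\mathrm{div}(u\otimes v)$ is only a distribution and a naive appeal to part 1 would cost a derivative, producing $\|u\otimes v\|_{W^{1,p}}$ on the right. Instead, I would test the system against a smooth field, integrate by parts to shift the divergence onto the test function, and invoke $u\cdot n=0$ to annihilate the boundary integral that appears. The desired $L^p$ bound is then obtained by duality from the $L^{p'}$ estimate of parts 1--2 applied to the dual problem. Verifying that the boundary contribution really vanishes, and checking that the duality pairing is controlled precisely by $\|u\otimes v\|_{L^p}$, is the step where the hypothesis $u\cdot n=0$ is essential and constitutes the main technical obstacle.
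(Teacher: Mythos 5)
For the only assertion the paper actually proves, part 3) (parts 1) and 2) are quoted from Agmon--Douglis--Nirenberg), your plan follows the same route as the paper's Appendix II: a Solonnikov-type duality argument in which one pairs $F$ and $\omega$ with an arbitrary $g\in C_0^\infty(\Omega)$, moves all derivatives --- including the divergence in $\mathrm{div}(u\otimes v)$ --- onto an auxiliary function, and uses $u\cdot n=0$ together with $\omega|_{\partial\Omega}=0$ to annihilate the boundary integrals. Your reductions for 1) and 2) are consistent with the standard theory the paper cites, so the comparison really concerns 3).

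There, however, the step you explicitly defer (``checking that the duality pairing is controlled precisely by $\|u\cdot v\|_{L^p}$'') is where the whole proof lives, and the tool you name for it is not the right one. The estimates of parts 1)--2) control only first derivatives of a dual Cauchy--Riemann pair, whereas the pairing $\int_\Omega F g_1\,dx+\int_\Omega\omega g_2\,dx$ requires a test \emph{field} $w$ with prescribed $\mathrm{div}\,w$ and $\mathrm{rot}\,w$, with $w\cdot n|_{\partial\Omega}=0$, and with $\|\nabla w\|_{L^{p'}}\le C\|g\|_{L^{p'}}$ (here $p'=p/(p-1)$); this is second-order information. The paper obtains it from $W^{2,p'}$ Calder\'on--Zygmund estimates for two scalar auxiliary problems: for $\omega$, the Dirichlet problem $\Delta G_1=g$, $G_1|_{\partial\Omega}=0$, testing with $G_1$ (equivalently with the tangent field $\nabla^\bot G_1$), the boundary terms vanishing because $\omega=G_1=0$ on $\partial\Omega$ and $u\cdot n=0$; for $F$, the Neumann problem $\Delta G_2=g-\bar g$, $\partial_n G_2|_{\partial\Omega}=0$, testing with $\nabla G_2$, which is tangent to $\partial\Omega$ so the term $\int_{\partial\Omega}F\,\nabla G_2\cdot n\,dS$ disappears, and the normalization $\int_\Omega F\,dx=0$ is exactly what permits replacing $g$ by the compatible datum $g-\bar g$. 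With these choices both pairings reduce to $\int_\Omega(u\otimes v):\nabla^2 G_i\,dx\le\|u\cdot v\|_{L^p}\|g\|_{L^{p'}}$, which is the claimed bound. So the skeleton of your argument is correct, but to complete it you must make this construction of the dual test functions explicit and invoke $W^{2,p'}$ regularity for the Dirichlet and Neumann Laplacian (as the paper does, citing Gilbarg--Trudinger), rather than parts 1)--2).
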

We mention that the proofs for \eqref{S203} and \eqref{S205} can be found in \cite[Chapter IV]{adn}. We will provide a proof for assertion 3) in Appendix II.

In order to obtain the proper estimates upon $\nabla u$, we require the div-rot-type control, which can be found in \cite{Aramaki2014Lp,
Mitrea2005Integral,Wahl1992Estimating}.
\begin{lemma}\label{L24}
Let $\Omega$ be a simply connected bounded domain in $\mathbb{R}^2$ with Lipschitz boundary $\partial\Omega$.
Then for $u\in W^{1, p}\ (1<p<\infty)$ and $u\cdot n=0$ on $\partial\Omega$, we can find some constant $C(p)$ determined by $p$ and $\Omega$ such that
\begin{equation}
\|\nabla u\|_{L^p}\leq C(p)(\|\mathrm{div}u\|_{L^p}+\|\mathrm{rot}u\|_{L^p}). \label{S206}
\end{equation}
\end{lemma}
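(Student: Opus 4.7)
The strategy is a Helmholtz-type decomposition: I will construct a vector field $v=\nabla\phi+\nabla^\bot\psi$ that matches $u$ in $\mathrm{div}$, $\mathrm{rot}$, and in the normal trace on $\partial\Omega$, and then invoke the simple connectedness of $\Omega$ to identify $v$ with $u$. This reduces the estimate to standard $L^p$ theory for two scalar Laplace problems, which is where the constant $C(p)$ comes from.

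Concretely, set $f=\mathrm{div}\,u$ and $g=\mathrm{rot}\,u$. Since $u\cdot n=0$ on $\partial\Omega$, the divergence theorem forces $\int_\Omega f\,dx=0$, so one can solve the Neumann problem
\begin{equation*}
\Delta\phi=f\ \text{in}\ \Omega,\quad \partial_n\phi=0\ \text{on}\ \partial\Omega,
\end{equation*}
together with the Dirichlet problem
\begin{equation*}
\Delta\psi=g\ \text{in}\ \Omega,\quad \psi=0\ \text{on}\ \partial\Omega.
\end{equation*}
$L^p$-regularity for the Laplacian yields $\|\nabla^2\phi\|_{L^p}+\|\nabla^2\psi\|_{L^p}\leq C(p)(\|f\|_{L^p}+\|g\|_{L^p})$. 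Define $v=\nabla\phi+\nabla^\bot\psi$; a direct computation using $\mathrm{rot}\,\nabla=0$ and $\mathrm{rot}\,\nabla^\bot\psi=\Delta\psi$ gives $\mathrm{div}\,v=f$ and $\mathrm{rot}\,v=g$. On the boundary one has $v\cdot n=\partial_n\phi+\nabla^\bot\psi\cdot n$; the first term vanishes by the Neumann condition, and the second vanishes since $n^\bot$ is tangent to $\partial\Omega$ and $\nabla^\bot\psi\cdot n=-\nabla\psi\cdot n^\bot$ is just a tangential derivative of $\psi\equiv0$ along $\partial\Omega$.

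The residue $w=u-v$ then satisfies $\mathrm{div}\,w=0$, $\mathrm{rot}\,w=0$, and $w\cdot n=0$ on $\partial\Omega$. Simple connectedness together with $\mathrm{rot}\,w=0$ lets me write $w=\nabla\eta$; then $\Delta\eta=0$ in $\Omega$ with $\partial_n\eta=0$ on $\partial\Omega$ forces $\eta$ constant, whence $w\equiv 0$. Therefore $u=\nabla\phi+\nabla^\bot\psi$, and
\begin{equation*}
\|\nabla u\|_{L^p}\leq \|\nabla^2\phi\|_{L^p}+\|\nabla^2\psi\|_{L^p}\leq C(p)\left(\|\mathrm{div}\,u\|_{L^p}+\|\mathrm{rot}\,u\|_{L^p}\right),
\end{equation*}
which is the desired estimate.

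The main technical obstacle I foresee is the $L^p$ second-derivative bound for the Laplacian on a merely Lipschitz domain: full $W^{2,p}$ regularity for $\Delta$ on Lipschitz $\partial\Omega$ holds only for $p$ in a range determined by the geometry (Jerison--Kenig theory), so my sketch above is clean only for $p$ in that range or for more regular boundaries. In the application to Theorem \ref{T11} this is harmless because $\Omega$ is assumed $C^{2,1}$, but to obtain the estimate for the full Lipschitz generality stated in the lemma one should either restrict $p$ accordingly or invoke the layer-potential / singular-integral arguments of the cited references, which handle the full range $1<p<\infty$ directly.
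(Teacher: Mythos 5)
The paper never proves Lemma \ref{L24}: it is quoted directly from the div-curl literature (Aramaki; Mitrea; von Wahl), so your self-contained Helmholtz argument is a genuinely different route rather than a reproduction. Your construction is sound: the compatibility $\int_\Omega \mathrm{div}\,u\,dx=0$ follows from $u\cdot n=0$, the two scalar problems give $\mathrm{div}\,v=\mathrm{div}\,u$ and $\mathrm{rot}\,v=\mathrm{rot}\,u$, the boundary check $v\cdot n=0$ is correct because $\nabla^\bot\psi\cdot n$ is a tangential derivative of $\psi\equiv 0$, and simple connectedness plus uniqueness for the homogeneous Neumann problem identifies $u$ with $\nabla\phi+\nabla^\bot\psi$, yielding \eqref{S206} from $L^p$ elliptic regularity. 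What this buys is transparency: the constant $C(p)$ is traced to Calder\'on--Zygmund estimates for two Laplace problems, which is all the paper actually needs since the domain in Theorem \ref{T11} is $C^{2,1}$. What it does not deliver is the lemma in the stated Lipschitz generality for every $1<p<\infty$, and you correctly identify this yourself: $W^{2,p}$ regularity on merely Lipschitz domains holds only in a restricted range of $p$, and the full nonsmooth case is exactly what the cited layer-potential/singular-integral references are for. Two minor points you could tighten: in the final identification step, when $p<2$ you cannot test the equation for $\eta$ with $\eta$ itself, so you should instead observe that $w=u-v$ is componentwise harmonic (being div- and curl-free), hence smooth in $\Omega$, and appeal to uniqueness for the $L^p$ Neumann problem; and the representation $w=\nabla\eta$ on a simply connected domain should be stated at the distributional level for $L^p$ fields, which is standard but worth a word.
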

In the spirit of Lemma \ref{L24}, we can extend \eqref{S206} to a more general form.
\begin{corollary}\label{L42}
Under the settings of Lemma \ref{L24}, there exist positive constants $\tilde \nu$ and $  C$  both depending only on $\Omega$ such that, for any $\nu\in (0,\tilde \nu)$,
\begin{equation}\label{nau}
\int_\Omega|u|^{\nu}|\nabla u|^2dx\leq C\int_\Omega|u|^{\nu}\bigg((\mathrm{div}u)^2+\omega^2\bigg)dx.
\end{equation}
\end{corollary}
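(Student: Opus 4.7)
The strategy is to reduce the weighted estimate to the unweighted div-rot bound of Lemma \ref{L24} by applying it to a suitable power of the velocity. Specifically, set
\[
w\triangleq |u|^{\nu/2}u.
\]
Then $w\cdot n=|u|^{\nu/2}(u\cdot n)=0$ on $\partial\Omega$, so Lemma \ref{L24} yields (with $p=2$)
\[
\int_\Omega|\nabla w|^2\,dx\le C\int_\Omega\bigl((\mathrm{div} w)^2+(\mathrm{rot} w)^2\bigr)dx.
\]
To make the chain rule rigorous near the zero set of $u$, I would first replace $|u|$ by the smooth approximation $(|u|^2+\varepsilon)^{1/2}$, obtain the estimate uniformly in $\varepsilon$, and then send $\varepsilon\to 0^+$ by Fatou's lemma on the left and dominated convergence on the right (the integrands being pointwise bounded by $|u|^\nu|\nabla u|^2$).

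The bulk of the work is computational. Writing $\partial_i(|u|^{\nu/2})=\tfrac{\nu}{2}|u|^{\nu/2-2}u_k\partial_i u_k$ and expanding,
\[
\partial_i w_j=|u|^{\nu/2}\partial_i u_j+\tfrac{\nu}{2}|u|^{\nu/2-2}u_j u_k\partial_i u_k,
\]
so that a direct calculation using $\sum_j u_j^2=|u|^2$ gives
\[
|\nabla w|^2=|u|^\nu|\nabla u|^2+\bigl(\nu+\tfrac{\nu^2}{4}\bigr)|u|^{\nu-2}\sum_i(u_k\partial_i u_k)^2\ge |u|^\nu|\nabla u|^2.
\]
On the other hand, expanding $\mathrm{div} w=|u|^{\nu/2}\mathrm{div} u+\tfrac{\nu}{2}|u|^{\nu/2-2}u_ju_k\partial_j u_k$ and $\mathrm{rot} w=|u|^{\nu/2}\omega+(\partial_2|u|^{\nu/2})u_1-(\partial_1|u|^{\nu/2})u_2$, and applying Cauchy-Schwarz to the remainders (each of which is pointwise controlled by $\tfrac{\nu}{2}|u|^{\nu/2}|\nabla u|$) yields
\[
(\mathrm{div} w)^2+(\mathrm{rot} w)^2\le 2|u|^\nu\bigl((\mathrm{div} u)^2+\omega^2\bigr)+C\nu^2|u|^\nu|\nabla u|^2.
\]

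Combining the two displays,
\[
\int_\Omega|u|^\nu|\nabla u|^2\,dx\le 2C\int_\Omega|u|^\nu\bigl((\mathrm{div} u)^2+\omega^2\bigr)dx+C'\nu^2\int_\Omega|u|^\nu|\nabla u|^2\,dx,
\]
and choosing $\tilde\nu>0$ small enough that $C'\tilde\nu^2<\tfrac{1}{2}$, one absorbs the last term into the left-hand side and obtains \eqref{nau} for every $\nu\in(0,\tilde\nu)$. The only genuinely delicate point is the a priori regularity required to justify differentiating $|u|^{\nu/2}$ pointwise; this is precisely what the $\varepsilon$-regularization handles, and the uniform constants in the computation above carry over to the limit without change.
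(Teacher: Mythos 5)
Your proposal is correct and follows essentially the same route as the paper: apply Lemma \ref{L24} with $p=2$ to $w=|u|^{\nu/2}u$ (which satisfies $w\cdot n=0$ on $\partial\Omega$), expand $\nabla w$, $\mathrm{div}\,w$, $\mathrm{rot}\,w$ by the chain rule, and absorb the $O(\nu^2)\int|u|^\nu|\nabla u|^2$ remainder for $\nu$ small. Your only deviations are cosmetic refinements — the exact pointwise identity showing $|\nabla w|^2\ge|u|^\nu|\nabla u|^2$ (the paper instead absorbs a $\nu^2$ term pointwise for $\nu<1$) and the $\varepsilon$-regularization of $|u|$, which the paper omits.
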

\begin{proof} First, direct computation  shows
\begin{equation*}
2|u|^\nu |\nabla u|^2\le 4|\nabla(|u|^{\nu/2}u)|^2+\nu^2|u|^\nu|\nabla u|^2.
\end{equation*}
By setting $\nu<1$, it implies that
\begin{equation}\label{S409}
|u|^\nu|\nabla u|^2\le 4|\nabla(|u|^{\nu/2}u)|^2.
\end{equation}

Next, let us calculate that
\begin{equation*}\ba
(\mathrm{div}(| u| ^{\nu/2} u))^2&=\big(| u| ^{\nu/2} \mathrm{div} u+\nabla | u| ^{\nu/2}\cdot u\big)^2\\&
\leq C| u| ^\nu \big(\mathrm{div} u\big)^2+ C\nu^2| u| ^\nu| \nabla u| ^2 .\ea
\end{equation*}
Similarly, we have
\begin{equation}\label{S411}
\big(\mathrm{rot}(| u| ^{\nu/2} u)\big)^2
\leq C| u| ^\nu \big(\mathrm{rot} u\big)^2+C\nu^2| u| ^\nu| \nabla u| ^2.
\end{equation}

Finally, since $\left.| u| ^{\nu/2}u\cdot n\right| _{\partial\Omega}=0$, we apply \eqref{S206} to deduce
\begin{equation*}
\int_\Omega| \nabla (| u|^{\nu/2}u)| ^2dx\leq
C\int_\Omega\left(| \mathrm{div}(| u| ^{\nu/2}u)| ^2 +| \mathrm{rot}(| u| ^{\nu/2}u)| ^2\right)dx,
\end{equation*}
which together with  \eqref{S409}--\eqref{S411}  shows \eqref{nau} after choosing $\nu<1$ suitably small.
\end{proof}

For bounded domain $\Omega$, we introduce the following ``inversion" operator of $\mathrm{div}$, which can be found in \cite{caili01}.
\begin{lemma}\label{L25}
We define
\begin{equation*}
L^p_0(\Omega)=\left\{f\big|\|f\|_{L^p(\Omega)}<\infty,\int_\Omega fdx=0\right\}.
\end{equation*}
Then, for $1<p<\infty$, there is a bounded linear operator $\mathcal{B}$ given by
\begin{equation*}
\begin{split}
\mathcal{B}:L^{p}_0&\rightarrow W^{1,p}_0,\\
\quad f&\mapsto \mathcal{B}(f),\\
\end{split}
\end{equation*}
such that $u=\mathcal{B}(f)$ is a solution to the  equation below,
\begin{equation}\label{S207}
\begin{cases}
\mathrm{div}u=f&\ \mbox{ in }\Omega,\\
u=0&\ \mbox{ on }\partial\Omega.
\end{cases}
\end{equation}
Moreover, we have following properties:

i). For $1<p<\infty$, there is a constant $C(p)$ depending on $\Omega$ and $p$, such that
\begin{equation*}
\|\mathcal{B}(f)\|_{W^{1,p}}\leq C(p)\|f\|_{L^p}.
\end{equation*}

ii). In particular, when $f=\mathrm{div}g$ with $g\cdot n=0$ on $\partial\Omega$, $\mathcal{B}(f)$ is well defined and satisfies that
\begin{equation*}
\|\mathcal{B}(f)\|_{L^p}\leq C(p)\|g\|_{L^p},
\end{equation*}
and $u=\mathcal{B}(f)$ is a weak solution to \eqref{S207}.
\end{lemma}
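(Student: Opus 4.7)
The plan is to construct $\mathcal{B}$ explicitly via Bogovskii's classical integral formula and then bound it by Calder\'on--Zygmund singular integral theory. First I would treat the model case in which $\Omega$ is star-shaped with respect to some open ball $B \subset \Omega$. Fix a cutoff $\omega \in C_c^\infty(B)$ with $\int_B \omega\, dy = 1$ and set
\[
\mathcal{B}(f)(x) = \int_\Omega f(y)\, \frac{x-y}{|x-y|^2}\int_{|x-y|}^{\infty}\omega\!\left(y+r\frac{x-y}{|x-y|}\right)r\,dr\, dy.
\]
A direct computation shows $\mathrm{div}\,\mathcal{B}(f) = f - \omega\int_\Omega f\,dy$, which agrees with $f$ on $L^p_0$; the geometric support of the kernel ensures that $\mathcal{B}(f)$ vanishes outside $\Omega$, hence the trace on $\partial\Omega$ is zero.

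For the $W^{1,p}$ bound in (i), I would differentiate the formula in $x$ and decompose the resulting kernel into a smooth remainder plus a principal singular part of the form $\text{p.v.}\!\int K(x,y)f(y)\,dy$, where $K$ satisfies the standard Calder\'on--Zygmund size and H\"older conditions. Boundedness on $L^p$ for $1<p<\infty$ then follows from classical singular integral theory, while the smooth remainder is controlled by Young's convolution inequality. To pass from this star-shaped model to a general bounded Lipschitz $\Omega$, I would use a finite covering of $\Omega$ by star-shaped subdomains together with a subordinate partition of unity $\{\eta_i\}$, splitting $f = \sum_i f_i$ in such a way that each piece $f_i$ has zero mean on its subdomain (via suitable transfer terms), applying the local operator on each subdomain, and summing.

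For part (ii), substitute $f=\mathrm{div}\,g$ into the integral formula and integrate by parts in $y$; the boundary contribution vanishes because $g\cdot n =0$ on $\partial\Omega$, leaving a representation of the form $\mathcal{B}(\mathrm{div}\, g)(x)=\int_\Omega K_1(x,y)\cdot g(y)\,dy$ in which $K_1$ is again a kernel of standard Calder\'on--Zygmund type. The estimate $\|\mathcal{B}(\mathrm{div}\,g)\|_{L^p}\le C(p)\|g\|_{L^p}$ then follows from the same singular integral theory, and the fact that $u=\mathcal{B}(\mathrm{div}\,g)$ is a weak solution of \eqref{S207} is obtained by testing against $C^\infty_c$ functions and passing to the limit in the explicit formula.

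The main obstacle I anticipate is verifying the Calder\'on--Zygmund estimates uniformly up to $\partial\Omega$: both the differentiation of the Bogovskii kernel in (i) and the integration by parts in (ii) produce principal singular symbols whose size and H\"older bounds must be checked carefully, and the patching across the partition of unity requires bookkeeping to ensure that the zero-mean correction terms on each subdomain remain $L^p$-bounded with a constant depending only on the Lipschitz geometry of $\Omega$. Once those technicalities are handled, both properties (i) and (ii) are consequences of the standard theory, which is why the statement is merely cited from \cite{caili01}.
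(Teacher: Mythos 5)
Your construction is correct and is precisely the classical Bogovskii argument (explicit kernel on star-shaped domains, Calder\'on--Zygmund estimates for the differentiated kernel, partition of unity for a general Lipschitz domain, and integration by parts against the kernel with $g\cdot n=0$ for part ii)), which is the proof underlying the reference the paper relies on: the paper itself offers no proof of Lemma \ref{L25} and simply quotes it from \cite{caili01}. The only point to keep in mind is the one you already flag, namely that in part ii) the operator must be \emph{defined} on $f=\mathrm{div}\,g$ through the integrated-by-parts representation (by density of smooth $g$ with vanishing normal trace), since $\mathrm{div}\,g$ need not lie in $L^p_0$; with that understood, your outline matches the standard proof.
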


We end this subsection by the next Zlotnik inequality. It plays an important role in deriving the uniform upper bound of the density $\rho$.
\begin{lemma}[\cite{zlt}]\label{L26}
Let the function $y\in W^{1, 1}(0, T)$ satisfies
\begin{equation*}
y'(t)=g(y)+h'(t)\ on\ [0, T], \ y(0)=y^0,
\end{equation*}
with $g\in C(\mathbb{R})$ and $h\in W^{1, 1}(0, T)$.  If $g(\infty)=-\infty$ and
\begin{equation*}
h(t_2)-h(t_1)\leq N_0+N_1(t_2-t_1),
\end{equation*}
for all $0\leq t_1 < t_2\leq T$ with some $N_0 \geq 0$ and $N_1 \geq 0$,  then
\begin{equation*}
y(t)\leq \max{\{y^0, \bar\zeta\}}+ N_0<\infty\ on\ [0, T],
\end{equation*}
where $\bar\zeta$ is a constant such that
\begin{equation*}
g(\zeta)\leq-N_1\ for\ \zeta\geq\bar\zeta.
\end{equation*}
\end{lemma}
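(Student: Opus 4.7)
The plan is to exploit the fact that whenever $y$ rises above $\bar\zeta$ the right-hand side $g(y)$ contributes at most $-N_1$ per unit time, exactly cancelling the linear growth rate $N_1$ that $h$ is permitted by hypothesis; only the additive increment $N_0$ survives. The natural device is therefore to anchor the analysis at the last time $y$ was at or below $\bar\zeta$ (if any) and integrate the ODE forward from there.

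Fix $t \in [0, T]$ and use that the $W^{1,1}$-representative of $y$ is absolutely continuous, so
\begin{equation*}
y(t) - y(t_1) = \int_{t_1}^{t} g(y(s))\, ds + h(t) - h(t_1)
\end{equation*}
for every $t_1 \in [0,t]$. The hypothesis $g(\infty) = -\infty$ guarantees the existence of a $\bar\zeta$ with $g(\zeta) \le -N_1$ for all $\zeta \ge \bar\zeta$. I then split into two cases. Case 1: some $s \in [0, t]$ has $y(s) \le \bar\zeta$. Let $t_1$ be the supremum of such $s$; by continuity $y(t_1) \le \bar\zeta$, and on $(t_1, t]$ one has $y(s) > \bar\zeta$, whence $g(y(s)) \le -N_1$. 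Plugging this and the hypothesis on $h$ into the displayed identity gives
\begin{equation*}
y(t) \le \bar\zeta - N_1(t-t_1) + N_0 + N_1(t-t_1) = \bar\zeta + N_0 \le \max\{y^0, \bar\zeta\} + N_0.
\end{equation*}
Case 2: $y(s) > \bar\zeta$ on all of $[0, t]$. Then in particular $y^0 > \bar\zeta$, so $\max\{y^0, \bar\zeta\} = y^0$, and applying the same integration with $t_1 = 0$ yields $y(t) \le y^0 + N_0$.

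There is no genuine obstacle here; this is essentially a scalar comparison argument. The only mild technical point is verifying that the supremum $t_1$ in Case 1 is actually attained with $y(t_1) \le \bar\zeta$, which reduces to the continuity of $y$ together with the closedness of $\{s : y(s) \le \bar\zeta\}$. The place where this lemma earns its keep in the rest of the paper is not the proof itself but its application: one reads off the uniform upper bound on $\log\rho$ (hence on $\rho$) by matching $g$ with the damping produced by the term $\rho^\gamma$ in the transport equation for $\theta(\rho)$, and matching $h$ with a time-integral involving the effective viscous flux, whose linear-in-$t$ control is precisely the content of the later $L^p$-estimates of $\rho$ and of the effective viscous flux.
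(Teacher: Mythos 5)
Your argument is correct: the anchoring at the last time $y$ crosses $\bar\zeta$, the use of absolute continuity of the $W^{1,1}$ representatives to integrate $y'=g(y)+h'$, and the exact cancellation of $-N_1(t-t_1)$ against the allowed growth $N_0+N_1(t-t_1)$ of $h$ constitute the standard comparison proof of Zlotnik's inequality. The paper itself offers no proof — it simply quotes the lemma from the cited reference — so there is nothing to contrast with; your write-up (including the remark that $\{s:y(s)\leq\bar\zeta\}$ is closed, so the supremum $t_1$ is attained with $y(t_1)\leq\bar\zeta$) is a complete and faithful reconstruction of the classical argument.
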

\subsection{Existence results}\label{SS22}
$\quad$We first quote our previous result in \cite{FLL} about the global existence of the unique strong solution to the initial boundary problem \eqref{S101}--\eqref{S105}.
\begin{lemma} \label{L21}
Under the conditions in Theorem \ref{T11}, the initial boundary problem \eqref{S101}--\eqref{S105} has the unique global strong solution $(\rho,u)$ satisfying for any $0<T<\infty$,
\begin{equation*}
\left\{ \begin{array}{l}
\rho\in C([0,T];W^{1,q}),\rho_t\in L^\infty(0,T;L^2),
\\
u\in L^\infty(0,T;H^1)\cap L^{1+1/q}(0,T;W^{2,q}),\\
\sqrt{t}u\in L^\infty(0,T;H^2)\cap L^{2}(0,T;W^{2,q}),
\sqrt{t}u_t\in L^2(0,T;H^1),\\
\rho u\in C([0,T];L^2),\sqrt{\rho}u_t\in L^2((0,T)\times\Omega).
       \end{array} \right. \\
\end{equation*}
\end{lemma}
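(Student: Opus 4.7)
The statement to be established is the global well-posedness result quoted from \cite{FLL}; my plan is to outline how one would independently derive it by combining a local existence argument with time-independent (in fact, local-in-time would suffice for this lemma) a priori estimates that close at the regularity level stated.

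First, I would construct local-in-time strong solutions by an approximation scheme. Because the bulk viscosity degenerates at vacuum (through $\lambda(\rho)=\rho^\beta$) and $\rho_0$ may vanish, the cleanest route is to perturb the initial density to $\rho_0^\delta=\rho_0+\delta$, solve the resulting non-degenerate system by a standard iterative Banach fixed-point scheme in a space adapted to the regularity class (continuity of $\rho$ in $W^{1,q}$, $u$ in $H^1\cap L^{1+1/q}_tW^{2,q}$), and then pass $\delta\to0$ using compactness together with the structural bounds compatible with $\rho\ge0$. The slip boundary conditions \eqref{S105} are respected at the linearized level by working in $\tilde H^1$ and using the elliptic theory of Lam\'e-type systems with slip data, for which Lemmas \ref{L23} and \ref{L24} provide the relevant $L^p$ control.

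Second, I would derive a priori estimates sufficient to upgrade the local solution to a global one. The chain would be: (i) the basic energy identity, giving $\sqrt{\rho}u\in L^\infty_tL^2$ and $\nabla u\in L^2_{t,x}$; (ii) an estimate for $\rho\dot u$ in $L^2_tL^2$ obtained by testing the momentum equation with $\dot u$, using the identity $\Delta u=\nabla\mathrm{div}\,u-\nabla^\bot\omega$ together with the Navier-slip boundary identity $(u\cdot\nabla)u\cdot n=-(u\cdot\nabla)n\cdot u$ on $\partial\Omega$ to control boundary terms (as recalled in the introduction), with Corollary \ref{L42} absorbing the cross terms; (iii) propagation of the $W^{1,q}$ regularity of $\rho$ using the transport equation $\rho_t+u\cdot\nabla\rho+\rho\,\mathrm{div}\,u=0$ together with $\nabla u\in L^1_tL^\infty$, the latter coming from elliptic estimates on the effective viscous flux $F$ and vorticity $\omega$ via Lemma \ref{L23}; and (iv) the weighted time-regularity bounds involving $\sqrt{t}$ by testing the equation for $\dot u$ with $\dot u$ itself, as in Hoff-type arguments, exploiting \eqref{S205} to handle the boundary traces of $\omega$.

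The main obstacle is, of course, the upper bound $\|\rho\|_{L^\infty}\le C(T)$ on any finite interval, since without it the high-order estimates cannot close. Following the idea already flagged in the introduction, I would rewrite the continuity equation as $(d/dt)\theta(\rho)+\rho^\gamma=-F$ with $\theta(\rho)=2\mu\log\rho+\beta^{-1}\rho^\beta$ and apply Zlotnik's Lemma \ref{L26}; the damping $-\rho^\gamma$ on the right plays the role of $g(\theta)\to-\infty$. To use Zlotnik one needs the integral of $F=(2\mu+\lambda)\mathrm{div}\,u-P$ along particle trajectories to grow at most linearly in $t$, and this is where the bounded-domain geometry bites: the point-wise control of $F$ requires either the conformal mapping / pull-back Green function representation (the route the authors take, and what makes $\beta>4/3$ appear) or, for a local-in-time lemma of the present type, the cruder route of deriving higher integrability $\rho|u|^{2+\nu}\in L^\infty_tL^1$ via Corollary \ref{L42} and then using $L^p$-theory for $F$. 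Once the density is pointwise bounded, uniqueness follows in the standard way by an $L^2$ energy estimate on the difference of two solutions, using the $W^{1,q}$ regularity of $\rho$ to absorb the transport term and the $L^2_tH^1$ bound on $\dot u$ to control the convective term in the momentum equation.
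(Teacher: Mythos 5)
The paper does not actually prove Lemma \ref{L21}: it is quoted from the authors' earlier work \cite{FLL}, the only added content being the remark that the proof given there for $f=0$ carries over verbatim to $f\in H^2$. So the only meaningful comparison is with the strategy of \cite{FLL} as it is summarized in the introduction and re-used in Sections \ref{SEC3}--\ref{S5}, and your outline does track that strategy in broad strokes: local existence by lifting the density off vacuum and passing to the limit, energy and $\rho\dot u$ estimates using the slip-boundary identities, propagation of the $W^{1,q}$ regularity of $\rho$ by the transport equation, a finite-time upper bound of $\rho$ via Zlotnik's inequality applied to $\frac{d}{dt}\theta(\rho)+P=-F$, and uniqueness by an $L^2$ estimate on the difference of two solutions.

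The genuine gap sits exactly at the step you yourself flag as the main obstacle. Your proposed shortcut --- that for a merely local-in-time statement one may bypass the conformal-mapping/pull-back Green function representation and bound the density through ``the cruder route'' of $\sup_t\int_\Omega\rho|u|^{2+\nu}dx\le C$ plus $L^p$-theory for $F$ --- is not substantiated, and it is precisely the hard point of \cite{FLL}: even allowing constants that depend on $T$, applying Zlotnik's lemma requires controlling $\int_{t_1}^{t_2}-F(x(t),t)\,dt$ along particle trajectories, which in a bounded domain is obtained only through a point-wise representation of the effective viscous flux, the singularity analysis of the kernel near $\partial\Omega$, the extra integrability of $\rho|u|^{2+\nu}$, and the interpolation machinery for $\|\nabla u\|_{L^p}$ in terms of the functionals $A$ and $B$; this is where $\beta>4/3$ enters, and restricting to finite time intervals does not remove it. Similarly, your step (iii) invokes $\nabla u\in L^1(0,T;L^\infty)$ to propagate $\rho\in W^{1,q}$, but that bound itself presupposes the $L^\infty$ bound on $\rho$ and the $\sqrt t$-weighted estimates on $\dot u$, so the logical order of your chain needs to be reversed (density bound first, then $W^{2,q}$/logarithmic estimates giving $\nabla u\in L^1(0,T;L^\infty)$, then the $W^{1,q}$ propagation). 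As a roadmap your plan names the right ingredients, but it defers exactly the analysis that makes the lemma a theorem, so it cannot stand as a proof of the quoted statement.
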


Note that we only considered the case of $f=0$ in \cite{FLL}. However, the same proof is valid for   $f\in H^2$ as well, so we omit the complete details here.

In addition, the standard energy estimate involving potential force holds as well.
\begin{proposition}\label{L31}
For the unique global solution $(\rho, u)$ given by Lemma \ref{L21}, there is a constant $C$ depending only on $\gamma$,  $\rho_0$, $u_0$, and $f$ such that
\begin{equation}\label{S301}
\sup_{0\leq t\leq T}\int_\Omega(\rho|u|^2+\rho^\gamma)dx+\int_0^T A_1^2(t)dt\leq C,
\end{equation}
where we define
\begin{equation}\label{SS401}
A_1^2(t)\triangleq  \int_\Omega\left((\mu+\lambda)(\mathrm{div}u)^2+|\nabla u|^2\right)dx.
\end{equation}

\end{proposition}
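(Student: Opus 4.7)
The approach is the classical energy method adapted to slip boundary conditions with a potential external force. I would test the momentum equation $\eqref{S101}_2$ with $u$ and integrate over $\Omega$. Using the continuity equation $\eqref{S101}_1$ the time derivative and convective terms collapse into $\frac{d}{dt}\int \frac12\rho|u|^2\,dx$. The pressure contribution, after integrating by parts (using $u\cdot n=0$), combined with the consequence $(\rho^\gamma)_t+\mathrm{div}(\rho^\gamma u)+(\gamma-1)\rho^\gamma\mathrm{div} u=0$ of \eqref{S101}$_1$, produces $\frac{1}{\gamma-1}\frac{d}{dt}\int\rho^\gamma dx$. For the external force, integration by parts together with the continuity equation and the time-independence of $f$ give
\begin{equation*}
\int \rho\, u\cdot\nabla f\,dx=-\int f\,\mathrm{div}(\rho u)\,dx=\int f\rho_t\,dx=\frac{d}{dt}\int \rho f\,dx.
\end{equation*}

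The core step is the viscous dissipation under slip boundary. Using the 2D identity $\Delta u=\nabla\mathrm{div} u+\nabla^\perp\omega$ and integrating by parts, I obtain
\begin{equation*}
-\int\mu\Delta u\cdot u\,dx=\mu\int\bigl((\mathrm{div} u)^2+\omega^2\bigr)dx-\mu\int_{\partial\Omega}\mathrm{div} u\,(u\cdot n)\,dS-\mu\int_{\partial\Omega}\omega\,(u\cdot n^\perp)\,dS.
\end{equation*}
The first boundary integral vanishes by $u\cdot n|_{\partial\Omega}=0$, while the condition $\omega=-Ku\cdot n^\perp$ on $\partial\Omega$, together with the identity $|u|^2=(u\cdot n^\perp)^2$ on the boundary, turns the second into the non-negative quantity $\mu\int_{\partial\Omega}K|u|^2\,dS$. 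The bulk-viscous term integrates by parts to $\int(\mu+\lambda)(\mathrm{div} u)^2\,dx$ with no boundary contribution thanks again to $u\cdot n=0$. This is precisely the step where the slip boundary condition is essential and is the main technical subtlety of the argument; in the periodic case (Case 2) all boundary integrals disappear trivially.

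Assembling the pieces yields the differential identity
\begin{equation*}
\frac{d}{dt}\int\Bigl(\tfrac{1}{2}\rho|u|^2+\tfrac{1}{\gamma-1}\rho^\gamma-\rho f\Bigr)dx+\mu\int\bigl((\mathrm{div} u)^2+\omega^2\bigr)dx+\int(\mu+\lambda)(\mathrm{div} u)^2 dx+\mu\int_{\partial\Omega}K|u|^2\,dS=0.
\end{equation*}
Dropping the non-negative boundary integral and invoking Lemma \ref{L24} to replace $\int((\mathrm{div} u)^2+\omega^2)dx$ by a constant multiple of $\int|\nabla u|^2\,dx$ reconstitutes the dissipation $A_1^2(t)$ from \eqref{SS401} up to a harmless multiplicative constant.

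Finally, I would integrate in time from $0$ to $T$. The only residual task is to absorb $\int\rho f\,dx$. Mass conservation gives $\|\rho(t)\|_{L^1}=\|\rho_0\|_{L^1}$, and since $f\in H^2\hookrightarrow L^\infty$ in two dimensions, Young's inequality yields $|\int\rho f\,dx|\le\frac{1}{2(\gamma-1)}\int\rho^\gamma dx+C(\gamma,f)\|\rho_0\|_{L^1}$, which is absorbed into the internal-energy term on the left-hand side. The resulting inequality is exactly \eqref{S301}, with $C$ depending only on $\gamma$, $\rho_0$, $u_0$, and $f$.
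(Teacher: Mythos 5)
Your proposal is correct and follows essentially the same route as the paper: multiplying the momentum equation by $u$, using the 2D identity $\Delta u=\nabla\mathrm{div}u+\nabla^\perp\omega$ and the slip condition to turn the boundary contribution into the sign-definite term $\int_{\partial\Omega}K(u\cdot n^\perp)^2dS$, handling the force via $\int\rho u\cdot\nabla f\,dx=\frac{d}{dt}\int\rho f\,dx$, bounding $\int\rho f\,dx$ through mass conservation, and recovering $\|\nabla u\|_{L^2}^2$ from $\mathrm{div}u$ and $\omega$ via Lemma \ref{L24}. The only cosmetic difference is that you absorb $\int\rho f\,dx$ by Young's inequality into the internal energy, whereas the paper bounds it directly by $\|f\|_{L^\infty}\int\rho_0\,dx$; both are valid.
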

\begin{proof}

Note that in dimension 2, we have
$$\Delta u=\nabla \mathrm{div}u+\nabla^\bot\omega,$$
thus $\eqref{S101}_2$ can be rewritten as
\begin{equation}\label{S303}
\rho\dot{u}-\nabla\big((2\mu+\lambda) \mathrm{div}u\big)-\mu\nabla^\bot\omega+\nabla P-\rho\nabla f=0.
\end{equation}

Multiplying \eqref{S303} by $u$ and integrating over $\Omega$ show that
\begin{equation}\label{S304}
\begin{split}
&\frac{d}{dt}\int_\Omega\left(\frac12\rho|u|^2+ \frac{\rho^\gamma}{\gamma-1}-\rho f\right)dx
+\int_\Omega\left((2\mu+\lambda)(\mathrm{div}u)^2+\mu\omega^2\right)dx\\
&= -\int_{\partial\Omega}K(u\cdot n^\bot)^2dS\leq 0,
\end{split}
\end{equation}
where we have used \eqref{S105}, $\eqref{S101}_1$, and the following estimate:
\begin{equation*}
\begin{split}
-\int_\Omega \rho u\cdot\nabla f dx=\int_\Omega\mathrm{div}(\rho u)f dx=-\int_\Omega\frac{\partial}{\partial t}\rho\cdot f dx=-\frac{d}{dt}\int_\Omega\rho f dx
\end{split}
\end{equation*}
due to \eqref{S105} and $\eqref{S101}_1$.

Moreover, by virtue of $\eqref{S101}_1$, it holds that
\begin{equation}\label{S305}
\left|\int_\Omega\rho fdx\right|\leq \|f\|_{L^\infty}\int_\Omega\rho dx=\|f\|_{L^\infty}\int_\Omega\rho_0dx\leq C.
\end{equation}
Thus, integrating \eqref{S304} over $(0, T)$ together with \eqref{S305} and \eqref{S206} yields \eqref{S301}.
\end{proof}

Next, we discuss the steady state of the system \eqref{S101}. For potential external force $\nabla f$, we mention that the steady state of the system \eqref{S101} is given by $(\rho_s, 0)$ with $\rho_s$ satisfying:
\begin{equation}\label{CC201}
\begin{cases}
\nabla \rho_s^\gamma=\rho_s\nabla f,\\
\int_\Omega\rho_s\, dx=\int_\Omega\rho_0\, dx.&
\end{cases}
\end{equation}
The uniqueness of the steady state is given by Matsumura-Padula \cite{1992Stability}:
\begin{lemma}\label{L11}
Let $f\in H^2$, and we assume that
\begin{equation}\label{S109}
\int_\Omega\rho_sdx\geq \int_\Omega\left(\frac{\gamma-1}{\gamma}\bigg(f-\inf_{\overline{\Omega}}f\bigg)\right)^{\frac{1}{\gamma-1}}dx.
\end{equation}
Then the equation \eqref{S107} has a unique solution   $0<\rho_s\in H^2(\Omega)$.
\end{lemma}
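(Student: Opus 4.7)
The plan is to integrate the first equation in \eqref{S107} to reduce it to an algebraic relation in $\rho_s$, and then to determine the resulting integration constant via the mass constraint using a monotonicity argument. First, wherever $\rho_s>0$ one has $\nabla\rho_s^\gamma=\tfrac{\gamma}{\gamma-1}\rho_s\nabla\rho_s^{\gamma-1}$, so the momentum balance $\nabla\rho_s^\gamma=\rho_s\nabla f$ becomes $\nabla\bigl(\tfrac{\gamma}{\gamma-1}\rho_s^{\gamma-1}-f\bigr)=0$. Since $\Omega$ is connected, there is a constant $C$ with
\begin{equation*}
\rho_s^{\gamma-1}=\frac{\gamma-1}{\gamma}(f+C)\quad\text{on }\overline{\Omega},
\end{equation*}
and positivity forces $C>-\inf_{\overline{\Omega}}f$, giving
\begin{equation*}
\rho_s=\left(\frac{\gamma-1}{\gamma}(f+C)\right)^{1/(\gamma-1)}.
\end{equation*}

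Next, I would use the integral constraint in \eqref{S107} to pin down $C$. Define, for $C\geq -\inf_{\overline{\Omega}}f$,
\begin{equation*}
\Phi(C):=\int_\Omega\left(\frac{\gamma-1}{\gamma}(f+C)\right)^{1/(\gamma-1)}dx.
\end{equation*}
The integrand is pointwise continuous and strictly increasing in $C$, so by the dominated convergence theorem $\Phi$ is continuous and strictly increasing on its domain, and $\Phi(C)\to\infty$ as $C\to\infty$. Moreover $\Phi(-\inf_{\overline{\Omega}}f)$ equals exactly the right-hand side of \eqref{S109}. Hence the hypothesis $\int_\Omega\rho_s\,dx\geq \Phi(-\inf_{\overline{\Omega}}f)$ (which, via the mass constraint, is a condition on $\int_\Omega\rho_0\,dx$) guarantees the existence of a unique $C$ with $\Phi(C)=\int_\Omega\rho_0\,dx$, and strict inequality yields $C>-\inf_{\overline{\Omega}}f$, so $\rho_s$ is bounded below by a positive constant.

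Finally, the regularity $\rho_s\in H^2(\Omega)$ follows from the Sobolev embedding $H^2(\Omega)\hookrightarrow C(\overline{\Omega})$ available in two dimensions: since $f+C$ is then continuous and uniformly bounded below by a positive constant, the map $t\mapsto t^{1/(\gamma-1)}$ is smooth on a neighborhood of the range of $f+C$, and the chain rule shows that composition with such a smooth function preserves $H^2$ regularity. The only delicate point is the borderline case of equality in \eqref{S109}, where $C=-\inf_{\overline{\Omega}}f$ and $\rho_s$ may vanish at the minimizers of $f$; there the chain-rule argument degenerates and strict positivity is lost. I would either exclude this case by requiring strict inequality (which is the version \eqref{CC101} actually invoked in Theorem \ref{T11}) or handle it by a truncation-and-limit argument, and this borderline regularity issue is, in my view, the only substantive obstacle in an otherwise elementary proof.
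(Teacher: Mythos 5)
Your argument is sound and is essentially the expected one: the paper does not actually prove Lemma \ref{L11} — it quotes it from Matsumura--Padula \cite{1992Stability} — and the only computation it records is the reduction in Corollary \ref{C21}, namely dividing by $\rho_s$ to get $\nabla\big(\frac{\gamma}{\gamma-1}\rho_s^{\gamma-1}-f\big)=0$ and hence $\rho_s^{\gamma-1}=\frac{\gamma-1}{\gamma}(f+C_0)$, which is exactly the first half of your proof. What you add beyond the paper's text is the determination of the constant via the continuous, strictly increasing mass function $\Phi(C)$ (correct, by dominated convergence and $\Phi(C)\to\infty$), and the $H^2$ regularity of the composition, which indeed works in two dimensions because $H^2(\Omega)\hookrightarrow C(\overline\Omega)$ and $\nabla f\otimes\nabla f\in L^2$ once $f+C$ is bounded below by a positive constant. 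The borderline case of equality in \eqref{S109} that you flag is a genuine delicacy of the statement as quoted: there $C_0=-\inf_{\overline\Omega}f$, strict positivity of $\rho_s$ can fail at minimizers of $f$, and for $\gamma>2$ the map $t\mapsto t^{1/(\gamma-1)}$ is not smooth at $t=0$, so the regularity argument degenerates. This is immaterial for the paper, which only ever invokes the strict inequality \eqref{CC101}, so your resolution — work under strict inequality — is the appropriate one; the vague ``truncation-and-limit'' alternative would need more detail if you actually wanted the equality case.
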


We end this section by a direct corollary giving explicit formula of $\rho_s$.
\begin{corollary}\label{C21}
Under the condition of Lemma \ref{L11}, the solution $\rho_s$ is given by
\begin{equation}\label{CC102}
\rho_s=\bigg(\frac{\gamma-1}{\gamma}(f+C_0)\bigg)^\frac{1}{\gamma-1},
\end{equation}
for some constant $C_0$ determined by the condition \eqref{S109}.
\end{corollary}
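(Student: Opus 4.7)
Proof proposal for Corollary \ref{C21}.

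The plan is to treat \eqref{CC201}$_1$ as an exact first-order equation for $\rho_s$ and explicitly antidifferentiate. First, since Lemma \ref{L11} provides $\rho_s \in H^2(\Omega)$ with $\rho_s > 0$ on $\overline{\Omega}$, the quantity $\rho_s^{\gamma-1}$ is well-defined and in $H^2$, so I can safely divide by $\rho_s$. Rewriting $\nabla \rho_s^\gamma = \gamma \rho_s^{\gamma-1}\nabla \rho_s$ and dividing the steady state equation by $\rho_s$ gives
\begin{equation*}
\gamma \rho_s^{\gamma-2}\nabla \rho_s = \nabla f \quad \text{in } \Omega.
\end{equation*}
Observing that $\gamma \rho_s^{\gamma-2}\nabla \rho_s = \tfrac{\gamma}{\gamma-1}\nabla\bigl(\rho_s^{\gamma-1}\bigr)$, this becomes
\begin{equation*}
\nabla\!\left(\frac{\gamma}{\gamma-1}\rho_s^{\gamma-1} - f\right) = 0 \quad \text{in } \Omega.
\end{equation*}

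Next, since $\Omega$ is connected, this gradient identity forces $\tfrac{\gamma}{\gamma-1}\rho_s^{\gamma-1} - f \equiv C_0$ for some real constant $C_0$. Solving algebraically, using $\gamma>1$ and $\rho_s>0$, yields the formula
\begin{equation*}
\rho_s = \left(\frac{\gamma-1}{\gamma}(f+C_0)\right)^{\frac{1}{\gamma-1}},
\end{equation*}
as claimed. The constant $C_0$ is then fixed by imposing the mass constraint $\int_\Omega \rho_s\,dx = \int_\Omega \rho_0\,dx$ from \eqref{CC201}$_2$; the condition \eqref{S109} guarantees that the corresponding $C_0$ makes $f+C_0 \ge 0$ throughout $\overline{\Omega}$ so that the expression is well defined and non-negative, while Lemma \ref{L11} already rules out any other candidate.

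The only substantive point in this argument is the division by $\rho_s$, which would be illegal at vacuum; however, this is handled for free once we invoke the strict positivity provided by Lemma \ref{L11}. Consequently I do not expect any genuine obstacle beyond carefully citing that positivity, and the corollary follows in a few lines as outlined above.
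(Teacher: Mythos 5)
Your proposal is correct and follows essentially the same route as the paper: multiply (divide) the steady equation by $\rho_s$ using the positivity from Lemma \ref{L11}, recognize the left side as $\nabla\bigl(\tfrac{\gamma}{\gamma-1}\rho_s^{\gamma-1}-f\bigr)=0$, and conclude $\tfrac{\gamma}{\gamma-1}\rho_s^{\gamma-1}=f+C_0$ with $C_0$ fixed by the mass condition. Your added remarks on connectedness and on how the mass constraint pins down $C_0$ are harmless elaborations of the same argument.
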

\begin{proof}
The proof is quite direct. Note that Lemma \ref{L11} ensures that $\rho_s>0$, thus we multiply \eqref{CC201}$_1$ by $\rho_s^{-1}$ and deduce that
$$\nabla\bigg(\frac{\gamma}{\gamma-1}\rho_s^{\gamma-1}-f\bigg)=0.$$

Consequently, we argue that
\begin{equation}\label{CCC101}
\frac{\gamma}{\gamma-1}\rho_s^{\gamma-1}=f+C_0,
\end{equation}
for some $C_0$ determined by \eqref{S109} and thus gives \eqref{CC102}.
\end{proof}

\section{Rough bound of the density}\label{SEC3}
\quad This section is devoted to obtain the crucial uniform $L^p$ estimates of the density $\rho$. It provides a crude restriction on the size of $\rho$ and prevents it growing too fast as $t\rightarrow\infty$, see Lemma \ref{L32} and Corollary \ref{L33}. Then we state two applications of roughly bounded density. One concerns the accurate large time behavior of $\rho$, and another one gives further energy estimates.

In this section, we always assume $(\rho,u)$ is the unique global strong solution of initial boundary value problem \eqref{S101}--\eqref{S105} obtained in Lemma \ref{L21}. The constant $C$ may vary from line to line, but it is independent of $t$.
\subsection{Uniform $L^p$ estimate of density}
\quad We first deduce the uniform $L^p$-norm of density $\rho$.
Let us rewrite  $\eqref{S101}$ as
\begin{equation}\label{S306}
\frac{d}{dt}\theta(\rho)+P(\rho)=-(F-\bar{F})-\bar{F},
\end{equation}
where $\theta(\rho)=2\mu\log\rho+\rho^\beta/\beta$, $F=(2\mu+\lambda)\mathrm{div}u-P$
and $\bar{F}$ is the integral average of $F$. Note that $\int_\Omega(F-\bar{F})dx=0.$

According to the conservation of the momentum \eqref{S101}$_2$, we observe that $F-\bar{F}$ and $\omega=\partial_2u_1-\partial_1u_2$ solve the Cauchy-Riemann equations:
\begin{equation}\label{S307}
\begin{cases}
\nabla (F-\bar{F})+\nabla^\bot\omega=\frac{\partial}{\partial t}(\rho u)+\mathrm{div}(\rho u\otimes u)
-\rho\nabla f& \mbox{ in } \Omega, \\
\int_\Omega (F-\bar{F})dx=0,\ \omega=-Ku\cdot n^\bot& \mbox{ on } \partial\Omega.
\end{cases}
\end{equation}
By virtue of the uniqueness assertion of problem \eqref{S307} (see \cite{adn}), we decompose \eqref{S307} into four parts  $$F-\bar{F}=\sum_{i=1}^4F_i,\
 \omega=\sum_{i=1}^4\omega_i.$$

Precisely, $(F_1,\omega_1)$ deals with the parts of time derivatives, and it is given by
\begin{equation}\label{S309}
\begin{cases}
\nabla F_1+\nabla^\bot\omega_1=\frac{\partial}{\partial t}(\rho u)&\mbox{in }\Omega, \\
\int_\Omega F_1dx=0,\,\omega_1=0&\mbox{on }  \partial\Omega.
\end{cases}
\end{equation}
In particular, if we introduce the related system
\begin{equation}\label{S313}
\begin{cases}
\nabla\tilde{F}_1+\nabla^\bot\tilde{\omega}_1=\rho u
&\mbox{in }\Omega,\\
\int_\Omega \tilde{F}_1dx=0,\,\tilde{\omega}_1=0
&\mbox{on }\partial\Omega.
\end{cases}
\end{equation}
By taking derivative with respect to $t$ in \eqref{S313}, we declare that
$$F_1=\frac{\partial}{\partial t}\tilde{F}_1.$$

Next, $(F_2,\omega_2)$ corresponds to the convective term which lies in the central position of our arguments and satisfies
\begin{equation}\label{S310}
\begin{cases}
\nabla F_2+\nabla^\bot\omega_2=\mathrm{div}(\rho u\otimes u)
&\mbox{in }\Omega,\\
\int_\Omega F_2dx=0,\,\omega_2=0
&\mbox{on }\partial\Omega.
\end{cases}
\end{equation}
According to the elliptic estimates \eqref{S204}, we can roughly identify that
$$F_2\sim\rho u\otimes u.$$

Then, $(F_3,\omega_3)$ describes the effect of external force, which is solved by
\begin{equation}\label{S311}
\begin{cases}
\nabla F_3+\nabla^\bot\omega_3=-\rho\nabla f
&\mbox{in }\Omega,\\
\int_\Omega F_3dx=0,\,\omega_3=0
&\mbox{on }\partial\Omega.
\end{cases}
\end{equation}

Finally, $(F_4,\omega_4)$ is due to the boundary term. We have
\begin{equation}\label{S312}
\begin{cases}
\nabla F_4+\nabla^\bot\omega_4=0
&\mbox{in } \Omega,\\
\int_\Omega F_4dx=0,\,\omega_4=-Ku\cdot n^\bot
&\mbox{on }\partial\Omega.
\end{cases}
\end{equation}

With the help of \eqref{S309}--\eqref{S312}, we can transform \eqref{S306}  into the desired form
\begin{equation}\label{S314}
\frac{d}{dt}(\theta(\rho)+\tilde{F}_1)+P=u\cdot\nabla \tilde{F}_1-\sum_{i=2}^4F_i-\bar{F}.
\end{equation}

We comment that $\tilde{F}_1$ is a regular term which can be viewed as a controllable perturbation, see \eqref{CC306}. Thus $\theta(\rho)$ is the principal term, it can be identified as
$$\theta(\rho)\sim\rho^\beta,$$ for $\rho>M>1$, while $P(\rho)=\rho^\gamma$ is the damping term.
Based on such observation, we state our crucial lemma.
\begin{lemma}\label{L32}
Let $g_+\triangleq\max\{g,0\}$, then for any $1\leq\alpha<\infty$, there are constants $M$ and $C$ depending only on $\mu$, $\alpha$, $\beta$, $\gamma$, $\rho_0$, $u_0$, and $\Omega$, such that
\begin{equation}\label{S315}
\begin{split}
&\sup_{0\leq t\leq T}\int_\Omega\rho\Phi^\alpha dx\leq C,
\end{split}
\end{equation}
where $\Phi\triangleq (\theta(\rho)+\tilde{F_1}-M)_+$ is the principal term in \eqref{S314}.
\end{lemma}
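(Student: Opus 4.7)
The plan is to multiply equation (3.14) by $\alpha\rho\Phi^{\alpha-1}$ and integrate over $\Omega$. Since the cut-off $x\mapsto(x-M)_+$ is Lipschitz, the chain rule gives $\dot\Phi=\dot{(\theta(\rho)+\tilde F_1)}\cdot\mathbf{1}_{\{\Phi>0\}}$ almost everywhere. Combined with the continuity equation $\rho_t+\mathrm{div}(\rho u)=0$ and the boundary condition $u\cdot n|_{\partial\Omega}=0$, one obtains the identity
\begin{equation*}
\frac{d}{dt}\int_\Omega\rho\Phi^\alpha\,dx+\alpha\int_\Omega\rho P\Phi^{\alpha-1}\,dx=\alpha\int_\Omega\rho\Phi^{\alpha-1}\Big(u\cdot\nabla\tilde F_1-F_2-F_3-F_4-\bar F\Big)dx,
\end{equation*}
in which the term with $P=\rho^\gamma$ plays the role of damping.

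Next I would estimate the right-hand side term by term using Lemma 2.3 applied to the subsystems (3.10)--(3.13): part 3 yields $\|F_2\|_{L^p}\le C\|\rho|u|^2\|_{L^p}$, part 1 together with Poincar\'e controls $\|F_3\|_{L^p}$ by $\|\nabla f\|_{L^\infty}\|\rho\|_{L^p}$, part 2 controls $\|F_4\|_{W^{1,p}}$ by $\|u\|_{W^{1,p}}$, while the elliptic estimate for (3.13) gives $\|\tilde F_1\|_{W^{1,p}}\le C\|\rho u\|_{L^p}$; the mean $\bar F$ is uniformly bounded via Proposition 3.1. These $L^p$ bounds are then fed through Proposition 3.1, and, if needed, a preliminary bootstrap on $\|\rho\|_{L^q}$ obtained from the same identity applied with a smaller exponent.

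To close the estimate, note that on $\{\Phi>0\}$ one has $\theta(\rho)\ge M-|\tilde F_1|$, so choosing $M$ large forces $\rho$ (hence $P(\rho)=\rho^\gamma$) to be large wherever $|\tilde F_1|$ is moderate. H\"older and Young's inequalities then absorb each bad term on the right into the damping $\alpha\int_\Omega\rho P\Phi^{\alpha-1}\,dx$, leaving remainders that are either uniformly bounded by the energy or of the form $\varepsilon\int_\Omega\rho\Phi^\alpha\,dx$. The upshot is a differential inequality
\begin{equation*}
\frac{d}{dt}\int_\Omega\rho\Phi^\alpha\,dx+c\int_\Omega\rho\Phi^\alpha\,dx\le C,
\end{equation*}
from which (3.15) follows by Gronwall.

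The main obstacle will be the absorption step. Since $\|\rho u\|_{L^2}$ is controlled only by the energy, $\tilde F_1\in W^{1,2}$ embeds into $L^q$ for every $q<\infty$ but \emph{not} into $L^\infty$, so $M$ cannot be chosen once and for all to dominate $\tilde F_1$ pointwise; instead Young's inequality must be arranged so that $\tilde F_1$ appears only through an integrable remainder. The exponent balance is equally delicate, since $\Phi$ grows like $\rho^\beta$ on $\{\rho\gg 1\}$ while the damping grows like $\rho^{\gamma+1}\Phi^{\alpha-1}$; here the assumption $\gamma>1$ is what supplies the extra margin needed to close the Gronwall inequality uniformly in~$T$.
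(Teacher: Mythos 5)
Your setup is the paper's: multiply \eqref{S314} by $\alpha\rho\Phi^{\alpha-1}$, use the continuity equation and $u\cdot n|_{\partial\Omega}=0$ to get the identity with the damping $\alpha\int_\Omega\rho P\Phi^{\alpha-1}dx$, and estimate $F_2,F_3,F_4,\tilde F_1$ through Lemma \ref{L23}. The gap is in the closing step. You assert that after absorption the remainders are ``either uniformly bounded by the energy or of the form $\varepsilon\int_\Omega\rho\Phi^\alpha dx$'', leading to $\frac{d}{dt}\int_\Omega\rho\Phi^\alpha dx+c\int_\Omega\rho\Phi^\alpha dx\le C$. Neither half of that inequality is attainable under the hypotheses. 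First, the convective contributions ($F_2\sim\rho u\otimes u$ and $u\cdot\nabla\tilde F_1$) are quadratic in $u$, so any H\"older--Sobolev estimate of them carries a factor $\|\nabla u\|_{L^2}^2$; the energy estimate \eqref{S301} bounds only $\int_0^T\|\nabla u\|_{L^2}^2dt$, not $\sup_t\|\nabla u\|_{L^2}^2$, and these terms contain no power of the pressure, so they cannot be traded against the damping. The right-hand side is therefore unavoidably of the form $CA_1^2(t)\big(\int_\Omega\rho\Phi^\alpha dx+1\big)$ with a coefficient that is merely integrable in time, not bounded. Second, the damping does not dominate $c\int_\Omega\rho\Phi^\alpha dx$: on the set $\{\Phi>0\}$ one has $P(\rho)\gtrsim(\Phi+M)^{\gamma/\beta}-C|\tilde F_1|^{\gamma/\beta}$ (cf. \eqref{CC301}), and since only $\gamma>1$, $\beta>4/3$ are assumed, $\gamma/\beta$ may well be smaller than $1$, in which case $\rho(\Phi+M)^{\gamma/\beta}\Phi^{\alpha-1}\ll\rho\Phi^{\alpha}$ for large $\Phi$; hence remainders of size $\varepsilon\int_\Omega\rho\Phi^\alpha dx$ cannot be absorbed. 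Your final remark that $\gamma>1$ ``supplies the extra margin'' does not address either point; in this lemma $\gamma>1$ enters only through the choice of $M$ (e.g.\ the force term needs $M$ large compared with a power involving $\gamma-3/4$), while the ``extra order from $\gamma>1$'' advertised in the introduction refers to \eqref{S338}, a different argument. There is also a minor slip: $f\in H^2$ gives $\nabla f\in L^p$ for all finite $p$ but not $\nabla f\in L^\infty$, so $F_3$ must be handled as in the paper via $\|\rho\nabla f\|_{L^3}$.

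The correct closure, which your plan must be modified to follow, is \eqref{CC314}: every absorbed piece is put into $\varepsilon\int_\Omega\rho(\Phi+M)^{\gamma/\beta}\Phi^{\alpha-1}dx$ (never into $\varepsilon\int_\Omega\rho\Phi^\alpha dx$), the crude bound \eqref{CC306}, $\int_\Omega\rho^{\alpha\beta+1}dx\le C\big(\int_\Omega\rho\Phi^\alpha dx+M^\alpha\big)$, converts all density norms back into the quantity being estimated, and one ends with $\frac{d}{dt}\int_\Omega\rho\Phi^\alpha dx+\int_\Omega\rho(\Phi+M)^{\gamma/\beta}\Phi^{\alpha-1}dx\le CA_1^2\big(\int_\Omega\rho\Phi^\alpha dx+1\big)$. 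The uniform bound \eqref{S315} then follows from Gronwall using $\int_0^TA_1^2dt\le C$; the damping is not what yields uniformity in this lemma (it is exploited afterwards, in Corollary \ref{L33}, to get the time-integrated bound on $(\rho-M_1)_+$).
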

\begin{proof}
Multiplying \eqref{S314} by $\alpha\rho\Phi^{\alpha-1}$ and integrating over $\Omega$ yields
\begin{equation}\label{S316}
\begin{split}
&\frac{d}{dt}\int_\Omega\rho\Phi^\alpha dx+\alpha\int_\Omega\rho P(\rho)\Phi^{\alpha-1}dx
=\alpha\int_\Omega\rho\big(u\cdot\nabla \tilde{F}_1-\sum_{i=2}^4F_i-\bar{F}\big)\Phi^{\alpha-1}dx.
\end{split}
\end{equation}

 Let us transform the second term into a better form. We first deduce that under the condition of $\theta(\rho)+\tilde{F}_1-M\geq 0$, there is a constant $C$ such that
\begin{equation}\label{CC301}
\bigg(\Phi+M\bigg)^\frac{\gamma}{\beta}=\left(\theta(\rho)+\tilde{F}_1\right)^\frac{\gamma}{\beta}\leq
C\left(P(\rho)+|\tilde{F}_1|^{\frac{\gamma}{\beta}}\right).
\end{equation}

In fact, when $\rho\leq 1$, by determining $M>10\beta,$ we declare that
$$\tilde{F}_1\geq M-2\mu\log\rho-\frac{1}{\beta}\rho^\beta\geq-2\mu\log\rho>0.$$
Observing that  $-\log\rho$, $(\theta(\rho)-2\mu\log\rho)$, and $\tilde{F}_1$ are positive, we therefore derive that
\begin{equation}\label{CC302}
\left(\theta(\rho)+\tilde{F}_1\right)^\frac{\gamma}{\beta}\leq \left(\theta(\rho)-2\mu\log\rho+\tilde{F}_1\right)^\frac{\gamma}{\beta}
\leq C\left(\big(\theta(\rho)
-2\mu\log\rho\big)^{\frac{\gamma}{\beta}}+|\tilde{F}_1|^{\frac{\gamma}{\beta}}\right).
\end{equation}

When $\rho>1$, since $\theta(\rho)$ is positive, we deduce that
\begin{equation}\label{CC303}
\left(\theta(\rho)+\tilde{F}_1\right)^\frac{\gamma}{\beta}\leq C\left(\theta(\rho)^\frac{\gamma}{\beta}+|\tilde{F}_1|^{\frac{\gamma}{\beta}}\right)\leq
C\left(\big(\theta(\rho)
-2\mu\log\rho\big)^{\frac{\gamma}{\beta}}+|\tilde{F}_1|^{\frac{\gamma}{\beta}}\right).
\end{equation}

Recalling that $P(\rho)=\theta(\rho)-2\mu\log\rho$, thus \eqref{CC302} together with \eqref{CC303} gives \eqref{CC301}. Substituting \eqref{CC301} into \eqref{S316} yields
\begin{equation}\label{CC304}
\frac{d}{dt}\int_\Omega\rho\Phi^\alpha dx+\int_\Omega\rho(\Phi+M)^{\frac{\gamma}{\beta}}\Phi^{\alpha-1}dx
\leq C\int_\Omega\rho H\cdot \Phi^{\alpha-1}dx,
\end{equation}
where $H$ is given by
$$H\triangleq |\tilde{F}_1|^{\frac{\gamma}{\beta}}+|u|\cdot|\nabla\tilde{F}_1|+\sum_{j=2}^4|F_j|+|\bar{F}|.$$

 Clearly, the second term on the left side of \eqref{CC304} serves as damping, which plays the key role in deriving global estimates. We argue that for any $\varepsilon>0$ there are constants $M>10\beta$ and $C$ depending on $\varepsilon$, $\alpha,$ $\beta,$ $\gamma,$ and $\mu$ such that
\begin{equation}\label{CC305}
\int_\Omega\rho H\cdot \Phi^{\alpha-1}dx\leq
C(M)A_1^2\left(\int_\Omega\rho\Phi^\alpha dx+1\right)+\varepsilon\int_\Omega\rho(\Phi+M)^{\frac{\gamma}{\beta}}\Phi^{\alpha-1}dx,
\end{equation}
where the energy $A_1$ is given by \eqref{SS401}. To make  idea clear, the proof of \eqref{CC305} is postponed to the next Proposition \ref{P31}.

We substitute \eqref{CC305} into \eqref{CC304} and deduce that
\begin{equation}\label{CC314}
\frac{d}{dt}\int_\Omega\rho\Phi^\alpha dx+
\int_\Omega\rho(\Phi+M)^\frac{\gamma}{\beta}\Phi^{\alpha-1} dx\leq
CA_1^2\left(\int_\Omega\rho\Phi^\alpha dx+1\right).
\end{equation}
The energy estimate \eqref{S301} together with Gronwall's inequality gives
\begin{equation*}
\begin{split}
&\sup_{0\leq t\leq T}\int_\Omega\rho\Phi^\alpha dx\leq C.
\end{split}
\end{equation*}
We therefore finish the proof of Lemma \ref{L32}.
\end{proof}

Now, we establish the key estimate \eqref{CC305}. The calculations involved are rather detailed, we must check each terms carefully.

\begin{proposition}\label{P31}
Under the same setting of Lemma \ref{L32}, for any $\varepsilon>0$ there exist constants $M>10\beta$ and $C$ depending on $\varepsilon$, $\alpha,$ $\beta,$ $\gamma,$ and $\mu$, such that
\begin{equation*}
\int_\Omega\rho H\cdot \Phi^{\alpha-1}dx\leq
CA_1^2 \left(\int_\Omega\rho\Phi^\alpha dx+1\right)+\varepsilon\int_\Omega\rho(\Phi+M)^{\frac{\gamma}{\beta}}\Phi^{\alpha-1}dx.
\end{equation*}
\end{proposition}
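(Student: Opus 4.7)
The plan is to split the integrand $\rho H\Phi^{\alpha-1}$ into the six summands coming from the decomposition of $H$, and for each summand separate a part absorbable into the damping $\varepsilon\int\rho(\Phi+M)^{\gamma/\beta}\Phi^{\alpha-1}dx$ from a part of the form $CA_1^2(\int\rho\Phi^\alpha dx+1)$. The main tools will be Young's inequality for the splitting, the $L^p$ elliptic bounds in Lemma \ref{L23} applied to the four Cauchy-Riemann systems \eqref{S313}, \eqref{S310}, \eqref{S311}, \eqref{S312}, and the Poincar\'e--Sobolev inequality \eqref{S202}, which converts every factor $\|u\|_{L^p}$ into $Cp^{1/2}A_1$.

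First I would collect $L^p$ control on the auxiliary quantities. Lemma \ref{L23}(1) applied to \eqref{S313} gives $\|\nabla\tilde F_1\|_{L^p}\le C(p)\|\rho u\|_{L^p}$, which together with $\int\tilde F_1\,dx=0$ and Poincar\'e yields $\|\tilde F_1\|_{W^{1,p}}\le C(p)\|\rho u\|_{L^p}$, and hence (Sobolev in 2D, $p>2$) $\|\tilde F_1\|_{L^\infty}\le C\|\rho u\|_{L^p}$. Lemma \ref{L23}(3) applied to \eqref{S310} (legitimate since $\rho u\cdot n=0$ on $\partial\Omega$) yields $\|F_2\|_{L^p}\le C(p)\|\rho|u|^2\|_{L^p}$. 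Lemma \ref{L23}(1) applied to \eqref{S311} yields $\|\nabla F_3\|_{L^p}\le C\|f\|_{H^2}\|\rho\|_{L^p}$, and Lemma \ref{L23}(2) applied to \eqref{S312} yields $\|F_4\|_{W^{1,p}}\le C(p)\|u\|_{W^{1,p}}$, using the smooth extension of $K$ and $n^\bot$ to $\overline\Omega$. Finally, a direct integration gives a bound of the form $|\bar F|\le C\bigl(1+A_1(\int\rho^{2\beta}dx)^{1/2}+\int\rho^\gamma dx\bigr)$.

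Next, for each summand $\rho|N|\Phi^{\alpha-1}$ (with $N$ running over $|\tilde F_1|^{\gamma/\beta}$, $|F_2|$, $|F_3|$, $|F_4|$, $|\bar F|$, and $|u||\nabla\tilde F_1|$), I apply the Young splitting
\[
|N|\Phi^{\alpha-1}\le \epsilon(\Phi+M)^{\gamma/\beta}\Phi^{\alpha-1}+C_\epsilon|N|^{1+\beta(\alpha-1)/\gamma},
\]
moving the first piece into the damping and leaving the residual $\int\rho|N|^{s}dx$ with $s=1+\beta(\alpha-1)/\gamma$. Each residual is then split by H\"older to isolate a factor involving $u$ or $\nabla u$ (estimated via Step 1 together with $\|u\|_{L^p}\le Cp^{1/2}A_1$, always squaring to $A_1^2$) from a factor depending only on $\rho$. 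The pure-$\rho$ factor is absorbed into $\int\rho\Phi^\alpha dx$ by means of the elementary inequality
\[
\int\rho^{1+\alpha\beta}\,dx\le C(M)\int\rho\Phi^\alpha\,dx+C(M),
\]
which is valid once $M$ is taken large enough compared with $\|\tilde F_1\|_{L^\infty}$, because $\Phi\ge\rho^\beta/(2\beta)$ as soon as $\rho^\beta\ge 4\beta(M+|\tilde F_1|)$; the small contribution where $\rho$ is not large is controlled by mass conservation and the energy bound \eqref{S301}.

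The main obstacle is the convective summand $\rho|F_2|\Phi^{\alpha-1}$, because there $F_2\sim \rho|u|^2$ and the residual after Young has the form $\int\rho^{1+s}|u|^{2s}dx$. The delicate point is to tune the H\"older exponents so that the $u$-factor assembles into exactly $A_1^2$ while the resulting power of $\rho$ stays strictly below $1+\alpha\beta$, so that the pure-$\rho$ integral is dominated by $\int\rho\Phi^\alpha dx$. The needed strict headroom is provided precisely by the fact that $\gamma>1$, which makes the damping exponent $1+\gamma/\beta$ strictly bigger than $1$; if $\gamma$ were equal to $1$ the absorption would just fail. Choosing $\epsilon$ small and $M$ large (both allowed to depend on $\alpha,\beta,\gamma,\mu$) then closes the bound in the form required by \eqref{CC305}.
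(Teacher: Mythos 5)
Your overall architecture (six summands, the bound $\int_\Omega\rho^{\alpha\beta+1}dx\le C(\int_\Omega\rho\Phi^\alpha dx+M^\alpha)$, elliptic estimates for the four Cauchy--Riemann systems, absorption of force/boundary/average terms into the damping by taking $M$ large) matches the paper. But the central device you propose --- the uniform Young splitting
$|N|\Phi^{\alpha-1}\le \epsilon(\Phi+M)^{\gamma/\beta}\Phi^{\alpha-1}+C_\epsilon|N|^{1+\beta(\alpha-1)/\gamma}$
applied to \emph{every} summand --- breaks down exactly on the terms you call the main obstacle. The residual raises $N$ to the power $s=1+\beta(\alpha-1)/\gamma>1$, and for $N=F_2\sim\rho|u|^2$ (and likewise for $N=|u||\nabla\tilde F_1|$ and for $|\tilde F_1|^{\gamma/\beta}$ when treated this way) the velocity then appears with total power $2s$ in the integrand. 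No choice of H\"older exponents can lower that power: every factor $\|u\|_{L^q}$ costs one factor of $A_1$ via \eqref{S202}, so the residual is at best of size $A_1^{2s}$ times a density quantity, not $A_1^2$ times one. Since the proposition's right-hand side must be \emph{linear} in $A_1^2$ (this is what lets Lemma \ref{L32} close with Gronwall and $\int_0^TA_1^2dt\le C$ from \eqref{S301}), the step ``tune the H\"older exponents so that the $u$-factor assembles into exactly $A_1^2$'' cannot be carried out; the claim that the only delicate point is keeping the $\rho$-power below $1+\alpha\beta$ misses where the real constraint sits.

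The paper avoids this by never sacrificing the weight $\Phi^{\alpha-1}$ on the principal terms: for $I_2,I_3$ it keeps $F_2$ and $u\cdot\nabla\tilde F_1$ to the \emph{first} power, H\"olders the measure as $\rho\Phi^{\alpha-1}\le\rho^{1/\alpha}\,(\rho\Phi^{\alpha})^{(\alpha-1)/\alpha}$, and bounds $\|F_2\|_{L^{(\alpha\beta+1)/\beta}},\ \|u\cdot\nabla\tilde F_1\|_{L^{(\alpha\beta+1)/\beta}}\le C\|\nabla u\|_{L^2}^2\|\rho\|_{L^{\alpha\beta+1}}$, so $u$ enters only quadratically and \eqref{CC306} supplies the density factor; for $I_1$ it uses $\|\tilde F_1\|_{L^\infty}^{2}$ (or $\|\tilde F_1\|_{L^\infty}^{\gamma/\beta}$ when $\gamma>2\beta$) $\le CA_1^2$ and only splits off a \emph{constant} to be absorbed by the damping, again keeping the weight intact. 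Your Young splitting is fine for the genuinely lower-order pieces ($F_3$, $F_4$, $\bar F$), which is essentially what the paper does there. A secondary point: in your justification of $\int\rho^{1+\alpha\beta}dx\le C(M)\int\rho\Phi^\alpha dx+C(M)$ you let $M$ be ``large compared with $\|\tilde F_1\|_{L^\infty}$''; $M$ must be a fixed constant while $\|\tilde F_1\|_{L^\infty}$ is a time-dependent quantity controlled only through $A_1$, so this should instead be handled as in \eqref{CC308}--\eqref{CC309}, by estimating $\int\rho|\tilde F_1|^\alpha dx$ with \eqref{CC307} and Young's inequality rather than by enlarging $M$.
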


\begin{proof}
Note that
\begin{equation*}
\begin{split}
&\int_\Omega\rho H\cdot \Phi^{\alpha-1}dx=\int_\Omega\rho\left(|\tilde{F}_1|^{\frac{\gamma}{\beta}}+|u|\cdot|\nabla\tilde{F}_1|+\sum_{j=2}^4|F_j|+|\bar{F}|\right)\Phi^{\alpha-1}dx\triangleq\sum_{k=1}^6I_k.
\end{split}
\end{equation*}

We first mention that $\Phi$ provides a proper control of density $\rho$, in the sense that, for any $1\leq\alpha<\infty$ and $\beta>1$, we can find a constant $C$ independent of $M$, satisfying
\begin{equation}\label{CC306}
\int_\Omega\rho^{\alpha\beta+1}dx\leq C\left(\int_\Omega\rho\Phi^\alpha dx+M^\alpha\right).
\end{equation}

In fact, observe that Sobolev embedding theorem together with the elliptic estimate \eqref{S203} guarantees
\begin{equation}\label{CC307}
\|\tilde{F}_1\|_{L^{p}}
\leq C\|\rho u\|_{L^{\frac{2p}{p+1}}}
\leq C\|\rho^\frac{1}{2}\|_{L^p}\cdot\|\rho^\frac{1}{2}u\|_{L^2}\leq
C\|\rho\|_{L^\frac{p}{2}}^\frac{1}{2},
\end{equation}
which combined with the fact $\beta>4/3$ also leads to
\begin{equation}\label{CC309}
\begin{split}
\int_\Omega\rho|\tilde{F}_1|^\alpha dx
\leq\|\rho\|_{L^{\alpha\beta+1}}\|\tilde{F}_1\|_{L^{ (\alpha\beta+1)/\beta}}^\alpha
\leq C\|\rho\|_{L^{\alpha\beta+1}}^{\frac{\alpha}{2}+1}.
\end{split}
\end{equation}

Moreover, direct computations give 
$$\theta(\rho)\leq\Phi+|\tilde{F}_1|+M.$$
Consequently, in view of \eqref{CC309}, we declare that
\begin{equation}\label{CC308}
\begin{split}
\int_\Omega\rho^{\alpha\beta+1}dx&=
\int_{\{\rho>1\}}\rho^{\alpha\beta+1}dx
+\int_{\{\rho\leq 1\}}\rho^{\alpha\beta+1}dx\\
&\leq \int_{\{\rho>1\}}\rho \big(\theta(\rho)\big)^\alpha dx+1\\
&\leq C\left(\int_{\Omega}\rho(\Phi^\alpha+|\tilde{F}_1|^\alpha)dx+M^\alpha\right)\\
&\leq C\left(\int_\Omega\rho\Phi^\alpha dx+\|\rho\|_{L^{\alpha\beta+1}}^{\frac{\alpha}{2}+1}+M^\alpha\right).
\end{split}
\end{equation}

Note that $\frac{\alpha}{2}+1<\alpha\beta+1$, thus we apply Young's inequality to \eqref{CC308} and establish \eqref{CC306}.
Then, let us check each $I_k$ in details. Since we are concerning with the bounded domain, it is harmless to assume $\alpha\geq 3$ in our proof.
\\

\textit{Estimate of $I_1$.}
The term $I_1$ is a controllable perturbation due to $\tilde{F}_1$. There are two cases we must investigate.

First, if $\gamma\leq 2\beta$,
by virtue of \eqref{S203} and Sobolev embedding theorem, we declare that, for any $\varepsilon>0$,
\begin{equation}\label{CC315}
\|\tilde{F}_1\|_{L^\infty}^2
\leq C(\varepsilon)\|\rho u\|_{L^{2+\varepsilon}}^2
\leq C(\varepsilon)\|\nabla u\|_{L^2}^2\|\rho\|_{L^{2+2\varepsilon}}^2.
\end{equation}

When $\gamma\leq 2$, we check that
\begin{equation*}
\begin{split}
\|\rho\|_{L^{2+2\varepsilon}}^2
\leq\|\rho\|_{L^{\gamma}}^{2(1-\theta)}\|\rho\|_{L^{\alpha\beta+1}}^{2\theta}\leq C\|\rho\|_{L^{\alpha\beta+1}}^{2\theta},
\end{split}
\end{equation*}
where $\theta$ is given by
\begin{equation}\label{CC310}
\frac{1-\theta}{\gamma}+\frac{\theta}{\alpha\beta+1}=\frac{1}{2+2\varepsilon}.
\end{equation}
Under current setting, we have $\alpha\geq 3$, $\beta>1$, and $1<\gamma\leq 2$. By taking $\varepsilon<\gamma-1$, \eqref{CC310} ensures
$$\frac{2\theta}{\alpha\beta+1}<\frac{1}{\alpha}.$$
Consequently, if $\gamma\leq 2$, we apply \eqref{CC306} to derive
$$\|\rho\|_{L^{2+2\varepsilon}}^2
\leq C\|\rho\|_{L^{\alpha\beta+1}}^{2\theta}\leq C\left(\int_\Omega\rho\Phi^\alpha dx+M^\alpha\right)^{\frac{1}{\alpha}}.$$

While for $\gamma>2$, by choosing $2\varepsilon<\gamma-2$, we directly have
\begin{equation*}
\|\rho\|_{L^{2+2\varepsilon}}^2
\leq C\|\rho\|_{L^{\gamma}}^2
\leq C.
\end{equation*}

In conclusion, we declare that
\begin{equation*}
\|\tilde{F}_1\|_{L^\infty}^2
\leq C\|\nabla u\|_{L^2}^2\|\rho\|_{L^{2+2\varepsilon}}^2
\leq C(M)A_1^2\left(\int_\Omega\rho\Phi^\alpha dx+1\right)^{\frac{1}{\alpha}},
\end{equation*}
which leads to
\begin{equation}\label{CC312}
\begin{split}
I_1&=\int_{\Omega}\rho
|\tilde{F}_1|^{\frac{\gamma}{\beta}}\Phi^{\alpha-1}dx\\
&\leq C\|\tilde{F}_1\|_{L^\infty}^2\int_\Omega\rho
\Phi^{\alpha-1}dx+C\int_\Omega\rho
\Phi^{\alpha-1}dx\\
&\leq CA_1^2 \left(\int_\Omega\rho\Phi^\alpha dx+1\right)+\varepsilon\int_\Omega\rho(\Phi+M)^{\frac{\gamma}{\beta}}\Phi^{\alpha-1}dx,\\
\end{split}
\end{equation}
provided $M>(C/\varepsilon)^\frac{\beta}{\gamma}$.

Next, if $\gamma>2\beta$, we mention that \eqref{S301} ensures
$$\sup_{0\leq t\leq T}\|\rho u\|_{L^{2\gamma/(\gamma+1)}}\leq \sup_{0\leq t\leq T}(\|\sqrt\rho u\|_{L^2}\|\rho\|_{L^\gamma}^{1/2})\le C.$$
Thus, according to elliptic estimate \eqref{S203} and Sobolev embedding theorem, we declare that, for some $\varepsilon>0$,
\begin{equation*}
\|\tilde{F}_1\|_{L^\infty}^{\frac{\gamma}{\beta}}
\leq C(\varepsilon)\|\rho u\|_{L^{2\gamma/(\gamma+1)}}^{\frac{\gamma}{\beta}-2}\|\rho u\|_{L^{\gamma-\varepsilon}}^2
\leq C\|\rho u\|_{L^{2\gamma/(\gamma+1)}}^{\frac{\gamma}{\beta}-2}\|\rho\|_{L^{\gamma}}^2\|\nabla u\|_{L^2}^2,
\end{equation*}
where $\varepsilon$ is determined by
\begin{equation}\label{CC311}
\frac{\gamma+1}{2\gamma}\cdot\left(1-\frac{2\beta}{\gamma}\right)+\frac{1}{\gamma-\varepsilon}\cdot\frac{2\beta}{\gamma}<\frac{1}{2}.
\end{equation}

We mention that \eqref{CC311} is possible. Observe that, for the case of $\beta>1$ and $\gamma>2\beta>2$, we have
$$\frac{\gamma+1}{2\gamma}\cdot\left(1-\frac{2\beta}{\gamma}\right)+\frac{1}{\gamma}\cdot\frac{2\beta}{\gamma}<\frac{1}{2},$$
since it is equivalent to $1/\gamma+1/(2\beta)<1.$ Thus \eqref{CC311} is valid for  properly small $\varepsilon$.

Consequently, we argue that
$$\|\tilde{F}_1\|_{L^\infty}^{\frac{\gamma}{\beta}}\leq C\|\rho u\|_{L^{2\gamma/(\gamma+1)}}^{\frac{\gamma}{\beta}-2}\|\rho\|_{L^{\gamma}}^2\|\nabla u\|_{L^2}^2\leq CA_1^2,$$
which leads to
\begin{equation}\label{CC313}
\begin{split}
I_1&=\int_{\Omega}\rho
|\tilde{F}_1|^{\frac{\gamma}{\beta}}\Phi^{\alpha-1}dx\\
&\leq C\|\tilde{F}_1\|_{L^\infty}^{\frac{\gamma}{\beta}}
\left(\int_\Omega\rho\Phi^{\alpha-1}dx\right) \\
&\leq CA_1^2
\left(\int_\Omega\rho\Phi^{\alpha}dx+1\right).\\
\end{split}
\end{equation}

Combining \eqref{CC312} with \eqref{CC313}, we declare that
$$I_1\leq CA_1^2 \left(\int_\Omega\rho\Phi^\alpha dx+1\right)+\varepsilon\int_\Omega\rho(\Phi+M)^{\frac{\gamma}{\beta}}\Phi^{\alpha-1}dx.$$
\\

\textit{Estimates of $I_2$ and $I_3$.}
We comment that $I_2$ and $I_3$ are principal terms. Note that the order of them coincides, since in view of elliptic estimate \eqref{S203} and \eqref{S204}, we can formally identify
$$u\cdot\nabla\tilde{F}_1,\,\ F_2\sim \rho u\otimes u.$$

To deal with $I_2$, we make use of Sobolev embedding theorem, elliptic estimate \eqref{S203}, and the fact $\beta>4/3$ to infer that
\begin{equation*}
\|u\cdot\nabla\tilde{F}_1\|_{L^{(\alpha\beta+1)/\beta}}
\leq C\|u\|_{L^{7(\alpha\beta+1)/\beta}}\cdot\|\rho u\|_{L^{7(\alpha\beta+1)/6\beta}}
\leq C\|\nabla u\|_{L^2}^2\|\rho\|_{L^{\alpha\beta+1}},
\end{equation*}
which combined with \eqref{CC306} gives
$$\|u\cdot\nabla\tilde{F}_1\|_{L^{(\alpha\beta+1)/\beta}}
\leq CA_1^2\left(\int_\Omega\rho\Phi^\alpha dx +1\right)^{\frac{1}{\alpha\beta+1}}.$$
Therefore, we argue that
\begin{equation*}
\begin{split}
I_2&=\int_\Omega\rho|u|\cdot|\nabla\tilde{F}_1|
\Phi^{\alpha-1}dx\\
&\leq\|u\cdot\nabla\tilde{F}_1\|_{L^{(\alpha\beta+1)/\beta}}\cdot\|\rho\|_{L^{\alpha\beta+1}}^{\frac{1}{\alpha}}
\left(\int_\Omega\rho\Phi^{\alpha}dx\right)^{\frac{\alpha-1}{\alpha}}\\
&\leq C\|u\cdot\nabla\tilde{F}_1\|_{L^{(\alpha\beta+1)/\beta}}\left(\int_\Omega\rho\Phi^\alpha dx +1\right)^{1-\frac{\beta}{\alpha\beta+1}}\\
&\leq CA_1^2\left(\int_\Omega\rho\Phi^\alpha dx +1\right).
\end{split}
\end{equation*}

$I_3$ is similar, with the help of \eqref{S204}, Sobolev embedding theorem and the fact $\beta>4/3$, we infer that
\begin{equation*}
\|F_2\|_{L^{(\alpha\beta+1)/\beta}}
\leq C\|\rho|u|^2\|_{L^{(\alpha\beta+1)/\beta}}
\leq C\|\nabla u\|_{L^2}^2\|\rho\|_{L^{\alpha\beta+1}},
\end{equation*}
which combined with \eqref{CC306} gives
$$\|F_2\|_{L^{(\alpha\beta+1)/\beta}}\leq
CA_1^2\left(\int_\Omega\rho\Phi^\alpha dx +1\right)^{\frac{1}{\alpha\beta+1}}.$$
Therefore, we argue that
\begin{equation*}
\begin{split}
I_3&=\int_\Omega\rho|F_2|\cdot\Phi^{\alpha-1}dx\\
&\leq\|F_2\|_{L^{(\alpha\beta+1)/\beta}}\cdot\|\rho\|_{L^{\alpha\beta+1}}^{\frac{1}{\alpha}}
\left(\int_\Omega\rho\Phi^{\alpha}dx\right)^{\frac{\alpha-1}{\alpha}}\\
&\leq C\|F_2\|_{L^{(\alpha\beta+1)/\beta}}\left(\int_\Omega\rho\Phi^\alpha dx +1\right)^{1-\frac{\beta}{\alpha\beta+1}}\\
&\leq CA_1^2\left(\int_\Omega\rho\Phi^\alpha dx +1\right).
\end{split}
\end{equation*}
\\

\textit{Estimate of $I_4$.}
 By virtue of elliptic estimate \eqref{S203} and Sobolev embedding theorem, we have that  for some constant $C$ independent with $M$,
\begin{equation*}
\|F_3\|_{L^\infty}
\leq C\|\rho\nabla f\|_{L^{3}}
\leq C\|\rho\|_{L^{4}}.
\end{equation*}
By taking $\alpha'=3/\beta$ in \eqref{CC306}, we deduce that
$$\|F_3\|_{L^\infty}
\leq C\|\rho\|_{L^{\alpha'\beta+1}}
\leq C\left(\int_\Omega\rho\Phi^{\frac{3}{\beta}}dx+M^{\frac{3}{\beta}}\right)^{\frac{1}{4}}.$$

Consequently,
\begin{equation*}
\begin{split}
I_4&\leq C\|F_3\|_{L^\infty}\int_\Omega\rho\Phi^{\alpha-1}dx\\
&\leq C\left(\int_\Omega\rho\Phi^{\frac{3}{\beta}}dx\right)^{\frac{1}{4}}\int_\Omega\rho\Phi^{\alpha-1}dx+C M^{\frac{3}{4\beta}} \int_\Omega\rho\Phi^{\alpha-1}dx\\
&\leq C\int_\Omega\rho\Phi^{\frac{3}{4\beta}+\alpha-1}dx+C M^{\frac{3}{4\beta}} \int_\Omega\rho\Phi^{\alpha-1}dx\\
&\leq\varepsilon\int_\Omega(\Phi+M)^{\frac{\gamma}{\beta}}\Phi^{\alpha-1}dx,
\end{split}
\end{equation*}
by setting $M>(C\varepsilon)^\frac{\beta}{\gamma-3/4}$, where  we have used the fact $\gamma>1>3/4$.
\\

\textit{Estimate of $I_5$.}
Note that the elliptic estimate \eqref{S205} and Sobolev embedding theorem give for any $1\leq p<\infty$,
\begin{equation}\label{CC353}
\|F_4\|_{L^p}\leq C(p)\|\nabla u\|_{L^2}\leq CA_1.
\end{equation}
Thus, by taking $p=2\alpha\gamma/(\gamma+1)$ in \eqref{CC353}, we check that
\begin{equation*}
\begin{split}
I_5&=\int_\Omega\rho|F_4|\Phi^{\alpha-1}dx\\
&\leq C\int_\Omega \rho|F_4|^2\Phi^{\alpha-1}dx+\int_\Omega \rho\Phi^{\alpha-1}dx\\
&\leq C\|\rho\|_{L^\gamma}\|F_4\|_{L^p}^2(\int_\Omega\rho\Phi^{\alpha}dx)^{\frac{\alpha-1}{\alpha}}+\int\rho\Phi^{\alpha-1}dx\\
&\leq CA_1^2\left(\int_\Omega\rho\Phi^{\alpha}dx+1\right)+\varepsilon\int\rho(\Phi+M)^\frac{\gamma}{\beta}\Phi^{\alpha-1}dx,
\end{split}
\end{equation*}
by setting $M>(C\varepsilon)^\frac{\beta}{\gamma}$.
\\

\textit{Estimate of $I_6$.}
 Note that by virtue of \eqref{CC306}, we argue that
\begin{equation*}
\begin{split}
\bar{F}&=\int_\Omega\rho^\beta\mathrm{div}udx+\int_\Omega\rho^\gamma dx\\
&\leq\left(\int\rho^\beta(\mathrm{div}u)^2dx\right)^\frac{1}{2}\cdot\left(\int_\Omega\rho^\beta dx\right)^\frac{1}{2}+C,\\
&\leq C\left(\int\rho^\beta(\mathrm{div}u)^2dx\right)\cdot\left(\int_\Omega\rho^{\alpha\beta+1}dx\right)^{\frac{\beta}{\alpha\beta+1}} +C\\
&\leq CA_1^2\left(\int_\Omega\rho\Phi^\alpha dx+1\right)^{\frac{\beta}{\alpha\beta+1}}+C.
\end{split}
\end{equation*}

Consequently, we deduce that
\begin{equation}\label{S326}
\begin{split}
I_6&=|\bar{F}|\int_\Omega\rho\Phi^{\alpha-1}dx\\
&\leq C A_1^2
\left(\int_\Omega\rho\Phi^{\alpha}dx+1\right)^{\frac{\beta}{\alpha\beta+1}+\frac{\alpha-1}{\alpha}}+C\int_\Omega\rho\Phi^{\alpha-1}dx\\
&\leq CA_1^2\left(\int_\Omega\rho\Phi^{\alpha}dx+1\right)+\varepsilon\int\rho(\Phi+M)^\frac{\gamma}{\beta}\Phi^{\alpha-1}dx,\\
\end{split}
\end{equation}
by setting $M>(C\varepsilon)^\frac{\beta}{\gamma}$.

Combining all estimates about $I_k$, we set $M>\max\{10\beta,(C\varepsilon)^\frac{\beta}{\gamma-3/4}\}$ and arrive at
$$\int_\Omega\rho H\cdot \Phi^{\alpha-1}dx\leq
C(M)A_1^2 \left(\int_\Omega\rho\Phi^\alpha dx+1\right)+\varepsilon\int_\Omega\rho(\Phi+M)^{\frac{\gamma}{\beta}}\Phi^{\alpha-1}dx,$$
which finishes the proof of Proposition \ref{P31}.
\end{proof}

\begin{remark}
In Vaigant-Kazhikov \cite{vaigant1995}, the damping term
$$\int_\Omega\rho P(\rho)\Phi^{\alpha-1}dx,$$
is dropped out directly since it is positive.

However  in our arguments, we make full use of it to cancel out the perturbation term $I_1$, the external force term $I_4$, the boundary term $I_5$ and the lower order term $I_6$, which is important in deriving uniform estimates of density in Lemma \ref{L32}.
\end{remark}

Lemma \ref{L32} further leads to the uniform $L^p$ estimates of the density $\rho$.
\begin{corollary}\label{L33}
For any $3\leq\alpha<\infty$, there are constants $C$ and $M_1$ depending only on $\mu$, $\alpha$, $\beta$, $\gamma$, $\rho_0$, $u_0$, and $\Omega$, such that
\begin{equation}\label{S327}
\sup_{0\leq t\leq T}\|\rho\|_{L^{\alpha\beta+1}}+\int_0^T\int_\Omega(\rho-M_1)_+^{\alpha-1} dxdt\leq C.
\end{equation}
\end{corollary}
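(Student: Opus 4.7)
The first half of \eqref{S327} is immediate: applying Lemma \ref{L32} with the given $\alpha \geq 3$ gives $\sup_t \int \rho \Phi^\alpha\,dx \leq C$, and inserting this into \eqref{CC306} yields the uniform $L^{\alpha\beta+1}$ bound on $\rho$. For the space--time bound on $(\rho-M_1)_+^{\alpha-1}$, I would back up one step in the proof of Lemma \ref{L32}: integrating the differential inequality \eqref{CC314} over $[0,T]$ and combining Proposition \ref{L31} (which gives $\int_0^T A_1^2\,dt \leq C$) with the just-obtained supremum bound produces
\begin{equation*}
\int_0^T\!\!\int_\Omega \rho(\Phi+M)^{\gamma/\beta}\Phi^{\alpha-1}\,dx\,dt \leq C,
\end{equation*}
which, after using $(\Phi+M)^{\gamma/\beta} \geq M^{\gamma/\beta}$, implies $\int_0^T\!\!\int \rho\Phi^{\alpha-1}\,dx\,dt \leq C$. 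This will be the only space--time damping I use.

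The task then becomes transferring this damping from $\rho\Phi^{\alpha-1}$ to $(\rho-M_1)_+^{\alpha-1}$. I would choose $M_1$ large enough that $M_1 \geq 1$ and $\theta(M_1)\gg M$, and split the set $\{\rho > M_1\}$ according to the size of $\tilde{F}_1$. On the ``good'' subset $\{\tilde{F}_1 \geq -\theta(\rho)/2\}$ the definition $\Phi = (\theta(\rho)+\tilde{F}_1-M)_+$ together with $\theta(\rho) \geq \rho^\beta/\beta$ yields $\Phi \geq c\rho^\beta$, whence $\rho\Phi^{\alpha-1} \geq c\rho^{1+\beta(\alpha-1)} \geq c(\rho-M_1)_+^{\alpha-1}$ (using $\beta > 1$ and $\rho \geq 1$). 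Thus the good-set contribution is controlled by $C\int_0^T\!\!\int\rho\Phi^{\alpha-1}\,dx\,dt \leq C$.

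The main obstacle will be the ``bad'' subset $B_t = \{\rho > M_1,\ \tilde{F}_1 < -\theta(\rho)/2\}$, where $\Phi$ carries no useful pointwise information about $\rho$ because $\tilde{F}_1$ is very negative. There $|\tilde{F}_1| > \rho^\beta/(2\beta)$, so $\rho^{\alpha-1}\leq C|\tilde{F}_1|^{(\alpha-1)/\beta}$, and furthermore $B_t \subset \{|\tilde{F}_1| > cM_1^\beta\}$. A Chebyshev-type estimate gives
\begin{equation*}
\int_{B_t}(\rho-M_1)_+^{\alpha-1}\,dx \leq C\,M_1^{-\beta r}\,\|\tilde{F}_1\|_{L^{(\alpha-1)/\beta + r}}^{(\alpha-1)/\beta + r}
\end{equation*}
for any $r > 0$. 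To handle the time integral I would employ the Lyapunov-type interpolation $\|\tilde{F}_1\|_{L^q}^{q} \leq \|\tilde{F}_1\|_{L^\infty}^{q-p_0}\|\tilde{F}_1\|_{L^{p_0}}^{p_0}$ with $p_0 = q-2$, and then use (i) the just-proved uniform $L^p$ bound on $\rho$, which via \eqref{CC307} supplies $\sup_t\|\tilde{F}_1\|_{L^{p_0}} \leq C$ for all finite $p_0$, and (ii) the time-integrability $\int_0^T\|\tilde{F}_1\|_{L^\infty}^2\,dt \leq C\int_0^T A_1^2\,dt \leq C$ that is implicit in the estimate \eqref{CC315}. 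Choosing $r$ so that $(\alpha-1)/\beta + r \geq 2$ then makes the time integral finite uniformly in $T$. Adding the good-set and bad-set contributions produces the desired bound and completes the proof.
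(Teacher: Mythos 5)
Your proposal is correct and follows essentially the same route as the paper: the sup bound from Lemma \ref{L32} and \eqref{CC306}, the space--time damping bound $\int_0^T\!\int_\Omega\rho\Phi^{\alpha-1}dxdt\leq C$ from integrating \eqref{CC314}, and the control of $\tilde F_1$ through its uniform $L^p$ bound \eqref{CC307} together with $\|\tilde F_1\|_{L^\infty}\leq CA_1$ and $\int_0^TA_1^2dt\leq C$. The only cosmetic difference is that you handle the region where $\tilde F_1$ spoils the pointwise comparison by a good/bad set split plus Chebyshev, whereas the paper uses the pointwise inequality $(\theta(\rho)-M)_+\leq\Phi+|\tilde F_1|$ and then bounds $\int_0^T\!\int_\Omega\rho|\tilde F_1|^{\alpha-1}dxdt$ by the same $\|\tilde F_1\|_{L^\infty}^2\|\tilde F_1\|_{L^{2(\alpha-1)}}^{\alpha-3}$-type estimate before passing to $(\rho-M_1)_+$ with $M_1^\beta=\beta M$.
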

\begin{proof}
This corollary rearranges Proposition \ref{L32} to a better form.
First, by virtue of \eqref{CC306} and Lemma \ref{L32}, we declare that
\begin{equation}\label{S329}
\sup_{0\leq t\leq T}
\left(\int_\Omega\rho^{\alpha\beta+1}dx\right)\leq C\sup_{0\leq t\leq T}\left(\int_\Omega\rho\Phi^\alpha dx\right)+C\leq C.
\end{equation}
Substituting \eqref{S329} into \eqref{CC314}, we also declare that, for $\alpha\geq 3$
\begin{equation}\label{S331}
\int_0^T\int_\Omega\rho\Phi^{\alpha-1}dx dt\leq C.
\end{equation}

Now, observe that
\begin{equation*}
\int_\Omega\rho|\tilde{F}_1|^{\alpha-1} dx
\leq\|\rho\|_{L^2}\|\tilde{F}_1\|_{L^{2(\alpha-1)}}^{\alpha-1}
\leq C\|\tilde{F}_1\|_{L^{2(\alpha-1)}}^{\alpha-3}\cdot
\|\tilde{F}_1\|_{L^{\infty}}^{2}.
\end{equation*}
By virtue of \eqref{CC307} and \eqref{S329}, on the one hand, we have
\begin{equation*}
\|\tilde{F}_1\|_{L^{2(\alpha-1)}}\leq C\|\rho\|_{L^{\alpha-1}}^{\frac{1}{2}}\leq C.
\end{equation*}
On the other hand, \eqref{CC315} together with \eqref{S329} also guarantees
\begin{equation*}
\|\tilde{F}_1\|_{L^{\infty}}
\leq C\|\rho\|_{L^{3}}\|\nabla u\|_{L^2}\leq CA_1.
\end{equation*}
These results lead to
\begin{equation*}
\int_0^T\int_\Omega\rho|\tilde{F}_1|^{\alpha-1} dxdt
\leq C\int_0^TA_1^2dt\leq C,
\end{equation*}
which together with \eqref{S331} gives
\begin{equation}\label{S332}
\begin{split}
&\int_0^T\int_\Omega\rho(\theta(\rho)-M)_+^{\alpha-1}dx dt\leq C\int_0^T\int_\Omega\rho\left(\Phi^{\alpha-1}+|\tilde{F}_1|^{\alpha-1}\right)dx dt\leq C.
\end{split}
\end{equation}

 Next, let us transform \eqref{S332} into a neat form. Observe that $(\rho^\beta-\beta M)>0$ enforces $\rho\geq 1$ since $M>10\beta$. Consequently, we argue that
\begin{equation*}
(\theta(\rho)-M)_+=(2\mu\log\rho+\frac{1}{\beta}\rho^{\beta}-M)_+\geq C(\rho^\beta-\beta M)_+.
\end{equation*}
We set $M_1^\beta=\beta M$ to deduce
\begin{equation*}
(\theta(\rho)-M)_+\geq C(\rho^\beta-\beta M)_+\geq C(\rho-M_1)_+,
\end{equation*}
which together with \eqref{S332} gives
\begin{equation}\label{S333}
\int_0^T\int_\Omega(\rho-M_1)_+^{\alpha-1}dxdt
\leq C\int_0^T\int_\Omega\rho(\theta(\rho)-M )_+^{\alpha-1}dxdt\leq C.
\end{equation}

Now \eqref{S333} together with \eqref{S329} gives \eqref{S327}, we therefore finish the proof.
\end{proof}

\begin{remark}
Compared with the $L^p$ estimate of density $\rho$ due to Vaigant-Kazhikov \cite{vaigant1995}
$$\|\rho\|_{L^p}\leq C(T)p^{\frac{2}{\beta-1}},$$
Lemma \ref{L33} gives the uniform $L^p$ estimates of the density $\rho$,
$$\|\rho\|_{L^p}\leq C(p).$$

Moreover, with the help of the damping term, we also derive the global in time integrability of $\rho$,  
$$\int_0^T\int_\Omega(\rho-M_1)_+^pdxdt\leq C\ (p\geq 2),$$
which prevents $\rho$ growing too fast and improves the result of Vaigant-Kazhikov \cite{vaigant1995}.
\end{remark}

\subsection{Application I: Large time behavior of density}\label{SS32}

\quad Proposition \ref{L34} ensures the density is uniformly bounded in $L^p$ space, then we can study the large time behavior of the density more accurately. We first apply the argument in \cite{HLX} to deduce that $\rho$ converges to the steady state $\rho_s$ in strong sense.

\begin{proposition}\label{L34}
Under the hypothesis of Theorem \ref{T11}, suppose \eqref{CC101} is valid and $(\rho, u)$ is the unique strong solution given by Lemma \ref{L21}. If $\rho_s$ is the steady state given by \eqref{S107}, then for any $1\leq p<\infty$, we have
\begin{equation}\label{S334}
\lim_{t\rightarrow\infty}\|\rho-\rho_s\|_{L^{p}}=0.
\end{equation}
\end{proposition}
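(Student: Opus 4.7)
The strategy, following Huang-Li-Xin \cite{HLX}, is to test the momentum equation against $\varphi := \rho_s^{-1}\mathcal{B}(\rho-\rho_s)$, exploit a second-order Taylor expansion of $P$ around $\rho_s$ to extract a quadratic damping in $\rho-\rho_s$, and then deduce time-integrability of $\|\rho-\rho_s\|_{L^2}$, from which the convergence follows via interpolation.

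First I would check that $\varphi$ is well defined. Integrating the continuity equation $\eqref{S101}_1$ gives $\int_\Omega\rho\,dx=\int_\Omega\rho_0\,dx=\int_\Omega\rho_s\,dx$, so $\rho-\rho_s\in L^p_0$ and $\mathcal{B}(\rho-\rho_s)$ is defined by Lemma \ref{L25}. Since Corollary \ref{C21} gives $\rho_s\in H^2$ with $\rho_s\ge c_0>0$, $\varphi=\rho_s^{-1}\mathcal{B}(\rho-\rho_s)\in W^{1,p}_0$ with $\|\varphi\|_{W^{1,p}}\le C\|\rho-\rho_s\|_{L^p}$ for every $1<p<\infty$. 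The plan is then to multiply \eqref{S303} by $\varphi$, integrate over $\Omega$, and integrate by parts. The pressure and the external-force terms combine, using the steady-state identity $\nabla\rho_s^\gamma=\rho_s\nabla f$, into
\begin{equation*}
\int_\Omega\bigl[P(\rho)-P(\rho_s)-P'(\rho_s)(\rho-\rho_s)\bigr]\rho_s^{-1}(\rho-\rho_s)\,dx + \text{(remainder)},
\end{equation*}
where the bracketed Taylor remainder is pointwise bounded below by $c(\rho+1)^{\gamma-2}(\rho-\rho_s)^2$, giving an integrand comparable to $(\rho+1)^{\gamma-1}(\rho-\rho_s)^2$ after using the positivity of $\rho$ and $\rho_s^{-1}\ge c$. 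This is precisely the damping alluded to in \eqref{S114} and is where the hypothesis $\gamma>1$ is decisive.

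The remaining terms must be absorbed or bounded. The material-derivative term is rewritten via $\frac{d}{dt}\int_\Omega\rho u\cdot\varphi\,dx - \int_\Omega\rho u\cdot\varphi_t\,dx - \int_\Omega\rho u\cdot(u\cdot\nabla)\varphi\,dx$; the crucial observation is that the continuity equation yields $\partial_t\mathcal{B}(\rho-\rho_s)=-\mathcal{B}(\mathrm{div}(\rho u))$, and by Lemma \ref{L25}(ii) this is controlled in $L^2$ by $\|\rho u\|_{L^2}$, which is uniformly bounded by Proposition \ref{L31}. The convective and viscous contributions, as well as the Navier-slip boundary term, are estimated using the $W^{1,p}$ bound on $\varphi$, the energy identity \eqref{S301}, the uniform $L^p$ estimate of $\rho$ from Corollary \ref{L33}, and Lemma \ref{L22}. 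After choosing $p$ suitably large (which is possible since $\|\rho\|_{L^{\alpha\beta+1}}$ is uniformly bounded for any $\alpha<\infty$), each remainder term absorbs into $\varepsilon\int_\Omega(\rho+1)^{\gamma-1}(\rho-\rho_s)^2\,dx + C A_1^2(t)(1+\cdots)$. Integrating over $(0,T)$ then yields, uniformly in $T$,
\begin{equation*}
\int_0^T\!\!\int_\Omega(\rho+1)^{\gamma-1}(\rho-\rho_s)^2\,dx\,dt\le C.
\end{equation*}

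With this in hand, set $\eta(t):=\int_\Omega(\rho-\rho_s)^2\,dx$. The weighted integrability above together with the uniform $L^p$-bound of $\rho$ gives $\int_0^\infty\eta(t)\,dt<\infty$. Differentiating and using $\eqref{S101}_1$ to express $\rho_t=-\mathrm{div}(\rho u)$ one obtains, after an integration by parts and Hölder, $|\eta'(t)|\le C(1+\|\nabla u\|_{L^2}^2)$, whose $t$-integral is finite by \eqref{S301}. A standard lemma (if $\eta\in L^1(0,\infty)$ and $\eta'\in L^1(0,\infty)$, then $\eta(t)\to 0$) then forces $\|\rho-\rho_s\|_{L^2}\to 0$. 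Finally, for arbitrary $1\le p<\infty$, interpolating between $L^2$ and the uniformly controlled $L^{\alpha\beta+1}$ norm (Corollary \ref{L33}, with $\alpha$ chosen so that $\alpha\beta+1>p$) delivers $\|\rho-\rho_s\|_{L^p}\to 0$ as claimed.

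The main technical obstacle will be Step 2/3: bookkeeping the many remainder terms generated by $\partial_t\varphi$, the convective term $\rho u\cdot(u\cdot\nabla)\varphi$, and the piece $-\int_\Omega P\,\nabla\rho_s^{-1}\cdot\mathcal{B}(\rho-\rho_s)\,dx$ left over from integration by parts, and verifying that each can be swallowed by the weighted damping plus an $A_1^2(t)$-multiple of an already-controlled quantity. The weight $(\rho+1)^{\gamma-1}$ (rather than just $(\rho-\rho_s)^2$) is exactly what allows the nonlinear pressure terms with high powers of $\rho$ to be absorbed when $\rho$ is large, and the uniform $L^p$ estimates of Corollary \ref{L33} are what make the absorption lossless in $T$.
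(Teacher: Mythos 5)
Your route has a genuine gap, and it is a circularity. Testing \eqref{S101}$_2$ against $\rho_s^{-1}\mathcal{B}(\rho-\rho_s)$ and extracting the weighted damping $\int_\Omega(\rho+1)^{\gamma-1}(\rho-\rho_s)^2dx$ is exactly what the paper does in Proposition \ref{L35}, i.e.\ \emph{after} Proposition \ref{L34} is proved, and the order matters. The critical remainder is the one you describe as ``the piece left over from integration by parts'': after using $\nabla\rho_s^\gamma=\rho_s\nabla f$ it takes the form $\int_\Omega\nabla\rho_s^{-1}\cdot H(\rho,\rho_s)\,\mathcal{B}(\rho-\rho_s)\,dx$ with $H(\rho,\rho_s)=\rho^\gamma-\rho_s^\gamma-\gamma\rho_s^{\gamma-1}(\rho-\rho_s)$, see \eqref{CC332}. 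Since $0\le H(\rho,\rho_s)\le C(\rho+1)^{\gamma-1}(\rho-\rho_s)^2$, this term is of exactly the same size as the damping, multiplied by $\|\mathcal{B}(\rho-\rho_s)\|_{L^\infty}\le C\|\rho-\rho_s\|_{L^4}$; it is \emph{not} an $A_1^2$-multiple of an already-controlled quantity, and it can be absorbed into the damping only when $\|\rho-\rho_s\|_{L^4}$ is small, which is precisely how the paper argues in \eqref{CC330}--\eqref{CC331}. For large data and large force no such smallness is available a priori: it is the conclusion \eqref{S334} you are trying to prove (for small or zero $\nabla f$ the factor $\nabla\rho_s^{-1}$ would be small or vanish, but that is not the setting here). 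The paper therefore proves Proposition \ref{L34} first, with no rate, by a compactness argument in the spirit of \cite{HLX}: shift in time by $\tau_n\to\infty$, use $\int_0^\infty A_1^2\,dt\le C$ from \eqref{S301} to make the momentum, convective and viscous contributions vanish on $\Omega\times(0,1)$, apply the div-curl lemma to the weak limits of $\rho_n$, $P_n$, $P_n^2$ to get strong convergence via strict convexity of $P$, identify the limit with $\rho_s$ through the uniqueness of the steady state (Lemma \ref{L11}), and only then use the resulting smallness to run the $\mathcal{B}(\rho-\rho_s)$ energy argument of Proposition \ref{L35}.

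There is also a secondary flaw in your final step. With $\eta(t)=\|\rho-\rho_s\|_{L^2}^2$ one computes $\eta'=-\int_\Omega\rho^2\mathrm{div}u\,dx-2\int_\Omega\rho u\cdot\nabla\rho_s\,dx$, so the best available bound is $|\eta'(t)|\le C\|\nabla u\|_{L^2}$, which is square-integrable but not integrable on $(0,\infty)$; the lemma ``$\eta\in L^1$ and $\eta'\in L^1$ imply $\eta\to0$'' is therefore not applicable (and your written bound $C(1+\|\nabla u\|_{L^2}^2)$ is certainly not in $L^1(0,\infty)$ because of the constant). The paper circumvents this by the averaging identities \eqref{CC320} and \eqref{CC326} over $[t,t+1]$, which upgrade the time-averaged convergence of Step 1 to convergence at every time using only $\int_t^{t+1}\|\rho u\|_{L^1}^2\,d\tau\to0$ and $\int_t^{t+1}\int_\Omega\rho^2|\mathrm{div}u|\,dx\,d\tau\to0$, i.e.\ weak convergence plus convergence of the $L^2$ norms. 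Your interpolation to general $p$ via Corollary \ref{L33} is fine once the $L^2$ convergence is in hand.
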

For the sake of completeness, we sketch the proof here.
\begin{proof}
According to Corollary \ref{L33}, once we establish \eqref{S334} for any $p=2$, Proposition \ref{L34} follows by interpolation. Thus, we will show that
$$\lim_{t\rightarrow\infty}\|\rho-\rho_s\|_{L^2}=0.$$

\textit{Step 1.}
Let us first show the convergence on the interval $t\in[\tau,\tau+1]$, $\tau\in[0,\infty)$. Precisely, for any $1\leq p<\infty$, we have
\begin{equation}\label{CC317}
\lim_{\tau\rightarrow\infty}\|\rho-\rho_s\|_{L^p(\Omega\times(\tau,\tau+1))}=0.
\end{equation}

In particular, suppose that $\{\tau_n\}_{n=1}^\infty$ is any positive subsequence tending to $\infty$. We introduce $\rho_n=\rho(x,t+\tau_n),\ u_n=u(x,t+\tau_n)$, $P_n=P(\rho_n)$ and restrict the problem on $\Omega\times[0,1]$. By virtue of Proposition \ref{L33} and Sobolev embedding theorem, for any $1\leq p<\infty$, we declare that
\begin{equation}\label{CC316}
\|\rho_n u_n\|^2_{L^p}\leq C\left(\sup_{0\leq t\leq 1}\|\rho_n\|^2_{L^{2p}}\right)\cdot\|u_n\|^2_{L^{2p}}\leq C\|\nabla u_n\|^2_{L^2}.
\end{equation}
In view of Proposition \ref{L31}, we also have
$$\int_0^\infty\|\nabla u\|_{L^2}^2dt\leq C,$$
thus the absolutely continuity of integral together with \eqref{CC316} leads to
\begin{equation}\label{S343}
\lim_{n\rightarrow\infty}\int_{0}^{1}\left(\|\rho_n u_n\|^2_{L^p}+\|\nabla u_n\|_{L^2}^2\right)dt
=0.
\end{equation}

According to Proposition \ref{L33}, up to some subsequence, we may assume
\begin{equation}\label{S344}
\begin{split}
\rho_n&\rightharpoonup\hat{\rho}\  \textit{in}\ L^4(\Omega\times(0,1)),\\
P_n&\rightharpoonup\hat{P}\ \textit{in}\ L^4(\Omega\times(0,1)),\\
P_n^2&\rightharpoonup\hat{P^2}\ \textit{in}\ L^2(\Omega\times(0,1)).
\end{split}
\end{equation}
Next we show that the convergence in \eqref{S344} is actually in strong sense by the same method as in \cite{HLX}.

Let us consider a sequence of vector functions $\{Q_n\}_{n=1}^\infty$ on $(0,1)\times\Omega\subset\mathbb{R}^3$:
\begin{equation*}
Q_n(t, x_1,x_2)=\bigg(P_n,0,0\bigg).
\end{equation*}

We check that $\mathrm{div}Q_n=\partial_t P_n$ in $\mathbb{R}^3$.
Thus by virtue of \eqref{S101}$_1$, we argue that
\begin{equation*}
\mathrm{div}Q_n=\partial_tP_n=-\mathrm{div}(P_nu_n)-(\gamma-1)P_n\mathrm{div}u_n,
\end{equation*}
which together with \eqref{S327} and \eqref{S343} gives
\begin{equation}\label{S346}
\mathrm{div}Q_n \rightarrow 0\ \textit{ in $W^{-1,3}(\Omega\times(0,1))$}.
\end{equation}

Similarly, we check that $\mathrm{rot}Q_n=\bigg(0,\nabla P_n\bigg)$ in $\mathbb{R}^3$. Note that \eqref{S101}$_2$ gives
$$\mathrm{rot}Q_n=-\partial_t(\rho_nu_n)-\mathrm{div}(\rho_n u_n\otimes u_n)+\nabla(\mu+\lambda)\mathrm{div}u_n+\mu\Delta u_n+\rho_n\nabla f,$$
which together with \eqref{S327} and \eqref{S343} also leads to
\begin{equation}\label{S347}
\mathrm{rot}Q_n\ \textit{is precompact in $W^{-1,\frac{3}{2}}(\Omega\times(0,1))$}.
\end{equation}

Combining \eqref{S344}--\eqref{S347}, we apply the div-curl lemma due to Murat \cite{Mu} and Tartar \cite{Tar} (see also Zhou \cite{Zho}) to deduce that
\begin{equation}\label{S348}
\hat{P}\cdot\hat{P}=(\hat{P})^2=\hat{P^2}.
\end{equation}

However, recalling that $P(x)=x^\gamma$ is strictly convex, hence \eqref{S348} together with \eqref{S344} and \eqref{S327} guarantees that, for any $1\leq p<\infty$,
\begin{equation}\label{CC318}
\lim_{n\rightarrow\infty}\left(\|\rho_n-\hat{\rho}\|_{L^{p}(\Omega\times(0,1))}+\|P_n-(\hat{\rho})^\gamma\|_{L^p(\Omega\times(0,1))}\right)=0.
\end{equation}

With the help of \eqref{CC318} and \eqref{S343}, we pass to the limit in \eqref{S101}$_1$ and deduce that in the sense of distribution,
$$\partial_t\hat{\rho}=0\ \mathrm{in}\ \Omega\times[0,1],$$
which means $\hat{\rho}=\hat{\rho}(x)$ is independent with $t$.

Similarly, by passing to the limit in \eqref{S101}$_2$, we apply \eqref{CC318} and \eqref{S343} to obtain the stationary system
\begin{equation*}
\begin{cases}
\nabla(\hat{\rho})^\gamma=\hat{\rho}\nabla f\ \mathrm{in}\ \Omega,\\
\int_\Omega\hat{\rho}dx=\int_\Omega\rho_0dx,
\end{cases}
\end{equation*}
where the last line is due to the conservation of mass.
Now the uniqueness assertion in Lemma \ref{L11} enforces  $\hat\rho=\rho_s$.

In conclusion, for any subsequence $\{\rho_n\}$, there is a further convergent subsequence (still denoted by $\{\rho_n\}$) converging to the unique limit $\rho_s$ in the sense of
$$\lim_{n\rightarrow\infty}\|\rho_n-\rho_s\|_{L^p}=0,$$
for any $1\leq p<\infty$, which is equivalent to \eqref{CC317} and finishes the first step.
\\

\textit{Step 2.} Then we establish the point-wise convergence, $$\lim_{t\rightarrow\infty}\|\rho-\rho_s\|_{L^2}=0.$$

Let us first derive that
\begin{equation}\label{CC321}
\rho\rightharpoonup\rho_s\ \mathrm{in}\ L^2(\Omega)\ \mathrm{as}\ t\rightarrow\infty.
\end{equation}
In fact, for any $t\in[0,\infty),\ s\in[0,1]$ and test function $\phi(x)\in C_0^\infty(\Omega)$, we integrate \eqref{S101}$_1$ over $[t,t+s]$  to deduce that
\begin{equation}\label{CC319}
\int_\Omega\rho\cdot\phi dx\, (t)=\int_\Omega\rho\cdot\phi dx\, (t+s)-\int_t^{t+s}\int_\Omega\rho u\cdot\nabla\phi dxd\tau.
\end{equation}

Integrating \eqref{CC319} over $[0,1]$ with respect to $s$, we arrive at
\begin{equation}\label{CC320}
\int_\Omega\rho\cdot\phi dx(t)=\int_t^{t+1}\left(\int_\Omega\rho\cdot\phi dx\right) d\tau-
\int_t^{t+1}(t+1-\tau)\cdot\left(\int_\Omega\rho u\cdot\nabla\phi dx\right)d\tau,
\end{equation}
where we have applied the following fact due to Fubini's theorem that
\begin{equation}\label{CC325}
\int_0^1\left(\int_t^{t+s}g(\tau)d\tau\right) ds=\int_t^{t+1}(t+1-\tau)\cdot g(\tau)d\tau.
\end{equation}

Observe that
\begin{equation*}\int_t^{t+1}(t+1-\tau)\cdot\left(\int_\Omega\rho u\cdot\nabla\phi dx\right)d\tau
\leq C\int_{t}^{t+1}\|\rho u\|_{L^1}^2dt.
\end{equation*}
Thus, letting $t\rightarrow \infty$ in \eqref{CC320}, we deduce from \eqref{CC317} and \eqref{S343} that
\begin{equation}\label{CC374}
\begin{split}
&\lim_{t\rightarrow\infty}\int_t^{t+1}\int_\Omega\rho\cdot\phi dxd\tau=\int_0^{1}\int_\Omega\rho_s\cdot\phi dxd\tau=\int_\Omega\rho_s\cdot\phi dx,\\
&\lim_{t\rightarrow\infty}\int_t^{t+1}(t+1-\tau)\cdot\left(\int_\Omega\rho u\cdot\nabla\phi dx\right)d\tau=0.
\end{split}
\end{equation}
Consequently, we derive that
$$\lim_{t\rightarrow\infty}\int_\Omega\rho\cdot\phi dx=\int_\Omega\rho_s\cdot\phi dx,$$
which implies \eqref{CC321} holds.

Similar process also leads to
\begin{equation}\label{CC322}
\lim_{t\rightarrow\infty}\int_\Omega\rho^2dx=\int_\Omega\rho_s^2dx.
\end{equation}

In fact, \eqref{S101}$_1$ gives us
\begin{equation}\label{CC323}
\partial_t\rho^2+\mathrm{div}(\rho^2 u)=-\rho^2\mathrm{div}u.
\end{equation}
For any $t\in[0,\infty),\ s\in[0,1]$, we integrate \eqref{CC323} over $[t,t+s]$ to deduce that
\begin{equation}\label{CC324}
\int_\Omega\rho^2 dx\,(t)=\int_\Omega\rho^2 dx\,(t+s)+\int_t^{t+s}\int_\Omega\rho^2\mathrm{div}u\ dxd\tau.
\end{equation}

With the help of \eqref{CC325}, we integrate \eqref{CC324} over $[0,1]$ with respect to $s$ and derive 
\begin{equation}\label{CC326}
\int_\Omega\rho^2 dx(t)=\int_t^{t+1}\int_\Omega\rho^2 dxd\tau+
\int_t^{t+1}(t+1-\tau)\cdot\left(\int_\Omega\rho^2\mathrm{div}u\  dx\right)d\tau.
\end{equation}
Letting $t\rightarrow\infty$ in \eqref{CC326}, by the same arguments as in \eqref{CC374}, we apply \eqref{CC317} and \eqref{S343} to argue that \eqref{CC322} holds.

Combining \eqref{CC321} with \eqref{CC322}, we deduce that
$$\lim_{t\rightarrow\infty}\|\rho-\rho_s\|_{L^2}=0,$$
and finish the proof of Proposition \ref{L34}.
\end{proof}

\begin{remark}
In light of \eqref{S334}, without lost of generality, for some $\delta>0$ determined later, we may assume
\begin{equation}\label{CC330}
\|\rho-\rho_s\|_{L^4}\leq\delta,
\end{equation}
in the rest of paper.
\end{remark}

Proposition \ref{L34} makes sure the density converging to $\rho_s$, however the rate of decay is still unknown. Next proposition provides a crude estimate of it in terms of global $L^2$ integrability of $(\rho-\rho_s)$. For later application, we introduce the energy term $A_2$,
\begin{equation}\label{CC343}
\begin{split}
A_2^2(t)&\triangleq\int_\Omega(\rho+1)^{\gamma-1}(\rho-\rho_s)^2dx,\\
\end{split}
\end{equation}
then,  the conclusion in term of $A_2$ can be stated as follows:
\begin{proposition}\label{L35}
Under the condition of Proposition \ref{L34}, there is a constant $C$ depending on $\mu$, $\beta$, $\gamma$, $\rho_0$, $u_0$, and $\Omega$, such that
\begin{equation}\label{S335}
\int_0^TA_2^2(t)\, dt\leq C.
\end{equation}
\end{proposition}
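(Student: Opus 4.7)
The plan is to multiply the momentum equation $(\ref{S101})_2$ by the Bogovskii test function $\phi=\mathcal{B}(\rho-\rho_s)$ provided by Lemma \ref{L25}, which is well defined since mass conservation and $(\ref{CC201})$ give $\int_\Omega(\rho-\rho_s)\,dx=0$. Using $\mathrm{div}\,\phi=\rho-\rho_s$ and $\phi|_{\partial\Omega}=0$, integrating by parts in the rewritten form $\rho\dot u-\nabla((2\mu+\lambda)\mathrm{div}\,u)-\mu\nabla^{\bot}\omega+\nabla P=\rho\nabla f$, and invoking the steady-state identity $\nabla P_s=\rho_s\nabla f$, I expect the identity
\[
\int_\Omega(P-P_s)(\rho-\rho_s)\,dx=\int_\Omega\rho\dot u\cdot\phi\,dx+\int_\Omega(2\mu+\lambda)\,\mathrm{div}\,u\,(\rho-\rho_s)\,dx+\mu\int_\Omega\omega\,\mathrm{rot}\,\phi\,dx-\int_\Omega(\rho-\rho_s)\nabla f\cdot\phi\,dx.
\]
The left-hand side is bounded below by $cA_2^2$: the elementary convexity estimate $(\rho^\gamma-\rho_s^\gamma)(\rho-\rho_s)\geq c_0(\rho+\rho_s)^{\gamma-1}(\rho-\rho_s)^2$, combined with $\rho_s$ smooth and bounded away from $0$ by Lemma \ref{L11}, yields $(\rho+\rho_s)^{\gamma-1}\geq c'(\rho+1)^{\gamma-1}$; this is precisely the role of $\gamma>1$ in supplying the weight $(\rho+1)^{\gamma-1}$ appearing in $A_2^2$.

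For the material derivative I would use mass conservation to write
\[
\int_\Omega\rho\dot u\cdot\phi\,dx=\frac{d}{dt}\int_\Omega\rho u\cdot\phi\,dx-\int_\Omega\rho u\cdot\phi_t\,dx-\int_\Omega\rho u\otimes u:\nabla\phi\,dx,
\]
where $\phi_t=-\mathcal{B}(\mathrm{div}(\rho u))$ is controlled by Lemma \ref{L25}(ii) via $\|\phi_t\|_{L^2}\leq C\|\rho u\|_{L^2}\leq C\|\nabla u\|_{L^2}$, and $\|\phi\|_{W^{1,2}}\leq C\|\rho-\rho_s\|_{L^2}$. The values of $\int_\Omega\rho u\cdot\phi\,dx$ at $t=0$ and $t=T$ are uniformly bounded thanks to Proposition \ref{L31} and Corollary \ref{L33}. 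The viscosity, vorticity, and convective terms are each estimated by Cauchy-Schwarz and Young's inequality, producing $\epsilon A_2^2+CA_1^2$ plus quantities integrable in time; in particular the factor $\lambda(\rho)=\rho^\beta$ is split as $\sqrt{\rho^\beta}\,\mathrm{div}\,u\cdot\sqrt{\rho^\beta}(\rho-\rho_s)$, with $\int_\Omega\rho^\beta(\mathrm{div}\,u)^2\,dx\leq A_1^2$ and $\int_\Omega\rho^\beta(\rho-\rho_s)^2\,dx$ bounded by Hölder interpolation using the uniform $L^p$ bounds from Corollary \ref{L33} and the smallness $\|\rho-\rho_s\|_{L^p}\to 0$ from Proposition \ref{L34} (in the form (\ref{CC330})).

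After integrating over $(0,T)$, using $\int_0^T A_1^2\,dt\leq C$ from Proposition \ref{L31} and choosing $\epsilon$ small enough to absorb $\epsilon\int_0^T A_2^2\,dt$ on the left, I expect to conclude $\int_0^T A_2^2\,dt\leq C$. I expect the main obstacle to be the external-force term $\int_\Omega(\rho-\rho_s)\nabla f\cdot\phi\,dx$: the direct bound $\|\nabla f\|_{L^\infty}\|\rho-\rho_s\|_{L^2}\|\phi\|_{L^2}\leq C\|\rho-\rho_s\|_{L^2}^2\leq CA_2^2$ carries a constant $C$ that is not small and so cannot be absorbed by $cA_2^2$ alone. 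As signalled in the introductory overview, one circumvents this by refining the test function to $\rho_s^{-1}\mathcal{B}(\rho-\rho_s)$: after Taylor-expanding $P$ around $\rho_s$ to second order, the linear-in-$(\rho-\rho_s)$ pieces of the pressure and external-force terms cancel, leaving only a quadratic remainder which, together with the smallness $\|\rho-\rho_s\|_{L^p}\leq\delta$ from (\ref{CC330}), can be controlled by $\epsilon A_2^2+CA_1^2$ and absorbed.
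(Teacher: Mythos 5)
Your route is essentially the paper's: test the momentum equation with $\rho_s^{-1}\mathcal{B}(\rho-\rho_s)$, use the steady-state relation to reduce the pressure and force terms to the second-order Taylor remainder $H(\rho,\rho_s)=\rho^\gamma-\rho_s^\gamma-\gamma\rho_s^{\gamma-1}(\rho-\rho_s)$, control that remainder by $\delta A_2^2$ through \eqref{CC330} and the $L^\infty$ bound on $\mathcal{B}(\rho-\rho_s)$, keep the material-derivative contribution as a total time derivative of a uniformly bounded functional, and integrate in time against $\int_0^TA_1^2\,dt\le C$. The lower bound $cA_2^2$ for the principal term and the role of $\gamma>1$ are also identified correctly.

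There is, however, a genuine gap in your treatment of the viscous term carrying $\lambda(\rho)=\rho^\beta$. Your Cauchy--Schwarz splitting leaves the factor $\bigl(\int_\Omega\rho^\beta(\rho-\rho_s)^2dx\bigr)^{1/2}$, and you propose to bound $\int_\Omega\rho^\beta(\rho-\rho_s)^2dx$ by H\"older interpolation using the uniform $L^p$ bounds of Corollary \ref{L33} together with the smallness \eqref{CC330}. When $\beta>\gamma-1$ this integral is not dominated pointwise by $(\rho+1)^{\gamma-1}(\rho-\rho_s)^2$, so what this interpolation actually yields is only a constant bound $C\delta^2$ at each fixed time; such a constant can neither be absorbed into $\varepsilon A_2^2$ nor is it integrable in time, and after integrating you would obtain $\int_0^TA_2^2\,dt\le C+C\delta^2T$, which is not the uniform estimate \eqref{S335}. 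Appealing instead to the convergence $\|\rho-\rho_s\|_{L^4}\to0$ of Proposition \ref{L34} to claim time-integrability would be circular, since no decay rate is available before Proposition \ref{L35} itself. The paper closes exactly this term (its $J_4$, see \eqref{CC338}--\eqref{CC341}) by splitting $\rho^\beta\le C\bigl((\rho-M_1)_+^\beta+M_1^\beta\bigr)$ and invoking the space-time damping estimate $\int_0^T\int_\Omega(\rho-M_1)_+^{q}\,dxdt\le C$ with $q=2\beta(\gamma+1)/(\gamma-1)$, i.e.\ the second half of \eqref{S327}, together with the pointwise inequality $|\rho-\rho_s|^{\gamma+1}\le C(\rho+1)^{\gamma-1}(\rho-\rho_s)^2$ so that the $\varepsilon$-piece is genuinely $\varepsilon A_2^2$; once you replace your interpolation step by this truncation argument, your proof closes.
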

\begin{proof}
Let us rewrite  \eqref{S101}$_2$ as
\begin{equation}\label{CC327}
\nabla(\rho^\gamma-\rho_s^\gamma)+(1-\rho\cdot\rho_s^{-1})\nabla\rho_s^\gamma=R,
\end{equation}
where the remainder $R$ is given by
$$R=-\frac{\partial}{\partial t}(\rho u)-\mathrm{div}(\rho u\otimes u)+\mu\Delta u+\nabla\big((\mu+\lambda)\mathrm{div}u\big)=\sum_{i=1}^4R_i.$$
The left side of \eqref{CC327} is the principal term, we apply the idea in \cite{HLX} to handle it.

In view of Lemma \ref{L25}, $\mathcal{B}(\rho-\rho_s)$ is well defined.
Multiplying \eqref{CC327} by $\mathcal{B}(\rho-\rho_s)\cdot\rho_s^{-1}$ and integrating over $\Omega$ yield
\begin{equation}\label{CC328}
\int_\Omega\bigg(\rho_s^{-1}\nabla(\rho^\gamma-\rho_s^\gamma)+(\rho_s^{-1}-\rho\cdot\rho_s^{-2})\nabla\rho_s^\gamma\bigg)\cdot\mathcal{B}(\rho-\rho_s)dx=\int_\Omega R\cdot \mathcal{B}(\rho-\rho_s)\rho_s^{-1}dx.
\end{equation}
\\

\textit{Estimates of the principal term.}
We check the left side of \eqref{CC328}.
In fact, integration by parts reduces it to
\begin{equation}\label{CC332}
\begin{split}
\int_\Omega\rho_s^{-1}(\rho^\gamma-\rho_s^\gamma)(\rho-\rho_s)dx+\int_\Omega\nabla\rho_s^{-1}\cdot
H(\rho,\rho_s)\cdot\mathcal{B}(\rho-\rho_s)dx,
\end{split}
\end{equation}
where $H(\rho,\rho_s)=\rho^\gamma-\rho_s^{\gamma}-\gamma\rho_s^{\gamma-1}
(\rho-\rho_s).$

The second term in \eqref{CC332} requires detailed analysis. Note that Taylor's expansion gives us
$$H(\rho,\rho_s)=(\gamma^2-\gamma)\bigg(\int_{Q}
\sigma\big(\xi\sigma\rho+(1-\xi\sigma)\rho_s\big)^{\gamma-2}d\xi\, d\sigma\bigg)(\rho-\rho_s)^2,$$
where $Q=[0,1]\times[0,1]$. Let us check the term in the largest bracket.

When $\gamma<2$,
we mention that $\inf\rho_s>0$ and $\gamma>1$ ensure 
\begin{equation}\label{S338}
\begin{split}
&\int_{Q}\sigma
(\xi\sigma\rho+(1-\xi\sigma)\rho_s)^{\gamma-2}\,d\xi d\sigma\\
&\leq C\int_{Q}
(1-\xi\sigma)^{\gamma-2}d\xi d\sigma\\
&=C\int_0^1\sigma^{-1}\left(\int_0^\sigma(1-\xi)^{\gamma-2}d\xi\right)d\sigma\\
&=C\int_0^1 \frac{1-(1-\sigma)^{\gamma-1}}{(\gamma-1)\sigma}d\sigma\leq C.
\end{split}
\end{equation}
Observe that $\frac{1-(1-\sigma)^{\gamma-1}}{(\gamma-1)\sigma}\rightarrow 1$ as $\sigma\rightarrow 0$.

When $\gamma\geq 2$,
we check directly that
\begin{equation}\label{CC329}
\int_Q
(\xi\sigma\rho+(1-\xi\sigma)\rho_s)^{\gamma-2}d\xi d\sigma\leq C(\rho+1)^{\gamma-2}\leq C(\rho+1)^{\gamma-1}.
\end{equation}

Combining \eqref{S338} with \eqref{CC329}, we declare that
$$0\leq H(\rho,\rho_s)\leq C(\rho+1)^{\gamma-1}(\rho-\rho_s)^2.$$
Moreover, Sobolev embedding theorem, Lemma \ref{L25}, and \eqref{CC330} guarantee
$$\|\mathcal{B}(\rho-\rho_s)\|_{L^\infty}\leq C\|\nabla \mathcal{B}(\rho-\rho_s)\|_{L^4}\leq C\|\rho-\rho_s\|_{L^4}\leq \delta.$$
Consequently, the second term in \eqref{CC332} satisfies
\begin{equation}\label{CC331}
\begin{split}
&\int_\Omega\nabla\rho_s^{-1}\cdot
H(\rho,\rho_s)\cdot\mathcal{B}(\rho-\rho_s)dx\leq C\big\|\mathcal{B}(\rho-\rho_s)\big\|_{L^\infty}A_2^2\leq \delta A_2^2.
\end{split}
\end{equation}

In contrast, the first term in \eqref{CC332} is directly given by
\begin{equation}\label{CC333}
\int_\Omega\rho_s^{-1}(\rho^\gamma-\rho_s^\gamma)(\rho-\rho_s) dx\geq CA_2^2.
\end{equation}
Substituting \eqref{CC331} and \eqref{CC333} into \eqref{CC328}, we arrive at
\begin{equation}\label{CC342}
A_2^2\leq C\int_\Omega R\cdot\mathcal{B}(\rho-\rho_s)\rho_s^{-1}dx.
\end{equation}
\\

\textit{Estimates of the remaining terms.}
The method is rather routine, we check each terms in details. Note that
$$\int_\Omega R\cdot\mathcal{B}(\rho-\rho_s)\rho_s^{-1}dx=\sum_{i=1}^4\int_\Omega R_i\cdot\mathcal{B}(\rho-\rho_s)\rho_s^{-1}dx=\sum_{k=1}^4J_k.$$

First, we introduce
$$D(t)=\int_\Omega \rho u\cdot\mathcal{B}(\rho_s-\rho)\cdot\rho_s^{-1}dx.$$
Observe that \eqref{S327} and Lemma \ref{L25} ensure
\begin{equation}\label{S337}
\left|D(t)\right|
\leq C\|\rho^{\frac{1}{2}}u\|_{L^2}\cdot\|\rho^{\frac{1}{2}}\|_{L^4}\cdot\|\mathcal{B}(\rho-\rho_s)\|_{L^4}\leq C.
\end{equation}
Hence, by virtue of \eqref{S101}$_1$, \eqref{S327}, and Lemma \ref{L25}, we declare
\begin{equation}\label{CC339}
\begin{split}
J_1
&=D(t)'+\int_\Omega\rho u\cdot\mathcal{B}(\mathrm{div}(\rho u))\cdot\rho_s^{-1}dx\\
&\leq D(t)'+C\|\rho u\|_{L^2}^2\leq D(t)'+CA_1^2,
\\
\end{split}
\end{equation}
where $A_1$ is given by \eqref{SS401}.

Similarly, in view of Lemma \ref{L25}, we declare that, for any $1<p<\infty$,
\begin{equation}\label{CC334}
\|\nabla\big(\mathcal{B}(\rho-\rho_s)\cdot\rho_s^{-1}\big)\|_{L^p}\leq C(p)\|\rho-\rho_s\|_{L^p}.
\end{equation}
Thus, we apply \eqref{S327}, Lemma \ref{L25} and set $p=4,2$ in \eqref{CC334} to give
\begin{equation}\label{CC340}
\begin{split}
J_2
&=\int_\Omega\rho u\otimes u\cdot\nabla\big(\mathcal{B}(\rho-\rho_s)\rho_s^{-1}\big)dx\leq C\|\rho\|_{L^4}^2\cdot\|u\|_{L^4}^2\leq CA_1^2,\\
J_3&=\mu\int_\Omega\nabla u\cdot\nabla\big(\mathcal{B}(\rho_s-\rho)\rho_s^{-1}\big)dx\leq C\|\nabla u\|_{L^2}\|\rho-\rho_s\|_{L^2}\leq CA_1^2+\varepsilon A_2^2.
\end{split}
\end{equation}

The estimate of $J_4$ requires more efforts. Note that
\begin{equation}\label{CC338}
\begin{split}
J_4&=\int_\Omega(\mu+\lambda)\cdot\mathrm{div}u\cdot\mathrm{div}\big(\mathcal{B}(\rho_s-\rho)\cdot\rho_s^{-1}\big)dx.\\
&\leq C\int_\Omega\bigg((\rho-M_1)_+^\beta+M_1^\beta\bigg)\cdot|\nabla u|\cdot|\nabla\big(\mathcal{B}(\rho_s-\rho)\cdot\rho_s^{-1}\big)|dx,
\end{split}
\end{equation}
where $M_1$ is given by Corollary \ref{L33}.

In particular, by choosing $p=\gamma+1>2$ in \eqref{CC334} and $q=2\beta(\gamma+1)/(\gamma-1)$, we calculate that
\begin{equation*}
\begin{split}
&\int_\Omega(\rho-M_1)_+^\beta\cdot|\nabla u|\cdot|\nabla\big(\mathcal{B}(\rho_s-\rho)\cdot\rho_s^{-1}\big)|dx\\
&\leq C\int_\Omega(\rho-M_1)_+^{q}dx+C\int_\Omega|\nabla u|^2dx+\varepsilon\int_\Omega|\rho-\rho_s|^{\gamma+1}dx\\
&\leq C\int_\Omega(\rho-M_1)_+^{q}dx+
CA_1^2+\varepsilon A_2^2,\\
\end{split}
\end{equation*}
which combined with \eqref{CC338} ensures
\begin{equation}\label{CC341}
J_4\leq C\int_\Omega(\rho-M_1)_+^{q}dx+C
A_1^2+\varepsilon A_2^2.
\end{equation}

In conclusion, putting \eqref{CC339}, \eqref{CC340}, and \eqref{CC341} into \eqref{CC342}, we chose $\varepsilon<\frac{C}{100}$ to argue that
\begin{equation}\label{S336}
\begin{split}
A_2^2
&\leq C\left(D(t)'+A_1^2+\int_\Omega(\rho-M_1)_+^{q}dx\right).
\end{split}
\end{equation}

Integrating \eqref{S336} with respect to $t$ and using  Proposition \ref{L31}, Corollary \ref{L33}, and \eqref{S337} yield
\begin{equation*}
\int_0^TA_2^2(t)\, dt\leq C,
\end{equation*}
which gives \eqref{S335}.
\end{proof}
\subsection{Application II: Further energy estimates}\label{SEC4}
\quad To get  more accurate descriptions about the large time behavior of the system, some further energy estimates are required. Let us first introduce some  related energy terms.

For $A_1$ given by \eqref{SS401} and $A_2$ given by \eqref{CC343}, we set
\begin{equation*}
\begin{split}
A(t)&\triangleq A_1(t)+A_2(t).
\end{split}
\end{equation*}
Note that \eqref{S301} together with \eqref{S335} guarantees, for some $C$ independent with $T$,
\begin{equation}\label{CC344}
\sup_{0\leq t\leq T}A_2(t)+\int_0^TA^2(t)\, dt\leq C.
\end{equation}

In addition, we also consider
\begin{equation*}
\begin{split}
B^2(t)&\triangleq\int_\Omega\rho|\dot{u}|^2(x,t)\, dx,\\
R_T&\triangleq 1+\|\rho\|_{L^\infty(\Omega\times[0,T])}.
\end{split}
\end{equation*}

We start by a proposition concerning the extra integrability of the momentum $\rho u$. Compared to \cite[Lemma 3.7]{HL}, we establish a uniform version here.
\begin{proposition}\label{L41}
Under the same assumption of Theorem \ref{T11}, there exists a constant $C$ depending on $\Omega$, $\mu$, $\beta$, $\gamma$, $\|\rho_0\|_{L^{\infty}}$, and $\|u_0\|_{H^1}$ such that
\begin{equation}\label{S405}
\sup_{0\leq t\leq T}\int_\Omega\rho | u| ^{2+\nu}dx\leq C ,
\end{equation}
where we define
\begin{equation}\label{S406}
\nu\triangleq R_T^{-\frac{\beta}{2}}\nu_{0},
\end{equation}
for some suitably small generic constant $\nu_0\in (0,1)$ depending only on $\mu$ and $\Omega.$
\end{proposition}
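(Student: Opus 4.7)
The plan is to perform a weighted energy estimate: multiply the rewritten momentum equation \eqref{S303} by $(2+\nu)|u|^\nu u$ and integrate over $\Omega$, then use Gr\"onwall. The continuity equation $\eqref{S101}_1$ collapses the $\rho\dot{u}$ contribution exactly to $\frac{d}{dt}\int_\Omega\rho|u|^{2+\nu}\,dx$.

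For the viscous dissipation, integration by parts on $\mu\Delta u$ and $\nabla((\mu+\lambda)\mathrm{div}\,u)$ produces the principal nonnegative quadratic form $(2+\nu)\int_\Omega[\mu|\nabla u|^2 + (\mu+\lambda)(\mathrm{div}\,u)^2]|u|^\nu\,dx$, plus cross terms arising from $\nabla|u|^\nu = \nu|u|^{\nu-1}\nabla|u|$, and a boundary contribution. The Navier-slip condition \eqref{S105} supplies the favorable boundary term $(2+\nu)\int_{\partial\Omega}K|u\cdot n^\bot|^{2+\nu}\,dS \geq 0$ together with lower-order curvature corrections handled as in \cite{caili01}. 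The critical cross term is of the schematic form $\nu\int_\Omega(\mu+\lambda)|u|^{\nu-1}|\mathrm{div}\,u|\,|u|\,|\nabla|u||\,dx$. A weighted Cauchy-Schwarz bounds this by
\begin{equation*}
\tfrac{1}{2}\int_\Omega(\mu+\lambda)(\mathrm{div}\,u)^2|u|^\nu\,dx + \tfrac{\nu^2}{2}\int_\Omega(\mu+\lambda)|u|^\nu|\nabla|u||^2\,dx.
\end{equation*}
The first summand is absorbed into dissipation; for the second, using Corollary \ref{L42} and $\lambda \leq R_T^\beta$, we get a bound by $C\nu^2 R_T^\beta\int_\Omega|u|^\nu[(\mathrm{div}\,u)^2 + \omega^2]\,dx$. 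The definition $\nu = R_T^{-\beta/2}\nu_0$ yields $\nu^2 R_T^\beta = \nu_0^2$, which is absorbable once $\nu_0$ is chosen small depending on $\mu$ and $\Omega$.

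The pressure term, after integration by parts (using $u\cdot n|_{\partial\Omega}=0$), reads $-(2+\nu)\int_\Omega P\cdot\mathrm{div}(|u|^\nu u)\,dx$ and is controlled by Young against the $(\mu+\lambda)(\mathrm{div}\,u)^2|u|^\nu$ dissipation, leaving residual terms of the form $\int_\Omega\frac{P^2}{\mu+\lambda}|u|^\nu\,dx \leq C\int_\Omega(1+\rho^{2\gamma-\beta})|u|^\nu\,dx$. H\"older's inequality together with the uniform $L^p$ bounds on $\rho$ from Corollary \ref{L33} and the Sobolev estimate $\|u\|_{L^p} \leq Cp^{1/2}\|\nabla u\|_{L^2}$ from Lemma \ref{L22} bounds this by $CA_1^2(\int_\Omega\rho|u|^{2+\nu}\,dx + 1)$. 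The external force term is estimated by $\|\nabla f\|_{L^\infty}\int_\Omega\rho|u|^{\nu+1}\,dx \leq \varepsilon\int_\Omega\rho|u|^{2+\nu}\,dx + C$ using $f\in H^2\hookrightarrow W^{1,\infty}$.

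Collecting everything produces
\begin{equation*}
\frac{d}{dt}\int_\Omega\rho|u|^{2+\nu}\,dx \leq CA_1^2\left(\int_\Omega\rho|u|^{2+\nu}\,dx + 1\right),
\end{equation*}
and since $\int_0^T A_1^2\,dt \leq C$ by Proposition \ref{L31}, Gr\"onwall yields \eqref{S405} uniformly in $T$. The main technical obstacle is the bookkeeping for the sign-indefinite cross terms generated by $\nabla|u|^\nu$: one must verify that after applying Cauchy-Schwarz with the \emph{correct} weight, the dangerous factor carried by the bulk viscosity is $\nu^2 R_T^\beta$ rather than $\nu R_T^\beta$, which is what allows the milder choice $\nu = R_T^{-\beta/2}\nu_0$ (instead of $R_T^{-\beta}\nu_0$) to close the absorption.
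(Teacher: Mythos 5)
Your treatment of the time-derivative, the viscous dissipation, the boundary term, and the critical cross term is exactly the paper's mechanism: the dangerous coefficient is $\nu^2\lambda\lesssim\nu_0^2$ (equivalently $\nu\lambda\lesssim\nu_0(\rho^{\beta/2}+1)$), and Corollary \ref{L42} converts $|u|^\nu|\nabla u|^2$ back into the dissipation $\int|u|^\nu\big((1+\lambda)(\mathrm{div}u)^2+\omega^2\big)dx$, which is why $\nu=R_T^{-\beta/2}\nu_0$ suffices. However, there is a genuine gap in your handling of the pressure and external-force terms, and it is precisely the point of the proposition (uniformity in $T$). You keep $P$ and $\rho\nabla f$ as they stand; after Young's inequality your residual is of the form $\int_\Omega\big(1+\rho^{2\gamma-\beta}\big)|u|^\nu dx$, and for the force term you produce $\varepsilon\int_\Omega\rho|u|^{2+\nu}dx+C$. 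Neither of these is bounded by $CA_1^2\big(\int_\Omega\rho|u|^{2+\nu}dx+1\big)$: since $\rho\to\rho_s>0$, the factor $1+\rho^{2\gamma-\beta}$ does not decay, and by H\"older and Poincar\'e the best one gets is $\int_\Omega|u|^\nu dx\lesssim\|\nabla u\|_{L^2}^{\nu}=A_1^{\nu}$ with $\nu<2$, which is \emph{not} $O(A_1^2)$ as $A_1\to0$; its time integral grows like $T^{1-\nu/2}$. Likewise the bare constant $C$ from the force term integrates to $CT$, and the term $\varepsilon\int\rho|u|^{2+\nu}dx$ without a time-integrable prefactor only yields $e^{\varepsilon T}$ growth under Gr\"onwall. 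So the final differential inequality you claim does not follow from your intermediate bounds, and the argument only gives a $T$-dependent estimate. (A smaller issue: in 2D, $f\in H^2$ does not give $\nabla f\in L^\infty$, only $L^p$ for all finite $p$.)

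The paper closes this gap by first rewriting the source using the steady state: since $\nabla P_s=\rho_s\nabla f$, one has $\nabla P-\rho\nabla f=\nabla(P-P_s)-(\rho-\rho_s)\nabla f$, so every non-dissipative term carries a factor $|\rho-\rho_s|$ (schematically $K_2=\int(\rho^{\gamma-1}+|\nabla f|+1)|\rho-\rho_s|(|u|^{1+\nu}+|u|^\nu|\nabla u|)dx$). These are then bounded by $CA^2$ with $A^2=A_1^2+A_2^2$, where $A_2^2=\int(\rho+1)^{\gamma-1}(\rho-\rho_s)^2dx$ is globally integrable in time by Proposition \ref{L35}; the hypothesis $\gamma>1$ is used exactly to absorb the extra $\nu$-power via $|\rho-\rho_s|^{2/(1-\nu)}\le C(\rho+1)^{\gamma-1}(\rho-\rho_s)^2$ for $\nu<(\gamma-1)/4$. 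This yields $\frac{d}{dt}\int\rho|u|^{2+\nu}dx+\int|u|^\nu\big((1+\lambda)(\mathrm{div}u)^2+\omega^2\big)dx\le CA^2$, and a direct time integration (no Gr\"onwall on the unknown is needed) gives the $T$-uniform bound. To repair your proof you must incorporate this subtraction of the steady state and invoke Proposition \ref{L35}; without it the uniform-in-$T$ conclusion cannot be reached.
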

\begin{proof}
Following the proof of \cite[Lemma 3.7]{HL}, multiplying \eqref{S101}$_2$ by $(2+\nu)|u|^\nu u$  and integrating the resulting equality over $\Omega$, we arrive at
\begin{equation*}
\begin{split}
&\frac{d}{dt}\int_\Omega\rho |u|^{2+\nu}dx+ \int_\Omega|u|^\nu\bigg((1+\lambda)(\mathrm{div}u)^2+\omega^2\bigg)dx\leq C(K_1+K_2),\\
\end{split}
\end{equation*}
where the right side is given by
\begin{equation*}
\begin{split}
K_1&=\int_\Omega\nu\big(\lambda|\mathrm{div}u|+|\nabla u|\big)|u|^{\nu}|\nabla u|\, dx,\\
K_2&=\int_\Omega\bigg(\rho^{\gamma-1}+|\nabla f|+1\bigg)\cdot|\rho-\rho_s|\cdot\bigg(|u|^{\nu+1}+|u|^\nu|\nabla u|\bigg)\, dx.
\end{split}
\end{equation*}

First, note that \eqref{S406} ensures $\nu\lambda\leq \nu_0(\rho^\frac{\beta}{2}+1)$, hence we argue that
\begin{equation}\label{CC345}
\begin{split}
K_1&\leq\nu_0\int_\Omega|u| ^\nu\bigg(\lambda(\mathrm{div}u)^2 +|\nabla u|^2\bigg)dx\\
&\leq C\nu_0\int_\Omega|u|^\nu\bigg((1+\lambda)(\mathrm{div}u)^2+\omega^2\bigg)dx,
\end{split}
\end{equation}
where the last line is due to Corollary \ref{L42}.

Next, observe that if $0\leq \psi\in L^{2q}(\Omega)$, with $q=2/(1+\nu)$, we have
\begin{equation}\label{CC346}
\begin{split}
&\int_\Omega\psi\cdot|\rho-\rho_s|\cdot|u|^{\nu+1}dx\\
&\leq C\int_\Omega\left(|\rho-\rho_s|^{\frac{2}{1-\nu}}+\psi^{q}|u|^2\right)dx\\
&\leq C\big(1+\|\psi\|_{L^{2q}}^q\big)\cdot\int_\Omega\left(|\rho-\rho_s|^{\frac{2}{1-\nu}}+|\nabla u|^2\right)dx\\
&\leq CA^2,
\end{split}
\end{equation}
where in the last line, by choosing $\nu<(\gamma-1)/4$, we ensure that
$$|\rho-\rho_s|^{\frac{2}{1-\nu}}\leq(\rho+\rho_s)^{\frac{2\nu}{1-\nu}}|\rho-\rho_s|^2\leq C(\rho+1)^{\gamma-1}(\rho-\rho_s)^2.$$

Similar arguments also give
\begin{equation}\label{CC347}
\begin{split}
&\int_\Omega\psi\cdot|\rho-\rho_s|\cdot|u|^{\nu}|\nabla u|dx\leq CA^2.
\end{split}
\end{equation}

Now setting $\psi=(\rho^{\gamma-1}+|\nabla f|+1)$ in \eqref{CC346} and \eqref{CC347}, we arrive at
\begin{equation}\label{CC377}
K_2\leq CA^2.
\end{equation}

Combining \eqref{CC345} with \eqref{CC377}, by determining $\nu_0\leq (10C)^{-1}$, we argue that
\begin{equation*}
\begin{split}
&\frac{d}{dt}\int_\Omega\rho |u|^{2+\nu}dx+ \int_\Omega|u|^\nu\bigg((1+\lambda)(\mathrm{div}u)^2+\omega^2\bigg)dx\leq C A^2,\\
\end{split}
\end{equation*}
which together \eqref{CC344} yields \eqref{S405} and finishes the  proof of Proposition \ref{L41}.
\end{proof}

\begin{remark}
Note that the additional integrability guaranteed by \eqref{S405} is the key ingredient to get point-wise control of $G$, see Lemma \ref{L48}. Compared with \cite[Lemma 9]{FLL}, we improve it to the uniform version Lemma \ref{L41}.

We mention that the assumption $\gamma>1$ plays an important role in our calculation, because the extra order $(\gamma-1)$ makes it possible to apply H\"{o}lder's inequality to deal with the index $1+\nu$ in \eqref{CC346}.
\end{remark}

Next, we focus on the energy term $B(t)$. It lies in the central position of further arguments, since $B(t)$ controls $\|\nabla u\|_{L^p}$ especially for $p>2$.
\begin{proposition}\label{L43} Suppose $\kappa\triangleq 1-2/p\in(0,1)$, and $\varepsilon>0$, then there exists some positive constant $C$ depending only on $p$, $\varepsilon$, $\mu$, and $\Omega$ such that, for $t\in[0,T]$,
\begin{equation}\label{S412}
\|\nabla u\|_{L^{p} }\leq CR_T^{\kappa/2+\varepsilon}\cdot(1+A)^{1-\kappa}(1+A+B)^{\kappa}.
\end{equation}
In particular, when $\kappa\leq\varepsilon$ and $\gamma<2\beta$, we have
\begin{equation}\label{S413}
\|\nabla u\|_{L^{p}}\leq C R_T^{2\varepsilon}\cdot A^{1-\kappa}(1+A+B)^{\kappa}.
\end{equation}
\end{proposition}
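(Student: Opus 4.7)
The plan is to decompose $\nabla u$ via the div--curl inequality (Lemma \ref{L24}), express $\mathrm{div}u$ through the effective viscous flux $G=(2\mu+\lambda)\mathrm{div}u-(P-P_s)$, and bound the $L^p$ norms of $G$ and $\omega$ via Gagliardo--Nirenberg interpolation between their $L^2$ norms (controlled by $A$) and $H^1$ norms (controlled by $A+R_T^{1/2}B$). From Lemma \ref{L24}, $\|\nabla u\|_{L^p}\le C(\|\mathrm{div}u\|_{L^p}+\|\omega\|_{L^p})$. The identity $(2\mu+\lambda)\mathrm{div}u=G+(P-P_s)$ together with $2\mu+\lambda\ge 2\mu$ reduces $\|\mathrm{div}u\|_{L^p}$ to $C\|G\|_{L^p}+C\|P-P_s\|_{L^p}$, so the task is to bound $\|G\|_{L^p}$, $\|\omega\|_{L^p}$, and $\|P-P_s\|_{L^p}$.

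The essential input is the Cauchy--Riemann system
\begin{equation*}
\nabla G+\mu\nabla^{\bot}\omega=\rho\dot{u}-(\rho-\rho_s)\nabla f \ \text{in}\ \Omega,\qquad \omega|_{\partial\Omega}=-Ku\cdot n^{\bot},
\end{equation*}
derived from \eqref{S303} by using $\nabla P_s=\rho_s\nabla f$ (which follows from \eqref{S107}). Decomposing $(G-\bar G,\omega)$ into interior and boundary parts as in \eqref{S309}--\eqref{S312}, applying Lemma \ref{L23}, and noting that $\|\rho\dot{u}\|_{L^2}\le\|\sqrt{\rho}\|_{L^\infty}\|\sqrt{\rho}\,\dot{u}\|_{L^2}\le R_T^{1/2}B$, one obtains $\|\nabla G\|_{L^2}+\|\nabla\omega\|_{L^2}\le C(R_T^{1/2}B+A)$. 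Since $\omega^2\le 2|\nabla u|^2$, $\|\omega\|_{L^2}\le CA$, and Lemma \ref{L22} then yields
\begin{equation*}
\|\omega\|_{L^p}\le Cp^{1/2}\|\omega\|_{L^2}^{1-\kappa}\|\omega\|_{H^1}^\kappa\le CA^{1-\kappa}(A+R_T^{1/2}B)^\kappa\le CR_T^{\kappa/2}A^{1-\kappa}(A+B)^\kappa.
\end{equation*}
An analogous estimate for $\|G\|_{L^p}$ follows after normalizing $G$ to zero mean; the mean $\bar G$ is controlled using the continuity equation applied to $\int\rho^\beta\mathrm{div}u\,dx$. For $\|P-P_s\|_{L^p}$, writing $|P-P_s|\le C(\rho+1)^{\gamma-1}|\rho-\rho_s|$ and interpolating between the $L^2$ bound $CA$ (via $A_2$) and the $L^\infty$ bound $CR_T^\gamma$ gives a factor that, after invoking Young's inequality to absorb excess $R_T$-powers into $R_T^\varepsilon$, fits into $CR_T^{\kappa/2+\varepsilon}(1+A)^{1-\kappa}(1+A+B)^\kappa$, establishing \eqref{S412}. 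For \eqref{S413}, the hypothesis $\kappa\le\varepsilon$ subsumes $R_T^{\kappa/2+\varepsilon}$ into $R_T^{2\varepsilon}$, while $\gamma<2\beta$ ensures that $\|P-P_s\|_{L^2}^2$ is controlled directly by $\int\lambda(\mathrm{div}u)^2\,dx\le A_1^2$, eliminating the additive $+1$'s.

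The main technical difficulty is twofold. First, the boundary datum $\omega|_{\partial\Omega}=-Ku\cdot n^{\bot}$ must not generate a circular $\|u\|_{W^{1,p}}$ bound; this is resolved because Lemma \ref{L23}(2) applied to the boundary-driven component only requires the lower-order norm $\|u\|_{H^1}\le A$, which fits naturally into the $L^2$-side of the interpolation. Second, the crude $L^2$ bound on $G$ via $\lambda\mathrm{div}u$ carries a factor $R_T^{\beta/2}$ that is too large; this is circumvented by working with $G-\bar G$ and controlling $\bar G$ through the continuity equation, so that Poincar\'e converts the bound on $\|\nabla G\|_{L^2}$ into one of the same size for $\|G-\bar G\|_{L^2}$, producing the desired $R_T^{\kappa/2}$ after interpolation. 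Balancing the $R_T$ exponents precisely to $\kappa/2+\varepsilon$ (respectively $2\varepsilon$) is the most delicate part of the bookkeeping.
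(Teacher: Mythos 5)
Your overall architecture (div--rot inequality, effective viscous flux, interpolation of $G$ and $\omega$ between an $A$-controlled low norm and an $H^1$ norm of size $R_T^{1/2}(1+A+B)$) matches the paper, and your treatment of $\omega$ and of the boundary datum is fine. But there is a genuine gap at the central step, the $L^p$ bound for $\mathrm{div}u$. You reduce it to $\|G\|_{L^p}$ and propose to repair the bad $L^2$ bound $\|G\|_{L^2}\le CR_T^{(\beta+\gamma)/2}A$ by writing $G=(G-\bar G)+\bar G$ and using Poincar\'e, i.e. $\|G-\bar G\|_{L^2}\le C\|\nabla G\|_{L^2}\le CR_T^{1/2}(A+B)$. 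That replaces the large $R_T$-power, but it injects $B$ and $R_T^{1/2}$ into the \emph{low-norm} side of the interpolation: with $\|G\|_{L^2}\lesssim R_T^{1/2}(1+A+B)$ and $\|G\|_{H^1}\lesssim R_T^{1/2}(1+A+B)$ you only get $\|G\|_{L^p}\lesssim R_T^{1/2}(1+A+B)$, not $R_T^{\kappa/2+\varepsilon}(1+A)^{1-\kappa}(1+A+B)^{\kappa}$. The exponent bookkeeping in your last paragraph ("producing the desired $R_T^{\kappa/2}$ after interpolation") is therefore incorrect: the $L^2$ factor carries weight $1-\kappa$, so its $R_T^{1/2}$ and its $B$ survive at full strength. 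This is not a cosmetic loss; the whole point of \eqref{S412}--\eqref{S413} is that $B$ enters only with the small power $\kappa$, so that later (Proposition \ref{L44}, Lemma \ref{L48}) the resulting terms can be absorbed by $\varepsilon B^2$. The paper's way out is different: it never interpolates $G$ with its own $L^2$ norm as the low norm, but interpolates $(2\mu+\lambda)^{-1}G$, whose $L^2$ norm is $\le C(1+A)$ because $(2\mu+\lambda)^{-1}G=\mathrm{div}u-(2\mu+\lambda)^{-1}(P-P_s)$ and the pressure part is \emph{uniformly} bounded in every $L^q$ by Corollary \ref{L33}; the large quantity $\|G\|_{L^2}$ then enters only through the intermediate exponent $\varepsilon$ in the three-way interpolation \eqref{CC350}, whence $R_T^{\kappa/2+\varepsilon}$.

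Two further points. First, your bound for $\|P-P_s\|_{L^p}$ by interpolating with the $L^\infty$ bound $CR_T^{\gamma}$ and then "absorbing excess $R_T$-powers by Young's inequality" does not work: Young cannot lower a power of the (possibly huge) constant $R_T$, and $R_T^{\gamma\kappa}$ need not be $\le R_T^{\kappa/2+\varepsilon}$. Fortunately this step is unnecessary: by Corollary \ref{L33}, $\|P-P_s\|_{L^p}\le C(p)$ uniformly, which is exactly what the paper uses. Second, for \eqref{S413} the correct mechanism under $\gamma<2\beta$ is the pointwise bound $\big((2\mu+\lambda)^{-1}|P-P_s|\big)^p\le C(\rho+1)^{p(\gamma-\beta)-2}(\rho-\rho_s)^2$, giving control by $A_2^2$ (the paper's \eqref{S421}), not control by $\int\lambda(\mathrm{div}u)^2dx$ as you assert. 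With the $(2\mu+\lambda)^{-1}G$ device and these corrections your outline can be completed, but as written the key estimate fails.
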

\begin{proof}
We define the effective viscous flux by
$G\triangleq(2\mu+\lambda)\mathrm{div}u-(P-P_s)$
 with $P_s=\rho_s^\gamma$. According to \eqref{S327}, we check that
\begin{equation}\label{CC351}
\begin{split}
\left|\int_\Omega G dx\right|&\leq C\int_\Omega\bigg(\rho^\beta|\mathrm{div}u| +(\rho+1)^{\gamma-1}|\rho-\rho_s|\bigg)dx\leq CA.\\
\end{split}
\end{equation}
Similarly, with the help of \eqref{S327}, direct computations also give
\begin{equation}\label{CC348}
\begin{split}
\int_\Omega\omega^2 dx\leq A^2,\ \int_\Omega G^2dx \leq CR_T^{\beta+\gamma} A^2,\ \int_\Omega(2\mu+\lambda)^{-1}G^2dx\leq C(1+A)^2.
\end{split}
\end{equation}

We turn to the higher order estimates of $G$ and $\omega$. Note that according to boundary condition \eqref{S105}, \eqref{S101}$_2$ implies $\omega$ solves the Dirichlet problem:
\begin{equation}\label{S414}
\begin{cases}
  \mu\Delta\omega=\nabla^\bot \bigg(\rho\dot{u}-(\rho-\rho_s)\nabla f\bigg)& \mbox{ in } \Omega, \\
   \omega=-Ku\cdot n^\bot& \mbox{ on } \partial\Omega.  \end{cases}
\end{equation}

Then, for $1<p<2$, standard $L^p$ estimate of elliptic equations \eqref{S414} (see\cite{gilbarg2015elliptic}) implies that
\begin{equation}\label{CC372}
\begin{split}
\|\nabla G\|_{L^p}+\|\nabla\omega\|_{L^p}&\leq C\bigg(\|\rho\dot{u}\|_{L^p}+\|(\rho-\rho_s)\nabla f\|_{L^p}+\|\nabla u\|_{L^p}\bigg)\\
&\leq CR_T^{1/2}(A+B).\\
\end{split}
\end{equation}

However, when $p=2$, by choosing $\varepsilon<(\gamma-1)/(\gamma+1)$, we have
\begin{equation*}
\|(\rho-\rho_s)\nabla f\|_{L^2}\leq C\|f\|_{H^2}\cdot\|\rho-\rho_s\|_{L^{2/(1-\varepsilon)}}\leq CA^{1-\varepsilon}.
\end{equation*}
Consequently, the $L^2$ estimate is given by
\begin{equation}\label{CC368}
\begin{split}
\|\nabla G\|_{L^2}+\|\nabla\omega\|_{L^2}&\leq C\bigg(\|\rho\dot{u}\|_{L^2}+\|(\rho-\rho_s)\nabla f\|_{L^2}+\|\nabla u\|_{L^2}\bigg)\\
&\leq CR_T^{1/2}(A^{1-\varepsilon}+A+B).\\
\end{split}
\end{equation}

Combining \eqref{CC351} with \eqref{CC348}, \eqref{CC372}, \eqref{CC368}, and Poincar\'{e} inequality, for $1<p<2$ we declare that
\begin{equation}\label{S415}
\begin{split}
&\|G\|_{W^{1,p}}+\|\omega\|_{W^{1,p}}\leq CR_T^{1/2}(A+B),\\
&\|G\|_{H^1}+\|\omega\|_{H^1}
\leq CR_T^{1/2}(1+A+B).\\
\end{split}
\end{equation}

While for $2<p<\infty$, in view of \eqref{CC348}, \eqref{S415}, and Lemma \ref{L22}, we also declare that
\begin{equation}\label{CC349}
\|\omega\|_{L^p}\leq C \|\omega\|_{L^2}^{1-\kappa}\|\omega\|_{H^1}^\kappa\leq CR_T^{\kappa/2}\cdot A^{1-\kappa}(1+A+B)^{\kappa}.
\end{equation}
\\

\textit{Proof of \eqref{S412}.}
For general setting, Corollary \ref{L33} ensures that, for any $1\leq p<\infty,$ we have
$$\|(2\mu+\lambda)^{-1}(P-P_s)\|_{L^p}\leq C(p).$$
Moreover, with the help of \eqref{CC348}, \eqref{S415}, and Lemma \ref{L22}, we deduce that
\begin{equation}\label{CC350}
\begin{split}
\big\|(2\mu+\lambda)^{-1}G\big\|_{L^{p} }
&\leq C \bigg(\big\|(2\mu+\lambda)^{-1}G\big\|_{L^{2}}\bigg)^{1-\kappa-\varepsilon}\cdot\bigg(\|G\|_{L^{2(\kappa+\varepsilon)/\varepsilon}}\bigg)^{\kappa+\varepsilon}\\
&\leq C\big\|(2\mu+\lambda)^{-1}G\big\|_{L^{2}}^{1-\kappa-\varepsilon}\cdot\bigg(\|G\|_{L^{2}}^{\varepsilon} \|G\|_{H^1}^{\kappa}\bigg)\\
&\leq CR_T^{\kappa/2+\varepsilon}(1+A)^{1-\kappa}(1+A+B)^{\kappa},
\end{split}
\end{equation}

Consequently, in view of \eqref{S206}, \eqref{CC349}, and \eqref{CC350}, we deduce that
\begin{equation*}
\begin{split}
\|\nabla u\|_{L^{p} }&\leq C\bigg(\|\mathrm{div}u\|_{L^{p} }+\|\omega\|_{L^{p} }\bigg) \\
&\leq C \bigg(\|(2\mu+\lambda)^{-1}G\|_{L^{p} }+\|\omega\|_{L^{p} }+1\bigg) \\
&\leq CR_T^{\kappa/2+\varepsilon}(1+A)^{1-\kappa}(1+A+B)^{\kappa},
\end{split}
\end{equation*}
which is \eqref{S412}.
\\

\textit{Proof of \eqref{S413}.}
When $\kappa<\varepsilon$, and $\gamma<2\beta$, the estimate about $(2\mu+\lambda)^{-1}(P-P_s)$ can be improved. Let us check that
\begin{equation}\label{S421}
\begin{split}
\big\|(2\mu+\lambda)^{-1}(P-P_s)\big\|_{L^{p}}^p\leq C\int_\Omega\big(\rho+1\big)^{p(\gamma-\beta)-2}
\big(\rho-\rho_s\big)^2dx\leq CA^2.
\end{split}
\end{equation}
Note that, by choosing $\kappa<\varepsilon<(\gamma+1)^{-1}$ and $\gamma<2\beta$, we have $p(\gamma-\beta)-2<(\gamma-1).$

In particular, $\kappa<\varepsilon$ also ensures  \eqref{CC351} can be improved to
\begin{equation}\label{CC352}
\begin{split}
\big\|(2\mu+\lambda)^{-1}G\big\|_{L^{p}}
&\leq C\big\|(2\mu+\lambda)^{-1}G\big\|_{L^{2}}^{1-\kappa-\varepsilon}\cdot\bigg(\|G\|_{L^{2}}^{\varepsilon} \|G\|_{H^1}^{\kappa}\bigg)\\
&\leq CR_T^{2\varepsilon}\cdot A^{1-\kappa}(1+A+B)^{\kappa},
\end{split}
\end{equation}
since by setting $p=2$ in \eqref{S421}, we have
$$\|(2\mu+\lambda)^{-1}G\big\|_{L^{2}}\leq C\bigg(\|\mathrm{div}u\|_{L^2}+\big\|(2\mu+\lambda)^{-1}(P-P_s)\big\|_{L^{2}}\bigg)\leq CA.$$

Thus, by making use of \eqref{S206}, \eqref{CC349}, \eqref{S421}, and \eqref{CC352}, we derive that
\begin{equation*}
\begin{split}
\|\nabla u\|_{L^{p} }&\leq C\bigg(\|\mathrm{div}u\|_{L^{p} }+\|\omega\|_{L^{p} }\bigg) \\
&\leq C \bigg(\left\|(2\mu+\lambda)^{-1}\cdot(G+P-P_s)\right\|_{L^{p}}+\|\omega\|_{L^{p}}\bigg) \\
&\leq CR_T^{2\varepsilon}\cdot A^{1-\kappa}(1+A+B)^{\kappa}.
\end{split}
\end{equation*}
which is \eqref{S413}. We therefore finish the proof.
\end{proof}

We end this section by an energy estimate providing proper control of $B$.
\begin{proposition}\label{L44}
Under the same assumptions of Theorem \ref{T11}. For any $\varepsilon\in(0,1)$,  there is a constant $C $ depending only on $\varepsilon$, $\mu$, $\gamma$,  $\rho_0$, $u_0$, and $\Omega$ such that
\begin{equation}\label{S416}
\sup_{0\leq t\leq T}\mathrm{log}(C+ A^2) +\int^T_0 \frac{B^2}{1+A^2}dt
\leq C R_T^{1+\varepsilon }.
\end{equation}
\end{proposition}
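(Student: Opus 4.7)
The strategy is a Hoff-type estimate on the material derivative $\dot u$ packaged in a logarithmic form so that the $R_T^{1+\varepsilon}$ bound survives despite the possible growth of $A$. Using $\nabla P_s=\rho_s\nabla f$, rewrite \eqref{S303} as
\begin{equation*}
\rho\dot u=\nabla G+\mu\nabla^{\bot}\omega+(\rho-\rho_s)\nabla f,
\end{equation*}
multiply by $\dot u$, and integrate over $\Omega$. After integration by parts in the two viscous terms, writing $\dot u=u_t+(u\cdot\nabla)u$, and using $\eqref{S101}_1$ to trade $\lambda_t=\beta\rho^{\beta-1}\rho_t$ for spatial derivatives, one obtains a differential identity
\begin{equation*}
\frac{d}{dt}\Psi(t)+\int_\Omega\rho|\dot u|^2 dx=\mathcal R(t),
\end{equation*}
with $\Psi(t)\simeq\tfrac12\int_\Omega\bigl((2\mu+\lambda)(\mathrm{div}u)^2+\mu\omega^2-2(P-P_s)\mathrm{div}u\bigr)dx$ comparable to $A^2$ up to additive $O(1)$ corrections already controlled by Corollary \ref{L33} and Proposition \ref{L35}. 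The residual $\mathcal R$ collects cubic terms in $\nabla u$ (typically $\int G\,\partial_i u_j\partial_j u_i$, $\int\omega|\nabla u|^2$, and $\int\lambda(\mathrm{div}u)^3$), an external-force contribution $\int(\rho-\rho_s)\nabla f\cdot u_t$ rearranged by an additional time derivative, and Navier-slip boundary contributions treated by the Cai-Li \cite{caili01} identity $\int_{\partial\Omega}n^{\bot}\cdot(\nabla g\cdot h)dS=\int_\Omega\nabla^{\bot} g\cdot\nabla h\,dx$ combined with $u=(u\cdot n^{\bot})n^{\bot}$ on $\partial\Omega$.

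The technical core of the argument is to show
\begin{equation*}
\mathcal R(t)\le\tfrac12\int_\Omega\rho|\dot u|^2 dx+CR_T^{1+\varepsilon}(C+A^2)\bigl(A^2+h(t)\bigr),
\end{equation*}
for some $h\in L^1(0,T)$ uniformly in $T$. The natural ingredient is Proposition \ref{L43} with $p=3$ ($\kappa=1/3$), yielding $\|\nabla u\|_{L^3}^3\le CR_T^{1/2+\varepsilon}(1+A)^2(1+A+B)$; after Young's inequality the $B$-factor becomes the $\tfrac12$ absorbed on the left. The main obstacle, and the reason the final exponent is precisely $1+\varepsilon$, is the cubic viscous term $\int\lambda(\mathrm{div}u)^3$: estimating $\lambda\le R_T^\beta$ together with $\|\nabla u\|_{L^3}^3$ would give the disastrous $R_T^{\beta+1/2}$. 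The remedy is to factor one copy of $(2\mu+\lambda)^{1/2}\mathrm{div}u$ into the already controlled $A$-norm, so that only a single power of $\lambda^{1/2}$ enters the $L^p$ side, and then to handle the remaining large-density contribution via the truncation $\rho=(\rho-M_1)_++M_1$ together with the global integrability $\int_0^T\!\!\int_\Omega(\rho-M_1)_+^p dxdt\le C$ from Corollary \ref{L33}. The pressure-type term $\int(P-P_s)|\nabla u|^2$ and the external-force piece $\int(\rho-\rho_s)\nabla f\cdot u_t$ are similarly packaged using the $\gamma>1$ extra order together with Proposition \ref{L35}, so that every $(\rho-\rho_s)$-factor ends up inside $h(t)\in L^1(0,T)$.

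Once such an inequality is in place, absorbing the $B^2$ into the left-hand side and dividing through by $C+A^2$ gives
\begin{equation*}
\frac{d}{dt}\log(C+A^2)+\frac{B^2}{2(C+A^2)}\le CR_T^{1+\varepsilon}\bigl(A^2+h(t)\bigr).
\end{equation*}
Integration in $t$ together with \eqref{CC344}, which yields $\int_0^T A^2 dt\le C$ uniformly in $T$, and the uniform $L^1$-bound on $h$, produces \eqref{S416}. The chief difficulty, to reiterate, lies in step two: extracting precisely $R_T^{1+\varepsilon}$ (rather than $R_T^{\beta+1/2}$) from the cubic viscous terms by carefully sharing the factor $\lambda$ between an $A$-controlled $L^2$ piece and the Proposition \ref{L43}-controlled $L^p$ piece, and bundling all excess large-density mass into globally integrable remainders via Corollary \ref{L33} and Proposition \ref{L35}.
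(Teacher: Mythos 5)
Your overall strategy is the same as the paper's: a Hoff-type estimate on $\dot u$, an energy functional comparable to $A^2$, absorption of $B^2$, division by $C+A^2$ and integration using $\int_0^T A^2\,dt\le C$. The difference, and the gap, is in the step you yourself call the technical core: the choice $p=3$ ($\kappa=1/3$) in Proposition \ref{L43} cannot deliver the inequality $\mathcal R\le\tfrac12 B^2+CR_T^{1+\varepsilon}(C+A^2)\bigl(A^2+h(t)\bigr)$ with $h\in L^1$ uniformly in $T$. Indeed, \eqref{S413} is unavailable for $\kappa=1/3>\varepsilon$, so you must use \eqref{S412}, giving $\|\nabla u\|_{L^3}^3\le CR_T^{1/2+3\varepsilon}(1+A)^2(1+A+B)$. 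The piece carrying $B$ is then $R_T^{1/2+\varepsilon}(1+A)^2B$, and Young's inequality leaves the complementary term $CR_T^{1+2\varepsilon}(1+A)^4$. After dividing by $C+A^2$ this is of size $R_T^{1+2\varepsilon}(1+A^2)$, which is \emph{not} uniformly integrable in time (the constant $1$ integrates to $T$), so the integrated bound degenerates to $CR_T^{1+\varepsilon}(T+1)$ instead of $CR_T^{1+\varepsilon}$. Even if you replace the $(1+A)$ factors by the sharper $\|G\|_{L^3}+\|\omega\|_{L^3}\le CR_T^{1/6+\varepsilon}A^{2/3}(A^{1-\varepsilon}+A+B)^{1/3}$, the worst term is $\sim R_T^{1/2+\varepsilon}A^{2/3}B$, whose Young complement $R_T^{1+\varepsilon}A^{4/3}$ is still not $(C+A^2)\times L^1_t$, since only $A^2$ (not $A^{4/3}$) is globally integrable. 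The structural requirement is that at least one full power of $A$ accompany $B$ before absorption, so that the leftover is $(1+A^2)A^2$.

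This is exactly why the paper takes $\kappa=1-2/p>1-\varepsilon$ (i.e., $p$ very large) and pairs $\|G\|_{L^p}+\|\omega\|_{L^p}\le CR_T^{\kappa/2+\varepsilon}A^{1-\kappa}(A^{1-\varepsilon}+A+B)^{\kappa}$ with $\|\nabla u\|_{L^{2p/(p-1)}}^2$, the latter exponent being just above $2$ so that it is essentially $\|\nabla u\|_{L^2}\le A$ with only an $\varepsilon$-weight on the $L^p$ norm; the product is then $CR_T^{1/2+\varepsilon}(A+A^2)(A+A^2+B)$, whose Young complement is $CR_T^{1+\varepsilon}(1+A^2)A^2$ — precisely the form that survives division by $C+A^2$ and time integration (see \eqref{CC369}--\eqref{CC375}). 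Two further remarks: the paper's energy functional is $\int\bigl(\tfrac{G^2}{2\mu+\lambda}+\omega^2\bigr)dx$ rather than your $\Psi$, which sidesteps the explicit cubic term $\int\lambda(\mathrm{div}u)^3$ you worry about; and the exponent $1+\varepsilon$ does not originate from that cubic viscous term but from Young's inequality doubling the $R_T^{1/2}$ prefactor that enters through $\|\rho\dot u\|_{L^2}\le R_T^{1/2}B$ in the elliptic estimates \eqref{CC368} for $\nabla G$, $\nabla\omega$. So your proposal is repairable, but as written the key exponent choice would fail to give a $T$-independent bound.
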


\begin{proof}
We mention that Corollary \ref{L33} simplify the arguments given by \cite{FLL}. For convenient, we set $\mu=1$ in calculations. Let us first rewrite \eqref{S101}$_2$ as
\begin{equation}\label{S419}
\rho\dot{u}-\nabla G-\nabla^\bot\omega=(\rho-\rho_s)\nabla f.
\end{equation}
Multiplying \eqref{S419} by $2\dot{u}$ and integrating over $\Omega$  by parts give
\begin{equation}\label{CC354}
B^2+\int_\Omega\bigg(G\cdot\mathrm{div}\dot{u}+\omega\cdot\mathrm{rot}\dot{u}\bigg)dx=\int_\Omega(\rho-\rho_s)\dot{u}\cdot\nabla f\, dx+\int_{\partial\Omega}\dot{u}\cdot \bigg(G\cdot n+\omega\cdot n^\bot\bigg)dS.
\end{equation}
\\

\textit{The estimates on right side of \eqref{CC354}.}
We handle the external force term via
\begin{equation*}
\begin{split}
&\int_\Omega(\rho-\rho_s)\dot{u}\cdot\nabla f dx\\
&=\left(\int_\Omega(\rho-\rho_s)u\cdot\nabla fdx\right)'
+\int_\Omega\bigg(\mathrm{div}(\rho u)u+(\rho-\rho_s)(u\cdot\nabla)u\bigg)\cdot\nabla f dx\\
&\leq \left(\int_\Omega(\rho-\rho_s)u\cdot\nabla fdx\right)'+CA^2,
\end{split}
\end{equation*}
where in the last line, we have applied the fact
\begin{equation}\label{CC355}
\begin{split}
&\int_\Omega\bigg(\mathrm{div}(\rho u)u+(\rho-\rho_s)(u\cdot\nabla)u\bigg)\cdot\nabla f dx\\
&=-\int_\Omega\bigg(\rho u\otimes u\cdot\nabla^2 f+\rho_s u\cdot\nabla u\cdot\nabla f\bigg) dx\\
&\leq C\bigg(1+\|\rho\|_{L^4}\bigg)\|f\|_{H^2}\cdot\|\nabla u\|_{L^2}^2\leq CA^2.
\end{split}
\end{equation}

Moreover, according to \eqref{S327}, we check that
\begin{equation}\label{CC365}
\left|\int_\Omega(\rho-\rho_s)u\cdot\nabla f\, dx\right|
\leq \|f\|_{H^2}\|\rho-\rho_s\|_{L^2}\|u\|_{L^4}\leq C+\varepsilon A^2.
\end{equation}

Next, we need some calculations dealing boundary terms. To make process clear, we introduce some notations. Let
\begin{equation}\label{CC364}
\begin{split}
&\langle u,v\rangle\triangleq u\cdot v,\ \ \nabla_u v\triangleq u\cdot\nabla v,
\end{split}
\end{equation}
which denote the Euclidean inner product (of $u$ and $v$) and the directional (in $u$) derivative (of $v$) respectively.

Recalling that $n$ denotes the unit outer normal vector of $\partial\Omega$, and $n^\bot$ is a unit tangential vector of $\partial\Omega$. According to $\langle n,n\rangle=\langle n^\bot,n^\bot\rangle=1$,
we argue that the vector $\nabla_{n^\bot}n^\bot$ is along the normal direction, because
\begin{equation}\label{CC359}
2 \langle \nabla_{n^\bot}n^\bot, n^\bot\rangle=\nabla_{n^\bot}\langle n^\bot, n^\bot\rangle=0.
\end{equation}

We denote the projection of $u$ on $n^\bot$ by $\hat{u}=\langle u, n^\bot\rangle$ , which is a scaler. Note that \eqref{S105} gives $\langle u,n\rangle=0$, which means $u$ along the tangent direction. Thus we have
\begin{equation}\label{CC366}
u=\langle u, n^\bot\rangle n^\bot=\hat{u}\cdot n^\bot \ \ \mathrm{and}  \ \  \dot{u}=u_t+\nabla_uu.\end{equation}

Let us check that $\langle u,n\rangle=0$ provides that
\begin{equation}\label{CC356}
\begin{split}
\langle\nabla_uu,n\rangle
=\nabla_u\langle u,n\rangle-\langle u,\nabla_un\rangle=-\langle u,\nabla_un\rangle.\\
\end{split}
\end{equation}
Similarly we also have
\begin{equation}\label{CC357}
\begin{split}
\langle\nabla_uu,n^\bot\rangle=\nabla_u\hat{u}-\langle u,\nabla_un^\bot\rangle=\nabla_u\hat{u}= \nabla_{n^\bot}(\hat{u}^2/2),
\end{split}
\end{equation}
since $\nabla_un^\bot=\hat{u}\cdot(\nabla_{n^\bot}n^\bot)$ is along normal direction and $\nabla_u\hat{u}=\hat{u}\, (n^\bot\cdot\nabla)\hat{u}$ due to \eqref{CC359} and \eqref{CC366}.

Combining \eqref{CC356} with \eqref{CC357}, we argue that
\begin{equation}\label{CC358}
\begin{split}
&\langle\dot{u},n\rangle
=\partial_t\langle u,n\rangle+\langle\nabla_uu,n\rangle=-\langle u,\nabla_un\rangle,\\
&\langle\dot{u},n^\bot\rangle
=\partial_t\langle u,n^\bot\rangle+\langle\nabla_uu,n^\bot\rangle=\partial_t\hat{u}+\nabla_{n^\bot}(\hat{u}^2/2).
\end{split}
\end{equation}

Moreover, we will apply the following estimate of boundary term repeatedly, where the ideas are due to \cite{caili01}. According to \eqref{CC366}, we have
\begin{equation*}
\begin{split}
\int_{\partial\Omega} u\cdot\bigg(\nabla v\cdot w\bigg)dS
&=\int_{\partial\Omega}n^\bot\cdot\bigg(\nabla v\cdot w\cdot\hat{u}\bigg)dS\\
&=\int_{\partial\Omega}n \cdot\bigg(\nabla^\bot v\cdot w\cdot\hat{u}\bigg)dS\\
&=\int_\Omega\mathrm{div}\bigg(\nabla^\bot v\cdot w\cdot\hat{u}\bigg)dx.\\
\end{split}
\end{equation*}
Note that $\mathrm{div}\nabla^\bot=0$, hence we declare that
\begin{equation}\label{CCC401}
\bigg|\int_{\partial\Omega} u\cdot\bigg(\nabla v\cdot w\bigg)dS\bigg|\leq C\int_\Omega|\nabla v|(|w|+|\nabla w|)(|u|+|\nabla u|)dx.
\end{equation}

After these calculations, we turn to the boundary term of \eqref{CC354} which consists of two parts. In terminology of \eqref{CC364}, we have
$$
\int_{\partial\Omega}\dot{u}\cdot\bigg(G\cdot n+\omega\cdot n^\bot\bigg)dS=
\int_{\partial\Omega} G\langle\dot{u}, n\rangle\, dS+\int_{\partial\Omega}\omega\langle\dot{u},n^\bot\rangle dS.$$

In view of \eqref{CC358}$_1$ and \eqref{S415}, we argue that
\begin{equation}\label{CC361}
\begin{split}
\int_{\partial\Omega} G\langle\dot{u}, n\rangle\, dS&=\int_{\partial\Omega} -G\langle u, u\cdot\nabla n \rangle\, dS\\
&\leq C\|G\|_{H^1}\|\nabla u\|_{L^2}^2\\
&\leq C\bigg(R_T^{\frac{1}{2}}(1+A+B)\bigg)\cdot A^2\\
&\leq CR_T (1+A^2)A^2+\varepsilon B^2.
\end{split}
\end{equation}

Meanwhile, in view of \eqref{CC358}$_2$ and boundary condition \eqref{S105}, we declare that
\begin{equation}\label{CC363}
\begin{split}
\int_{\partial\Omega}\omega\langle\dot{u},n^\bot\rangle dS
&=-\int_{\partial\Omega}K\hat{u}\cdot\bigg(\partial_t\hat{u}+\nabla_{n^\bot}(\hat{u}^2/2)\bigg)dS\\
&=-\left(\int_{\partial\Omega}K\hat{u}^2/2\,dS\right)'-\int_{\partial\Omega}n^\bot\cdot \bigg(\nabla(\hat{u}^3/3)\cdot K\bigg) dS\\
&\leq -\left(\int_{\partial\Omega}K\hat{u}^2/2\,dS\right)'+CA^3,
\end{split}
\end{equation}
where in the last line, we have applied \eqref{CCC401} to declare that
\begin{equation*}
\begin{split}
\int_{\partial\Omega}n^\bot\cdot\bigg(\nabla(\hat{u}^3/3)\cdot K\bigg)
dS
&\leq C\int_\Omega|u|^2\cdot|\nabla u|dx\leq CA^3.
\end{split}
\end{equation*}

Combining \eqref{CC355}, \eqref{CC361}, and  \eqref{CC363} yields
\begin{equation}\label{CC362}
\mbox{The right side of \eqref{CC354}}\leq -E_1'+CR_T (1+A^2)A^2+\varepsilon B^2.
\end{equation}
We mention that by virtue of \eqref{CC365},
\begin{equation}\label{CC424}
E_1(t)=\int_{\partial\Omega}K\hat{u}^2/2\, dS-\int_\Omega(\rho-\rho_s)u\cdot\nabla f dx,
\end{equation}
which satisfies $-C-\varepsilon A^2\leq E_1\leq C(1+A^2).$
\\

\textit{The estimates on left side of \eqref{CC354}.}
Direct computations yield
\begin{equation*}
\begin{split}
\mathrm{div}\dot{u}&=\frac{d}{dt}\bigg(\frac{G+P-P_s}{2+\lambda}\bigg)+(\partial_1u\cdot\nabla) u_1+(\partial_2u\cdot\nabla) u_2,\\
\mathrm{rot}\dot{u}&=\frac{d}{dt}(\mathrm{rot}u)-(\partial_1u\cdot\nabla) u_2+(\partial_2u\cdot\nabla )u_1.
\end{split}
\end{equation*}
Therefore, the left side of  \eqref{CC354} is transformed into
\begin{equation}\label{CC373}
\int_\Omega\bigg(G\cdot\mathrm{div}\dot{u}+\omega\cdot\mathrm{rot}\dot{u}\bigg)dx+B^2\geq\frac{d}{dt}\int_\Omega\bigg(\frac{G^2}{2+\lambda}+\omega^2\bigg)\, dx +B^2-R.
\end{equation}

The remaining term $R$ is given by
\begin{equation*}
\begin{split}
&R=\int_\Omega\bigg(|\omega|+|G|\bigg)\cdot|\nabla u|^2dx+\int_\Omega \textbf{F}(\rho)\cdot G\cdot\bigg(|u|+|\nabla u|\bigg)dx\triangleq R_1+R_2.\\
\end{split}
\end{equation*}
with some positive function $\textbf{F}(x)\leq C(1+x^\gamma)$.

The estimate for $R_1$ is subtle. We still set $\kappa=1-2/p>1-\varepsilon$, then check that
\begin{equation}\label{CC370}
\begin{split}
R_1
&= \int\bigg(|\omega|+|G|\bigg)\cdot|\nabla u|^2 dx\leq\bigg(\|\omega\|_{L^p}+ \|G\|_{L^{p}}\bigg)\|\nabla u\|_{L^{2p/(p-1)}}^{2}.\\
\end{split}
\end{equation}

Note that in view of \eqref{CC348} and \eqref{CC368}, we have
\begin{equation}\label{CC369}
\begin{split}
\|\omega\|_{L^p}+\|G\|_{L^{p}}
&\leq C\bigg(\|\omega\|_{L^2}+\|G\|_{L^{2}}\bigg)^{1-\kappa}\bigg(\|\omega\|_{H^1}+\|G\|_{H^{1}}\bigg)^\kappa\\
&\leq CR_T^{\kappa/2+\varepsilon}\cdot
A^{1-\kappa}(A^{1-\varepsilon}+A+B)^\kappa.\end{split}
\end{equation}

Meanwhile, with the help of \eqref{S412}, for $\theta=(1-\kappa)/2\kappa$, we deduce that
\begin{equation}\label{CC371}
\begin{split}
\big\|\nabla u\big\|_{L^{2p/(p-1)}}&\leq\big\|\nabla u\big\|_{L^2}^{1-\theta}\cdot\big\|\nabla u\big\|_{L^p}^{\theta}\\
&\leq CR_T^{2\varepsilon}\cdot A^{1-\theta} \bigg( (1+A)^{(1-\kappa)}(1+A+B)^{\kappa}\bigg)^\theta,\\
\end{split}
\end{equation}
as $\kappa\theta=1-\kappa<\varepsilon$.

Now combining \eqref{CC370} with \eqref{CC369} and \eqref{CC371}, after some detailed calculations, we derive that
\begin{equation}\label{CC375}
\begin{split}
R_1
&\leq CR_T^{\kappa/2+\varepsilon}  A^{3-2\theta-\kappa}\big(A^{1-\varepsilon}+A+B\big)^\kappa\bigg( (1+A)^{(1-\kappa)}(1+A+B)^{\kappa}\bigg)^{2\theta}\\
&\leq CR_T^{1/2+\varepsilon}(A+A^2)\cdot\bigg(A+A^2+B\bigg) \\
&\leq CR_T^{1+\varepsilon}(1+A^2)A^2+\varepsilon B^2.
\end{split}
\end{equation}

Next we deal with $R_2$. Because of $\textbf{F}(x)\leq C(1+x^\gamma)$, Corollary \ref{L33} ensures $$\textbf{F}(\rho)\in L^\infty(0,T;L^p(\Omega)),$$
for any $1\leq p<\infty$ and $0\leq T<\infty$.

Thus, with the help of \eqref{S415}, we argue that
\begin{equation}\label{CC376}
\begin{split}
R_2&=\int_\Omega \textbf{F}(\rho)\cdot G\cdot\bigg(|u|+|\nabla u|\bigg)dx\\
&\leq C\,\|\textbf{F}(\rho)\|_{L^4}\,\|G\|_{W^{1,4/3}} \,\|\nabla u\|_{L^2}\\
&\leq CR_T^{1/2}\cdot A \bigg(A+B\bigg)\\
&\leq CR_TA^2+\varepsilon B^2.
\end{split}
\end{equation}

Substituting \eqref{CC375} and \eqref{CC376} into \eqref{CC373} provides
\begin{equation}\label{CC367}
\begin{split}
\mbox{The left side of \eqref{CC354}}
\geq E_2'+B^2-CR_T^{1+\varepsilon}(1+A^2)A^2.
\end{split}
\end{equation}
Note that by virtue of \eqref{S206} and \eqref{S327}, we deduce that
\begin{equation}\label{CC425}
E_2(t)=\int_\Omega\bigg(\frac{G^2}{2+\lambda}+\omega^2\bigg)dx,
\end{equation}
which satisfies $C(A^2-1)\leq E_2\leq C(A^2+1)$.

Combining \eqref{CC362} with \eqref{CC367} and setting $E\triangleq E_1+E_2$ give us
\begin{equation}\label{S422}
E'+B^2\leq CR_T^{1+\varepsilon}(1+A^2)A^2.
\end{equation}
Moreover, \eqref{CC424} and \eqref{CC425} ensure that we can find some constants $C$ and $N$ such that, for any $t\in[0,T]$,
\begin{equation*}
\frac{1}{N}\bigg(1+A^2(t)\bigg)\leq\bigg(C+E(t)\bigg)\leq N\bigg(1+A^2(t)\bigg).
\end{equation*}

Multiplying \eqref{S422} by $1/(C+E)$ and integrating with respect to $t$ yield
\begin{equation*}
\sup_{0\leq t\leq T}\log(C+A^2)+\int_0^T\frac{B^2}{1+A^2}dt\leq CR_T^{1+\varepsilon},
\end{equation*}
which gives \eqref{S416} and finishes the proof.
\end{proof}

\section{Proof of Theorem \ref{T11}}\label{S5}
\quad With all preparation done, we turn to the proof of main Theorem \ref{T11}. We derive the uniform upper bound of $\rho$ in Section \ref{SS41} and obtain the exponential decay of the system in Section \ref{SS42}.
\subsection{Uniform upper bound of density}\label{SS41}
\quad  In Section \ref{SEC3}, we establish the uniform $L^p$ estimate of $\rho$ for any $1\leq p<\infty$. However, we can not pass $p\rightarrow\infty$ directly, because the estimate is not uniform with respect to $p$.

Recalling that \eqref{S306} gives
\begin{equation*}
\frac{d}{dt}\theta(\rho)+(P-P_s) =-G,
\end{equation*}
where $G=(2\mu+\lambda)\mathrm{div}u-(P-P_s)$. Thus,
the key issue to derive $L^\infty$ estimates of $\rho$ is to obtain the point-wise control of $G$.

In view of \eqref{S101}$_2$ and \eqref{S105}, for $t\in[0,T]$, $G$ solves Neumann problem :
\begin{equation}\label{S429}
\begin{cases}
  \Delta G=\mathrm{div} H& \mbox{ in } \Omega, \\
   n\cdot\nabla G=n\cdot H-\mu \big(n^\bot\cdot\nabla\big)\omega & \mbox{ on } \partial\Omega,  \end{cases}
\end{equation}
where $H=\rho\dot{u}-(\rho-\rho_s)\nabla f$.
We adopt the Green function to get the point-wise representation of $G$. The method is based on our previous paper \cite{FLL}

Note that the Green function $N(x,y)$ for Neumann problem (see \cite{2016Representation}) on the unit disc $\mathbb{D}$ is given by
$$N(x,y)=-\frac{1}{2\pi}\bigg(\log|x-y|+\log\left||x|y-\frac{x}{|x|}\right|\bigg).$$
Let $\varphi=(\varphi_1, \varphi_2):\overline{\Omega}\rightarrow\overline{\mathbb{D}}$ be a conformal mapping guaranteed by Riemann mapping theorem (see \cite{2007Complex}). We define the pull back Green's function $\widetilde{N}(x,y)$ of $\Omega$ by:
\begin{equation*}
\begin{split}
\widetilde{N}(x,\, y)&=N\big(\varphi(x),\varphi(y)\big) \ \ \mathrm{for}\ x,y\in\Omega.
\end{split}
\end{equation*}

The pull back Green's function $\widetilde{N}$ serves as a fundamental solution as well, owing to the fact that conformal mapping preserves harmonicity. In particular, the point-wise representation of $G$ is given by next lemma, whose complete proof can be found in \cite[Lemma 13 \& Proposition 4]{FLL}.
\begin{lemma}\label{L46}
Suppose that $G\in C\big([0,T];C^1(\bar\Omega)\cap C^2(\Omega)\big)$ solves the problem \eqref{S429}. For any $x\in\Omega$, we have
\begin{equation} \label{CC401}
\begin{split}
G(x,t)
=&-\int_\Omega \nabla_y\widetilde{N}(x,\, y) \cdot H (y,t)\, dy
+\int_{\partial\Omega}\bigg(\frac{\partial \widetilde{N}}{\partial n}\cdot G+\mu\widetilde{N} \big(n^\bot\cdot\nabla\big)\omega \bigg)dS\\
=&-\frac{d}{dt}I(x,t)+J(x,t)+K(x,t).
\end{split}
\end{equation}
Especially, the time derivative term is given by
$$I(x,t)\triangleq\int_\Omega\nabla_y\widetilde{N}\big(x,y\big)\cdot \rho u(y,t)\, dy,$$
the principal term is defined by
$$J(x,t)\triangleq\int_\Omega\left(\partial_{x_iy_j}\widetilde{N}(x,y)\cdot u_i(x)+\partial_{y_iy_j}\widetilde{N}(x,y)\cdot u_i(y)\right)\rho u_j(y)dy,$$
while the remaining term satisfies
\begin{equation*}
\begin{split}
K(x,t)&\triangleq\int_\Omega\nabla_y\widetilde{N}\cdot(\rho_s-\rho)\nabla f\,dy+\int_{\partial\Omega}\bigg(\frac{\partial \widetilde{N}}{\partial n}\cdot G+\mu\widetilde{N} \big(n^\bot\cdot\nabla\big)\omega \bigg)dS.
\end{split}
\end{equation*}
\end{lemma}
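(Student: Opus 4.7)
The plan is to apply Green's second identity to the Neumann problem \eqref{S429} with the pull-back function $\widetilde N(x,\cdot)$ as test function, and then decompose the source $H=\rho\dot u-(\rho-\rho_s)\nabla f$ into a time-derivative part, a convective part, and a lower-order part. The key preparatory fact is that $\widetilde N$ behaves as a (normalized) Neumann Green's function on $\Omega$: via the biholomorphism $\varphi:\Omega\to\mathbb D$, guaranteed by the Riemann mapping theorem because $\Omega$ is simply connected, and conformal covariance of the Laplacian, $\widetilde N(x,\cdot)$ solves a Neumann problem on $\Omega$ with a Dirac source at $y=x$; because $\varphi$ sends boundary to boundary and the outer normal to the outer normal, the condition $\partial_n N=0$ on $\partial\mathbb D$ transfers to $\partial_{n_y}\widetilde N=0$ on $\partial\Omega$.

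The first equality in \eqref{CC401} then follows from a single Green's-identity computation: multiply $\Delta G=\mathrm{div}\,H$ by $\widetilde N(x,\cdot)$, integrate over $\Omega$, and substitute the Neumann datum $n\cdot\nabla G=n\cdot H-\mu(n^\bot\cdot\nabla)\omega$. The left-hand side reproduces $G(x,t)$ (the residual conformal-weight average generated by the Jacobian $|\varphi'|^2$ cancels against the normalization of $N$), while on the right one integration by parts transfers the divergence onto $\widetilde N$, producing $-\int_\Omega\nabla_y\widetilde N(x,y)\cdot H(y,t)\,dy$ together with the stated boundary integrals.

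For the second equality, expand $H=(\rho u)_t+\mathrm{div}(\rho u\otimes u)-(\rho-\rho_s)\nabla f$. The $(\rho u)_t$ piece pulls out of the spatial integral as $-\tfrac{d}{dt}I(x,t)$, since $\widetilde N$ is time-independent. The external-force term and the two boundary integrals assemble into $K(x,t)$. The convective piece is integrated by parts once more, with the boundary flux vanishing thanks to $u\cdot n|_{\partial\Omega}=0$, leaving the Calder\'on--Zygmund-type integral $\int\partial_{y_iy_j}\widetilde N(x,y)\rho u_i(y)u_j(y)\,dy$. To cast this in the symmetrized form of $J$, one uses the splitting $u_i(y)=u_i(x)+(u_i(y)-u_i(x))$ together with the free-space identity $\partial_{x_i}=-\partial_{y_i}$ applied to $-\tfrac{1}{2\pi}\log|x-y|$, which rewrites $\partial_{y_iy_j}\widetilde N=-\partial_{x_iy_j}\widetilde N$ modulo smooth corrections absorbable into $K$; after relabeling, the resulting kernel paired against $\rho u_j(y)$ is exactly $\partial_{x_iy_j}\widetilde N(x,y)u_i(x)+\partial_{y_iy_j}\widetilde N(x,y)u_i(y)$, i.e.\ $J(x,t)$.

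The main obstacle is the careful boundary and singularity accounting through the conformal map: the factor $|\varphi'|$ perturbs every volume and boundary identity inherited from $\mathbb D$, so each such identity must be rewritten in the induced geometry of $\Omega$, and the second-order kernel $\partial_{y_iy_j}\widetilde N$ being only Calder\'on--Zygmund means that the commutator-style rearrangement into $J$ requires delicate justification of all intermediate integrations by parts, particularly near the diagonal. The complete bookkeeping, including the point-wise singularity analysis, is carried out in \cite[Lemma 13 \& Proposition 4]{FLL}.
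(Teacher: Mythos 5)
Your overall plan (Green's identity on $\Omega$ with the conformally pulled-back Neumann function, then splitting $H=(\rho u)_t+\mathrm{div}(\rho u\otimes u)-(\rho-\rho_s)\nabla f$) is the same route as the paper's reference \cite[Lemma 13 \& Proposition 4]{FLL}, and your derivation of the first equality is essentially right. But there are two genuine problems. First, for the explicit disc kernel $N(x,y)=-\frac{1}{2\pi}\big(\log|x-y|+\log\big||x|y-\frac{x}{|x|}\big|\big)$ one has $\Delta_y N=\delta_x$ and $\partial_{n_y}N\equiv-\frac{1}{2\pi}$ on $\partial\mathbb D$, \emph{not} $\partial_n N=0$; it is exactly this nonzero (after pull-back, bounded but nonconstant) normal derivative that produces the boundary term $\int_{\partial\Omega}\frac{\partial\widetilde N}{\partial n}\,G\,dS$ in \eqref{CC401}. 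Your claim that the Neumann datum transfers to $\partial_{n_y}\widetilde N=0$ contradicts the very formula you then write down.

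Second, and more seriously, your derivation of the second equality fails. In this paper $\frac{d}{dt}$ denotes the \emph{material} derivative $\partial_t+u\cdot\nabla$, so
\begin{equation*}
-\frac{d}{dt}I(x,t)=-\int_\Omega\nabla_y\widetilde N\cdot(\rho u)_t\,dy\;-\;u_i(x)\int_\Omega\partial_{x_iy_j}\widetilde N(x,y)\,\rho u_j(y)\,dy ,
\end{equation*}
and the second piece is exactly cancelled by the $\partial_{x_iy_j}\widetilde N\,u_i(x)$ half of $J$, while the $\partial_{y_iy_j}\widetilde N\,u_i(y)$ half of $J$ is the convective term after one integration by parts using $u\cdot n|_{\partial\Omega}=0$. (This convention is also what makes the later step $-G(x(t),t)=I'(t)-J-K$ along flow lines in Proposition \ref{L47} legitimate.) You instead read $\frac{d}{dt}I$ as $\partial_tI$ and then try to convert the convective integral $\int\partial_{y_iy_j}\widetilde N\,\rho u_iu_j\,dy$ into $J$ by the splitting $u_i(y)=u_i(x)+(u_i(y)-u_i(x))$ and the identity $\partial_{x_i}\log|x-y|=-\partial_{y_i}\log|x-y|$, claiming the discrepancy is "absorbable into $K$". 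That cannot work: the difference $J-\int\partial_{y_iy_j}\widetilde N\,\rho u_iu_j\,dy=u_i(x)\int\partial_{x_iy_j}\widetilde N\,\rho u_j(y)\,dy$ is a genuine Calder\'on--Zygmund-type singular integral of the same order as $J$ itself, and $K$ is explicitly defined in the statement, so nothing can be absorbed into it. The symmetrized difference-quotient form you aim at is useful later for the $L^\infty$ bound on $J$, but it is not how the identity \eqref{CC401} arises; the identity comes from the material-derivative convention, which your argument misses.
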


The key step to get uniform upper bound of  $\rho$ is to declare the following proposition.
\begin{proposition}\label{L47}
Under the conditions of Theorem \ref{T11}, for any $\varepsilon>0$ and $0\leq t_1\leq t_2<\infty$, there is a constant $C$ depending only on $\varepsilon$, $\beta$, $\mu$, $\rho_0$, $u_0$, and $\Omega$, such that

\noindent when $\gamma<2\beta$, we have
\begin{equation}\label{S432}
\int_{t_1}^{t_2}-G(x(t),t)\, dt\leq CR_T^{1+\varepsilon}(t_2-t_1)+CR_T^{1+\frac{\beta}{4}+2\varepsilon},
\end{equation}
while for $\gamma\geq 2\beta$, we have
\begin{equation}\label{CC412}
\int_{t_1}^{t_2}-G(x(t),t)\, dt\leq CR_T^{1+\frac{\beta}{4}+2\varepsilon}\cdot\bigg(t_2-t_1+1\bigg),
\end{equation}
where $x(t)$ is the flow line determined by $x(t)'=u(x(t),t)$.
\end{proposition}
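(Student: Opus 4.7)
The starting point is the pointwise representation of Lemma \ref{L46}, namely $G(x,t) = -\partial_t I(x,t) + J(x,t) + K(x,t)$, evaluated along the particle path $x(t)$ and integrated in $t$. The key observation is that the first piece $-\partial_t I$, combined with the summand $u_i(x)\partial_{x_i y_j}\widetilde N(x,y)\,\rho u_j(y)$ inside $J$, is precisely $-\tfrac{d}{dt}[I(x(t),t)]$ by the chain rule. Hence integration over $[t_1,t_2]$ telescopes that combination into the time-boundary values $I(x(t_1),t_1)-I(x(t_2),t_2)$, leaving us to estimate (i) $|I|$ at those two times, (ii) the remaining singular-integral piece $J_{\mathrm{sing}}$ of $J$ containing $\partial_{y_i y_j}\widetilde N(x,y)u_i(y)\rho u_j(y)$, and (iii) the entire $K$. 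The target is to show that (i) and (iii) contribute at most $CR_T^{1+\beta/4+2\varepsilon}$ independently of $t_2-t_1$, while (ii) contributes $CR_T^{1+\varepsilon}(t_2-t_1)$ when $\gamma<2\beta$ and folds into the constant multiplying $(t_2-t_1+1)$ when $\gamma\ge 2\beta$.

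For the endpoint term $I$, since $|\nabla_y\widetilde N(x,y)|\lesssim |x-y|^{-1}$, Hölder with a pair $(q,q')$ slightly away from $(2,2)$ gives $|I(x,t)|\le C\|\rho u\|_{L^{q'}}$; interpolating $\|\rho u\|_{L^{q'}}^{q'}\le \|\rho\|_{L^\infty}^{q'-1}\int\rho|u|^{q'}dx$ and choosing $q'=2+\nu$ with the $\nu=R_T^{-\beta/2}\nu_0$ furnished by Proposition \ref{L41} yields the bound $|I|\le CR_T^{\beta/4+\varepsilon}$. For the singular-integral piece $J_{\mathrm{sing}}$, the kernel $\partial^2_y\widetilde N\sim |x-y|^{-2}$ is Calderón--Zygmund; the plan is to apply the $L^p$ boundedness of the associated singular integral to $\rho u\otimes u$, controlling $\|\rho u\otimes u\|_{L^p}$ by $\|\rho\|_{L^\infty}\|u\|_{L^{2p}}^2$ and then invoking the interpolation $\|\nabla u\|_{L^{2p}}\lesssim R_T^{\kappa/2+\varepsilon}(1+A+B)^\kappa$ from Proposition \ref{L43}. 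In the regime $\gamma<2\beta$ the improved estimate \eqref{S413} removes the extra $R_T^{\kappa/2}$ factor, and combining with the $t$-integrability $\int(A^2 + B^2/(1+A^2))dt\le CR_T^{1+\varepsilon}$ from \eqref{CC344} and Proposition \ref{L44} gives $\int_{t_1}^{t_2}|J_{\mathrm{sing}}(x(t),t)|\,dt\le CR_T^{1+\varepsilon}(t_2-t_1)$.

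The $K$ term is the main obstacle and is where the conformal pull-back structure of $\widetilde N$ pays off. Its volume piece $\int_\Omega\nabla_y\widetilde N\cdot(\rho_s-\rho)\nabla f\,dy$ is routine via Hölder and Corollary \ref{L33}. The boundary piece
\begin{equation*}
\int_{\partial\Omega}\bigg(\tfrac{\partial\widetilde N}{\partial n}\, G + \mu\widetilde N(n^\bot\cdot\nabla)\omega\bigg)\, dS
\end{equation*}
is handled by tangentially integrating by parts to move the normal derivative off $\omega$ onto $\widetilde N$, and then exploiting the slip condition $\omega=-Ku\cdot n^\bot$ together with the explicit conformal formula for $\widetilde N$ (whose singularity on $\partial\Omega$ is purely logarithmic via the Riemann map) to cancel the otherwise non-integrable boundary contribution, exactly as in \cite[Lemmas 12 and 13]{FLL}. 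The resulting bound involves $\|G\|_{H^1}+\|\omega\|_{H^1}\le CR_T^{1/2}(1+A+B)$ from \eqref{S415}; using Cauchy--Schwarz in $t$, absorbing $B$ through $\int_{t_1}^{t_2} B^2/(1+A^2)\,dt\le CR_T^{1+\varepsilon}$, and interpolating at an exponent $p$ slightly above $2$ produces the constant $CR_T^{1+\beta/4+2\varepsilon}$, where the $\beta/4$ arises from the Gagliardo--Nirenberg exponent controlling the additional power of $\rho$ in $(2\mu+\lambda)^{-1}G$. Assembling (i)--(iii) gives \eqref{S432} when $\gamma<2\beta$. When $\gamma\ge 2\beta$, estimate \eqref{S413} is unavailable and one must invoke \eqref{S412}, so the $R_T^{\beta/4}$ loss propagates into the $(t_2-t_1)$-proportional part as well, producing the unified bound \eqref{CC412}. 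The hardest single step is the delicate cancellation of the singular boundary data of $\widetilde N$ against the Navier-slip condition, which is what allows the uniform-in-time constant $R_T^{1+\beta/4+2\varepsilon}$ rather than one growing with $(t_2-t_1)$.
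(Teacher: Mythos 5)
Your overall skeleton (representation from Lemma \ref{L46}, telescoping of the material-derivative part into endpoint values of $I$, endpoint bound via Proposition \ref{L41}, the $K$-term via the slip condition and \cite[Lemmas 12, 13]{FLL}, time integration via \eqref{CC344} and \eqref{S416}, and the $\gamma<2\beta$ versus $\gamma\ge 2\beta$ split coming from \eqref{S413} versus \eqref{S412}) matches the paper. But there is a genuine gap in your treatment of the convective part of $J$. By absorbing the summand $u_i(x)\partial_{x_iy_j}\widetilde N\,\rho u_j(y)$ into the total time derivative and leaving $J_{\mathrm{sing}}=\int\partial_{y_iy_j}\widetilde N(x,y)\,u_i(y)\rho u_j(y)\,dy$ to be handled by Calder\'on--Zygmund $L^p$ boundedness, you destroy the one cancellation the argument lives on, and you end up estimating the wrong norm: $L^p$ boundedness of the singular operator controls $\|J_{\mathrm{sing}}(\cdot,t)\|_{L^p}$, not its pointwise value at the moving point $x(t)$, and the quantity $\int_{t_1}^{t_2}-G(x(t),t)\,dt$ is taken along a single flow line, so only a sup-in-$x$ (or pointwise) bound is usable. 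Indeed, taken alone, the kernel $\partial_{y_iy_j}\widetilde N\sim|x-y|^{-2}$ tested against $\rho|u|^2\in L^p$ is not even absolutely convergent pointwise. The paper keeps the two summands of $J$ together precisely because their combination produces the difference $u(x)-u(y)$ plus a milder kernel $\sim|x-y|^{-1}$; then Morrey's inequality ($|u(x)-u(y)|\le C\|\nabla u\|_{L^p}|x-y|^{\kappa}$), a near/far splitting of the integral at a scale $\delta$ optimized against $A_1$, and the extra integrability $\int\rho|u|^{2+\nu}dx\le C$ with $\nu=R_T^{-\beta/2}\nu_0$ give the key bound $\int_\Omega|x-y|^{\kappa-2}\rho|u|\,dy\le CR_T^{1+\beta/4}A_1^{1-\kappa}$ (Lemma \ref{L48}); this is both where the sup-in-$x$ control and the exponent $\beta/4$ actually come from, not from the $(2\mu+\lambda)^{-1}G$ interpolation in the $K$-term as you suggest. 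Without reproducing this cancellation-plus-splitting step, your plan does not control the main term.

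Two smaller points: your endpoint bound $|I|\le CR_T^{\beta/4+\varepsilon}$ drops the factor coming from $\|\rho\|_{L^\infty}^{(1+\nu)/(2+\nu)}$ in $\|\rho u\|_{L^{2+\nu}}$ (the correct count is $\le CR_T^{1+\beta/4+\varepsilon}$ as in \eqref{CC435}, still within budget, so harmless); and in the boundary part of $K$ the derivative $(n^\bot\cdot\nabla)\omega$ is already tangential, so the mechanism is not moving a normal derivative onto $\widetilde N$ but rather inserting $\omega=-Ku\cdot n^\bot$ and converting to a volume integral via \eqref{CCC401}, then using $\|\nabla u\|_{L^4}$ from Proposition \ref{L43} and $\|G\|_{H^1}$ from \eqref{S415}; that part of your sketch is acceptable in spirit once stated this way.
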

\begin{proof}
Set $I(t)=I(x(t),t)$,
then \eqref{CC401} implies
\begin{equation*}
\begin{split}
-G\big(x(t),t\big)&=I'(t)-J\big(x(t),t\big)-K\big(x(t),t\big)\\
&\leq I'(t)+\|J(\cdot, t)\|_{L^\infty(\Omega)}+\|K(\cdot, t)\|_{L^\infty(\Omega)}.
\end{split}
\end{equation*}
We check each term in details.
\\

\textit{Estimates for $I(t)$.} With the help Proposition \ref{L41}, for $\nu=R_T^{-\frac{\beta}{2}}\nu_0$, we choose $q=(2+\nu)/(1+\nu)<2$ and declare that
\begin{equation}\label{CC435}
\begin{split}
\left|\int_\Omega\nabla_{y}\tilde{N}\cdot(\rho u)\, dy\right|
&\leq C\left(\int_\Omega|x-y|^{-q}dy\right)^{1/q}\left(\int_\Omega \rho^{2+\nu}|u|^{2+\nu}dy\right)^{\frac{1}{2+\nu}}\\
&\leq C\cdot\nu^{-1/q}R_T^{1/q}\left(\int_\Omega \rho |u|^{2+\nu}dy\right)^{\frac{1}{2+\nu}}\leq CR_T^{1+\frac{\beta}{4}+\varepsilon}.
\end{split}
\end{equation}

Consequently, we check that
\begin{equation}\label{CC402}
\int_{t_1}^{t_2}I'(t)dt=I(t_2)-I(t_1)\leq CR_T^{1+\frac{\beta}{4}+\varepsilon}.
\end{equation}
\\

\textit{Estimates for $\|K\|_{L^\infty}$.} Recalling that
$$K=\int_\Omega\nabla_y\widetilde{N}\cdot(\rho_s-\rho)\nabla f\,dy+\int_{\partial\Omega}\bigg(\mu\widetilde{N} \big(n^\bot\cdot\nabla\big)\omega+\frac{\partial \widetilde{N}}{\partial n}\cdot G\bigg)dS. $$

For the first term, by taking advantage of Proposition \ref{L33}, we deduce that
\begin{equation}\label{CC403}
\begin{split}
\int_\Omega\big|\nabla_y\widetilde{N}\cdot(\rho_s-\rho)\nabla f\big|dy
\leq C\|f\|_{H^2}\cdot\|\nabla_y\tilde{N}\|_{L^{4/3}}
\|\rho-\rho_s\|_{L^8}
\leq C.
\end{split}
\end{equation}

For boundary term, in view of \eqref{S105}, we  apply \eqref{CCC401} and set $p=4$ in Proposition \ref{L43} to declare that
\begin{equation}\label{CC405}
\begin{split}
\int_{\partial\Omega}\widetilde{N} \big(n^\bot\cdot\nabla\big)\omega\, dS
&=\int_{\partial\Omega}\widetilde{N}\big(n^\bot\cdot\nabla\big)(u\cdot n^\bot)\, dS\\
&\leq C \int_{\Omega}(|\widetilde{N}|+|\nabla\widetilde{N})|\cdot(|\nabla u|+|u|)\, dy,\\
&\leq C\bigg(\|\widetilde{N}\|_{L^{4/3}}+\|\nabla\widetilde{N}\|_{L^{4/3}}\bigg)\|\nabla u\|_{L^4}\\
&\leq CR_T^{1/2+\varepsilon}\bigg(1+A+B\bigg).
\end{split}
\end{equation}

Moreover, \cite[Lemma 12]{FLL} ensures that $\partial_n\widetilde{N}$ is bounded uniformly on $\partial\Omega$, thus in view of \eqref{S415}, we also have
\begin{equation}\label{CC406}
\int_{\partial\Omega}\left|\partial_n\widetilde{N}\cdot G\right|dS\leq C\int_{\partial\Omega}\big|G\big|\, dS\leq C\big\|G\big\|_{H^1}\leq CR_T^{1/2}\bigg(1+A+B\bigg).
\end{equation}

Combining \eqref{CC344} with \eqref{CC403}--\eqref{CC406}, and \eqref{S416}, we arrive at
\begin{equation}\label{CC409}
\begin{split}
\int_{t_1}^{t_2}\|K(\cdot, t)\|_{L^\infty(\Omega)}dt
&\leq C\int_{t_1}^{t_2}R_T^{1/2+\varepsilon}\bigg(1+A+B\bigg)dt\\
&\leq C\int_{t_1}^{t_2}\bigg(R_T^{1+\varepsilon}(1+A^2)+\frac{B^2}{1+A^2}\bigg)dt\\
&\leq CR_T^{1+\varepsilon}(t_2-t_1+1).
\end{split}
\end{equation}

\textit{Estimates for $\|J\|_{L^\infty}$.} According to \cite[Proposition 4]{FLL}, we have
\begin{equation*}
\begin{split}
\|J(\cdot, t)\|_{L^\infty(\Omega)}&\leq C\left(\sup_{x\in\overline{\Omega}}\int_\Omega\frac{\rho|u|^2(y)}{|x-y|}\, dy
+\sup_{x\in\overline{\Omega}}\int_\Omega\frac{|u(x)-u(y)|}{|x-y|^2} \cdot \rho|u|(y)\, dy\right),
\end{split}
\end{equation*}
where $x'\triangleq\varphi^{-1}\bigg(\varphi(x)/|\varphi(x)|\bigg)$.

Note that Proposition \ref{L33} and Sobolev embedding theorem ensure that
\begin{equation}\label{CC408}
\begin{split}
&\int_{t_1}^{t_2}\sup_{x\in\overline{\Omega}}\bigg(\int_\Omega\frac{\rho|u|^2(y)}{|x-y|}dy\bigg)dt\\
&\leq C\int_{t_1}^{t_2}\left(\int_\Omega |y|^{-3/4}dy\right)^{\frac{1}{4}} \|\rho\|_{L^8}\cdot\|u\|_{L^{16}}^2dt\\
&\leq\int_0^T\|\nabla u\|_{L^2}^2dt\leq C.
\end{split}
\end{equation}

In addition, next Lemma \ref{L47} guarantees that, when $\gamma<2\beta$ we have
\begin{equation}\label{CC410}
\int_{t_1}^{t_2}\sup_{x\in\overline{\Omega}}\left(\int_\Omega\frac{|u(x)-u(y)|}{|x-y|^2}\rho|u|(y)dy\right)dt\leq CR_T^{1+\varepsilon}(t_2-t_1)+C
R_T^{1+\frac{\beta}{4}+\varepsilon},
\end{equation}
while for $\gamma\geq 2\beta$, we have
\begin{equation}\label{CC413}
\int_{t_1}^{t_2}\sup_{x\in\overline{\Omega}}\left(\int_\Omega\frac{|u(x)-u(y)|}{|x-y|^2}\rho|u|(y)dy\right)dt\leq CR_T^{1+\frac{\beta}{4}+2\varepsilon}\bigg(t_2-t_1+1\bigg).
\end{equation}

Combining \eqref{CC408} and \eqref{CC410} provides that, when $\gamma<2\beta$
$$\int_{t_1}^{t_2}\|J(\cdot,t)\|_{L^\infty(\Omega)}\,dt\leq CR_T^{1+\varepsilon}(t_2-t_1)+CR_T^{1+\frac{\beta}{4}+\varepsilon},$$
which together with \eqref{CC402} and \eqref{CC409} yields \eqref{S432}.

Similarly, combining \eqref{CC402}, \eqref{CC409}, \eqref{CC408}, and \eqref{CC413} also leads to \eqref{CC412}. We therefore finish the proof of Proposition \ref{L47}.
\end{proof}

Now we derive the crucial estimates \eqref{CC410} and \eqref{CC413} via detailed discussions.
\begin{lemma}\label{L48}
Under the same setting as Proposition \ref{L47}. There is a constant $C$ depending only on $\varepsilon$, $\beta$, $\mu$, $\rho_0$, $u_0$, and $\Omega$, for any $0\leq t_1\leq t_2<\infty$, such that

\noindent when $\gamma<2\beta$, we have
\begin{equation*}
\int_{t_1}^{t_2}\sup_{x\in\overline{\Omega}}\left(\int_\Omega\frac{|u(x)-u(y)|}{|x-y|^2}\rho|u|(y)dy\right)dt\leq CR_T^{1+\varepsilon}(t_2-t_1)+C
R_T^{1+\frac{\beta}{4}+2\varepsilon},
\end{equation*}
while for $\gamma\geq 2\beta$, we have
\begin{equation*}
\int_{t_1}^{t_2}\sup_{x\in\overline{\Omega}}\left(\int_\Omega\frac{|u(x)-u(y)|}{|x-y|^2}\rho|u|(y)dy\right)dt\leq CR_T^{1+\frac{\beta}{4}+2\varepsilon}\bigg(t_2-t_1+1\bigg).
\end{equation*}
\end{lemma}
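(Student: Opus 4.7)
The plan is to reduce the integrand to the type handled by \eqref{CC435} in Proposition~\ref{L47} by trading one power of $|x-y|$ against the H\"older continuity of $u$. For $p>2$ and $\alpha=1-2/p$, Morrey's inequality gives $|u(x)-u(y)|\le C(p)|x-y|^\alpha\|\nabla u\|_{L^p}$, so that
$$\int_\Omega\frac{|u(x)-u(y)|}{|x-y|^2}\rho|u|(y)\,dy\le C\|\nabla u\|_{L^p}\int_\Omega\frac{\rho|u|(y)}{|x-y|^{2-\alpha}}\,dy.$$
I would then apply H\"older with the same conjugate pair $q=(2+\nu)/(1+\nu)$, $q'=2+\nu$ used in \eqref{CC435}, imposing $(2-\alpha)q<2$ (which forces $p$ as large as $R_T^{\beta/2}/\nu_0$), and using the same bound $\|\rho u\|_{L^{2+\nu}}\le CR_T^{(1+\nu)/(2+\nu)}$ extracted from Proposition~\ref{L41}. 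This yields the pointwise-in-$t$ estimate
$$\sup_{x\in\overline\Omega}\int_\Omega\frac{|u(x)-u(y)|}{|x-y|^2}\rho|u|(y)\,dy\le CR_T^{1/2+\beta/4+\varepsilon}\|\nabla u\|_{L^p}.$$

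For $\gamma\ge 2\beta$ I would close the argument by feeding this into the general estimate \eqref{S412} of Proposition~\ref{L43}. Since $p$ is large, $\kappa=1-2/p$ is close to $1$, so \eqref{S412} yields $\|\nabla u\|_{L^p}\le CR_T^{1/2+\varepsilon}(1+A+B)$, hence $\sup_x\int(\cdots)dy\le CR_T^{1+\beta/4+2\varepsilon}(1+A+B)$. Integrating in $t$, I would use $\int_{t_1}^{t_2}(1+A)\,dt\le C(t_2-t_1+1)$ (Cauchy--Schwarz plus \eqref{CC344}) and
$$\int_{t_1}^{t_2}B\,dt\le\Bigl(\int_{t_1}^{t_2}\tfrac{B^2}{1+A^2}dt\Bigr)^{1/2}\Bigl(\int_{t_1}^{t_2}(1+A^2)dt\Bigr)^{1/2}\le CR_T^{1/2+\varepsilon}(t_2-t_1+1)^{1/2}$$
by Proposition~\ref{L44}, absorbing the residual half-power of $R_T$ into the $R_T^{2\varepsilon}$ tolerance; this produces the desired $CR_T^{1+\beta/4+2\varepsilon}(t_2-t_1+1)$.

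For $\gamma<2\beta$ the coefficient of $(t_2-t_1)$ must be improved to $R_T^{1+\varepsilon}$, so I would split $\int_\Omega=\int_{|x-y|<\delta}+\int_{|x-y|\ge\delta}$ with $\delta$ a small negative power of $R_T$. On the near region I apply the Morrey--H\"older argument above, which now picks up an additional factor $\delta^\alpha$ and, after choosing $\delta$ appropriately, produces a constant-in-time remainder of size $CR_T^{1+\beta/4+2\varepsilon}$. On the far region $|x-y|\ge\delta$ the kernel is bounded by $\delta^{-2}$, and $\int_\Omega|u(x)-u(y)|\rho|u|(y)\,dy\le(|u(x)|+\|u\|_{L^\infty})\|\rho u\|_{L^1}$; I would bound $\|u\|_{L^\infty}\le C\|\nabla u\|_{L^{p_1}}$ via Sobolev with $p_1$ close to $2$ and invoke the sharper estimate \eqref{S413}, which in the regime $\gamma<2\beta$ and $\kappa_1\le\varepsilon$ gives $\|\nabla u\|_{L^{p_1}}\le CR_T^{2\varepsilon}A^{1-\kappa_1}(1+A+B)^{\kappa_1}$. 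Time-integration of this piece against Proposition~\ref{L44} and \eqref{CC344} then yields the linear-in-time contribution $CR_T^{1+\varepsilon}(t_2-t_1)$.

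The hardest point will be the exponent bookkeeping in the last step. The factor $R_T^{\beta/4}$ generated at the diagonal by $\nu^{-1/q}$ is unavoidable, so the whole scheme depends on routing it exclusively into the constant remainder $CR_T^{1+\beta/4+2\varepsilon}$. The structural reason this is possible precisely when $\gamma<2\beta$ is that the sharper bound \eqref{S413} removes the $R_T^{1/2}$ pressure-loss penalty inherent in \eqref{S412}, which is exactly what is needed to upgrade the linear-in-time coefficient from $R_T^{1+\beta/4+\varepsilon}$ down to $R_T^{1+\varepsilon}$.
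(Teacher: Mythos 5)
Your scheme breaks down at the exponent bookkeeping, and the decisive gap is in how $B$ is integrated in time. Because you push $p$ up to order $R_T^{\beta/2}$ (so $\kappa=1-2/p$ is close to $1$), the bound \eqref{S412} you invoke is \emph{linear} in $B$ with the full prefactor attached: $\sup_x\int(\cdots)\,dy\le CR_T^{1+\beta/4+2\varepsilon}(1+A+B)$. The only time-integrability of $B$ available is Proposition \ref{L44}, $\int B^2/(1+A^2)\,dt\le CR_T^{1+\varepsilon}$, so Cauchy--Schwarz gives $\int_{t_1}^{t_2}B\,dt\le CR_T^{(1+\varepsilon)/2}(t_2-t_1+1)^{1/2}$, and multiplying by your prefactor produces $CR_T^{3/2+\beta/4+O(\varepsilon)}(t_2-t_1+1)^{1/2}$. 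The ``residual half-power'' is a genuine $R_T^{1/2}$, not absorbable into $R_T^{2\varepsilon}$; with it, the Zlotnik step in Section \ref{SS41} would need $\beta>2$ rather than $\beta>4/3$, so your Case $\gamma\ge2\beta$ does not close. The paper avoids exactly this by keeping $\kappa\le\varepsilon$ small ($p$ near $2$): then $B$ enters only to the power $\kappa$, Young's inequality routes it additively into $B^2/(1+A^2)$, decoupled from the $R_T^{1+\beta/4}$ factor, which instead multiplies $A^2$ whose time integral is uniformly bounded by \eqref{CC344}. Note also that with $p\sim R_T^{\beta/2}$ the constants $C(p)$ in Morrey's inequality, in Lemma \ref{L22}, and in Proposition \ref{L43} (coming from Calder\'on--Zygmund and div--curl estimates, which grow with $p$) become further uncontrolled powers of $R_T$ that you never track.

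Your Case $\gamma<2\beta$ has a second gap: the claimed near-diagonal gain of ``an additional factor $\delta^\alpha$'' is not there. Keeping the H\"older pair $q=(2+\nu)/(1+\nu)$, $2+\nu$, the restriction to $|x-y|<\delta$ gains only $\delta^{(2-(2-\alpha)q)/q}$, and since $\alpha<1$ and $q\approx2$ the integrability margin $2-(2-\alpha)q$ is at most of order $\nu=\nu_0R_T^{-\beta/2}$; hence for any $\delta$ that is a fixed negative power of $R_T$ one has $\delta^{O(\nu)}\to1$ and nothing is gained. The near piece therefore still carries the linear-in-time coefficient $R_T^{1+\beta/4+2\varepsilon}$ (plus the $B$-loss above) and cannot be turned into a constant-in-time remainder. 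The paper's mechanism is different: on $|x-y|<2\delta$ it bounds $\rho|u|$ by $R_T\|u\|_{L^{1/q}}$ with the tiny exponent $q$ of \eqref{S439}, so the gain $\delta^{\kappa-2q}$ has exponent comparable to $\kappa$ rather than $\nu$, and it chooses $\delta$ \emph{time-dependently} so that both regions yield $R_T^{1+\beta/4}A_1^{1-\kappa}$ (see \eqref{S435}--\eqref{S440}); the smallness in time then comes from powers of $A_1$, which are square-integrable, while the dichotomy $\gamma<2\beta$ versus $\gamma\ge2\beta$ enters only through whether \eqref{S413} or \eqref{S412} is used to bound $\|\nabla u\|_{L^p}$ with the same small $\kappa$.
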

\begin{proof}
We still set $\kappa=1-2/p\leq\varepsilon$.
 According to Morry's inequality (see \cite[Chapter 5, Theorem 5]{2010Partial}), there is a constant $C$ depending only on $p$ and $\Omega$ such that, for any $u\in W^{1,p}(\Omega)$ and $x,\, y\in \overline{\Omega}$, we have
\begin{equation*}
\big|u(x)-u(y)\big| \leq C\|\nabla u\|_{L^p}\cdot|x-y| ^{\kappa},
\end{equation*}
which leads to
\begin{equation}\label{S434}
\begin{split}
\int_\Omega\frac{|u(x)-u(y)|}{|x-y|^2}\rho|u|(y)dy
\leq C\|\nabla u\|_{L^p}\cdot\int_\Omega |x-y|^{-2+\kappa} \rho|u|(y)dy.
\end{split}
\end{equation}

Then, for $\delta>0 $ and $q<\kappa/2$, which will be determined later, on the one  hand,  we use  \eqref{S202}  to get
\begin{equation}\label{S435}
\begin{split}
&\int_{| x-y| <2\delta}| x-y| ^{\kappa-2}\rho| u| (y)\, dy\\
&\leq CR_T
\left(\int_{|y| <2\delta} |y| ^{(\kappa-2)/(1-q)} dy\right)^{1-q}\cdot
\| u\|_{L^{1/q}}\\
&\leq CR_T
\delta^{\kappa-2q}\cdot \bigg(q^{-1/2}\| u\|_{H^1}\bigg)\\
&\leq C q^{-1/2}R_T \cdot\bigg(A_1
\delta^{\kappa-2q}\bigg).
\end{split}
\end{equation}
On the other hand, for $\nu=R_T^{-\frac{\beta}{2}}\nu_{0}$ as in \eqref{S406}, we use Proposition \ref{L41} to derive
\begin{equation*}
\begin{split}
&\int_{|x-y| >\delta}|x-y| ^{\kappa-2}\cdot\rho| u| (y)\, dy\\
&\leq
C\left(\int_{|y| >\delta} |y| ^{\frac{2+\nu}{1+\nu}\cdot(2-\kappa)} dy\right)^{\frac{1+\nu}{2+\nu}}
\left(\int_\Omega\rho^{2+\nu}|u|^{2+\nu}dx\right)^{\frac{1}{2+\nu}}\\
&\leq CR_T \cdot
\delta^{\kappa-2/(2+\nu)}.
\end{split}
\end{equation*}

Now, we choose $\delta >0$ such that
\begin{equation*}
 \delta^{\kappa-2/(2+\nu)} =
A_1^{1-\kappa},
\end{equation*}
which in particular implies
\begin{equation}
\label{S438}
A_1\delta^{\kappa-2q} =A_1^{1-\kappa},
\end{equation}
provided we set
\begin{equation}\label{S439}
2q=\frac{\nu}{2+\nu}\cdot\frac{\kappa}{1-\kappa} \in \left(0, \kappa\right).
\end{equation}

Then, it follows from \eqref{S435}--\eqref{S438} that
\begin{equation}\label{S440}
\begin{split}
&\int_\Omega |x-y|^{\kappa-2} \rho|u|(y)\,dy\\
&\leq \left(\int_{|x-y|<2\delta}+\int_{|x-y|>\delta}\right)|x-y|^{\kappa-2}\rho|u|(y)\,dy\\
&\leq C \bigg(q^{-1/2}R_T\bigg)\cdot A_1^{1-\kappa}\leq CR_T^{1+\beta/4}\cdot A_1^{1-\kappa} ,
\end{split}
\end{equation}
where in the last line we have used $q^{-1/2}\leq C(p)\nu^{-1/2}\leq C(p)R_T^{\beta/4} $ due to \eqref{S439}. Now there are two cases we must distinguish.
\\

\textit{Case 1. $\gamma<2\beta$.}
By virtue of \eqref{S413}, for $\kappa<\varepsilon$ and $\gamma<2\beta$, we have
\begin{equation*}
\begin{split}
\|\nabla u\|_{L^p}
\leq C R_T^{\varepsilon}\cdot A^{1-\kappa}(1+A+B)^{\kappa},
\end{split}
\end{equation*}
which together with \eqref{S434} and \eqref{S440} gives
\begin{equation*}
\begin{split}
&\int_\Omega\frac{|u(x)-u(y)|}{|x-y|^2}\cdot\rho|u|(y)\, dy\\
&\leq CR_T^{1+\frac{\beta}{4}+\varepsilon}A^{2-2\kappa}(1+A+B)^\kappa\\
&\leq C\bigg(1+R_T^{1+\frac{\beta}{4}+2\varepsilon}A^2+\frac{B^2}{1+A^2}\bigg),
\end{split}
\end{equation*}
provided $\kappa<\varepsilon/(4+\beta).$

Integrating the above inequality with respect to $t$, we apply \eqref{S416} and arrive at
\begin{equation*}
\begin{split}
&\int_{t_1}^{t_2}\sup_{x\in\Omega}\left(\int_\Omega\frac{|u(x)-u(y)|}{|x-y|^2}\cdot\rho|u|(y)\, dy\right) dt\\
&\leq C\int_{t_1}^{t_2}\bigg(1+R_T^{1+\frac{\beta}{4}+2\varepsilon}A^2+\frac{B^2}{1+A^2}\bigg)\, dt\\
&\leq C\bigg(t_2-t_1
+R_T^{1+\frac{\beta}{4}+2\varepsilon}\bigg),
\end{split}
\end{equation*}
which finishes the case $\gamma<2\beta$.
\\

\textit{Case 2. $\gamma\geq 2\beta$.}
The arguments are similar. Observing that $\kappa<\varepsilon$, we apply \eqref{S412} and deduce that
$$\|\nabla u\|_{L^{p}}\leq CR_T^{\varepsilon}\cdot (1+A)^{1-\kappa}(1+A+B)^{\kappa},$$
which together with \eqref{S434} and \eqref{S440} gives
\begin{equation*}
\begin{split}
&\int_\Omega\frac{|u(x)-u(y)|}{|x-y|^2}\cdot\rho|u|(y)\, dy\\
&\leq CR_T^{1+\frac{\beta}{4}+\varepsilon}(A^{1-\kappa}+A^{2-2\kappa})(1+A+B)^\kappa\\
&\leq C\bigg(R_T^{1+\frac{\beta}{4}+2\varepsilon}(1+A^2)+\frac{B^2}{1+A^2}\bigg).
\end{split}
\end{equation*}
provided $\kappa<\varepsilon/(4+\beta).$

Integrating the above inequality with respect to $t$, we apply \eqref{S416} and arrive at
\begin{equation*}
\begin{split}
&\int_{t_1}^{t_2}\sup_{x\in\overline{\Omega}}\left(\int_\Omega\frac{|u(x)-u(y)|}{|x-y|^2}\cdot\rho|u|(y)\, dy\right) dt\\
&\leq C\int_{t_1}^{t_2}\bigg(R_T^{1+\frac{\beta}{4}+2\varepsilon}(1+A^2)+\frac{B^2}{1+A^2}\bigg)\, dt\\
&\leq CR_T^{1+\frac{\beta}{4}+2\varepsilon}\bigg(t_2-t_1
+1\bigg),
\end{split}
\end{equation*}
which finishes the case $\gamma>2\beta$.
We therefore complete the proof of Lemma \ref{L48}, thus Proposition \ref{L47} is valid as well.
\end{proof}

With all preparations done, we can derive the uniform upper bound of $\rho$.
\\

{\it{ Proof of \eqref{S111}.}}
Recalling that \eqref{S306} gives
\begin{equation}\label{S524}
\frac{d}{dt}\theta(\rho)+(P-P_s) =-G,
\end{equation}
where $G=(2\mu+\lambda)\mathrm{div}u-(P-P_s)$. Then, there are two cases we must investigate.
\\

\textit{Case 1. $\gamma<2\beta$.} Note that \eqref{S432} ensures
$$\int_{t_1}^{t_2}-G(x(t),t)\, dt\leq CR_T^{1+\varepsilon}(t_2-t_1)+CR_T^{1+\frac{\beta}{4}+2\varepsilon}.$$
Thus we integrate \eqref{S524} with respect to $t$, and apply the Zlotnik's inequality in Lemma \ref{L26} together with \eqref{S432} to infer that
\begin{equation}\label{S533}
R_T^\beta\leq CR_T^{\max\{1+\frac{\beta}{4}+2\varepsilon,(1+\varepsilon)\frac{\beta}{\gamma}\}}.
\end{equation}

Recalling that $\beta>\frac{4}{3}$ and $\gamma>1$, we set $\varepsilon<\min\{(3\beta-4)/8,\, \gamma-1\}$,  then \eqref{S533} in particular implies
\begin{equation*}
\sup_{0\leq t\leq T}\|\rho\|_{L^\infty}\leq C.
\end{equation*}
\\

\textit{Case 2. $\gamma\geq 2\beta$.} Note that \eqref{CC412} ensures
$$\int_{t_1}^{t_2}-G(x(t),t)\, dt\leq CR_T^{1+\frac{\beta}{4}+2\varepsilon}\bigg(t_2-t_1+1\bigg),
$$
Then we still integrate \eqref{S524} with respect to $t$, and make use of the Zlotnik's inequality in Lemma \ref{L26} together with \eqref{CC412} to give 
\begin{equation}\label{CC414}
R_T^\beta\leq CR_T^{\max\{1+\frac{\beta}{4}+2\varepsilon,\,(1+\frac{\beta}{4}+2\varepsilon)\frac{\beta}{\gamma}\}}.
\end{equation}
However, $\gamma>2\beta>2$ enforces $(1+\frac{\beta}{4}+2\varepsilon)\cdot\beta/\gamma\leq(1+\frac{\beta}{4}+2\varepsilon)$. Thus, \eqref{CC414} reduces to
\begin{equation}\label{CC415}
R_T^\beta\leq CR_T^{1+\frac{\beta}{4}
+2\varepsilon}.
\end{equation}

Recalling that $\beta>4/3$, we set $\varepsilon<(3\beta-4)/8$, and \eqref{CC415} makes sure
\begin{equation*}
\sup_{0\leq t\leq T}\|\rho\|_{L^\infty}\leq C.
\end{equation*}

We therefore finish the proof of \eqref{S111}.
\thatsall

\subsection{Exponential decay}\label{SS42}
\quad We apply the uniform upper bound of $\rho$ \eqref{S111} to derive the exponential decay of the system \eqref{S112} in several steps. The first Proposition deals with the density.

\begin{proposition}\label{P42}
Suppose \eqref{CC330} is valid, then under the conditions of Theorem \ref{T11}, $\forall T\in[0,\infty)$ and $\forall p\in[1,\infty)$, there are constants $C$ and $\xi_1$ depending only on $p$, $\beta$, $\gamma$, $\mu$, $\rho_0$, $u_0$, and $\Omega$, such that
\begin{equation}\label{CC416}
\sup_{0\leq t\leq T}e^{\xi_1 t}\bigg(\|\rho-\rho_s\|_{L^p}+\|\nabla u\|_{L^2}\bigg) \leq C.
\end{equation}
\end{proposition}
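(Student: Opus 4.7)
The strategy is to build a Lyapunov functional equivalent to the relative entropy and show it decays exponentially, by combining the natural energy identity with a refined Bogovski-type estimate that is now available once $\rho$ is uniformly bounded; an interval argument then promotes the resulting integrated decay of $\|\nabla u\|_{L^2}^2$ to a pointwise exponential decay.

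First, using the explicit formula $\tfrac{\gamma}{\gamma-1}\rho_s^{\gamma-1}=f+C_0$ from \eqref{CCC101} together with mass conservation $\int(\rho-\rho_s)\,dx=0$, I would recast the energy identity \eqref{S304} as
\begin{equation*}
\frac{d}{dt}E_0(t)+cA_1^2(t)\le 0,\qquad E_0(t)\triangleq\int_\Omega\Bigl(\tfrac{1}{2}\rho|u|^2+\tfrac{1}{\gamma-1}\bigl[\rho^\gamma-\rho_s^\gamma-\gamma\rho_s^{\gamma-1}(\rho-\rho_s)\bigr]\Bigr)dx,
\end{equation*}
the contribution of the constant $C_0$ vanishing by conservation of mass. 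Next I would revisit the Bogovski argument of Proposition \ref{L35}: the uniform bound \eqref{S111} together with \eqref{CC330} removes the troublesome truncation term $\int(\rho-M_1)_+^{q}\,dx$ from the estimate of $J_4$ because $\lambda=\rho^\beta\in L^\infty$ is now uniformly controlled, and $J_2,J_3$ are absorbed into $CA_1^2+\varepsilon A_2^2$ in the same way. This yields the strengthened inequality $A_2^2(t)\le -C_0 D'(t)+CA_1^2(t)$, with $D(t)=\int\rho u\cdot \mathcal B(\rho_s-\rho)\rho_s^{-1}dx$ as in Proposition \ref{L35}.

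Define the Lyapunov functional $\tilde L(t)\triangleq E_0(t)+\varepsilon D(t)$ for $\varepsilon>0$ to be fixed. Since $\rho_s$ is bounded above and below away from zero and $\rho\in L^\infty$, Cauchy--Schwarz combined with Lemma \ref{L25} gives $|D(t)|\le C\|\sqrt\rho u\|_{L^2}\|\rho-\rho_s\|_{L^2}\le c_\ast E_0(t)$, so $\tilde L\sim E_0$ for $\varepsilon$ small. Poincar\'e (since $u\cdot n=0$ on $\partial\Omega$) and the uniform $\rho$-bound give $\int\rho|u|^2\,dx\le CA_1^2$, while the Taylor expansion of the relative-entropy density combined with the uniform bounds on $\rho,\rho_s$ yields $H(\rho|\rho_s)\le C\|\rho-\rho_s\|_{L^2}^2\le CA_2^2$. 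Hence $E_0\le C(A_1^2+A_2^2)$, so combining these bounds produces, for $\varepsilon$ small enough, $\tilde L'(t)+\xi_1\tilde L(t)\le 0$ for some $\xi_1>0$. Gronwall then gives $E_0(t)\le Ce^{-\xi_1 t}$, hence $\|\rho-\rho_s\|_{L^2}\le Ce^{-\xi_1 t/2}$, and interpolation with the uniform $L^\infty$ bound delivers the claimed $L^p$ decay of $\rho-\rho_s$ for every $p\in[1,\infty)$.

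The main obstacle is the pointwise decay of $\|\nabla u\|_{L^2}$: the previous step only yields the integrated bound $\int_t^{t+1}A_1^2\,ds\le CE_0(t)\le Ce^{-\xi_1 t}$. To upgrade it I would revisit Proposition \ref{L44}: with $R_T\le C$ now a genuine constant, the proof there produces a clean differential inequality of the form
\begin{equation*}
\frac{d}{dt}E_1(t)+cB^2(t)\le C\bigl(A_1^2(t)+A_2^2(t)\bigr),\qquad E_1(t)\sim 1+A_1^2(t),
\end{equation*}
where the boundary contribution $\int_{\partial\Omega}G\langle\dot u,n\rangle\,dS$ is absorbed via the Cai--Li tangential identity \eqref{CCC401} and the cubic term $\int(|G|+|\omega|)|\nabla u|^2\,dx$ is handled by Proposition \ref{L43}, whose $R_T$-factors are now benign. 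A standard interval argument then closes the proof: given $t\ge 1$, pick $t^\ast\in[t-1,t]$ with $A_1^2(t^\ast)\le Ce^{-\xi_1 t}$ (possible by the integrated bound), and integrate the above differential inequality on $[t^\ast,t]$, whose right-hand side is $O(e^{-\xi_1 t})$ by the previous step, obtaining $\|\nabla u(t)\|_{L^2}^2\le Ce^{-\xi_1 t}$. The delicate part is extracting a differential inequality for a quantity equivalent to $\|\nabla u\|_{L^2}^2$ whose right-hand side is controlled purely by the already-decaying $A_1^2+A_2^2$; in the vacuum setting $\rho$ cannot be inverted, so the boundary and cubic terms must be absorbed via the uniform $\rho$-bound and \eqref{CCC401} rather than by bare Poincar\'e-type inequalities.
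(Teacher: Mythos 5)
Your Step 1 is essentially the paper's argument: the paper also uses the explicit formula \eqref{CCC101} plus mass conservation to turn \eqref{S304} into a relative-entropy inequality, re-runs the Bogovski\u{\i} estimate of Proposition \ref{L35} with \eqref{S111} so that $J_4\le CA_1^2+\varepsilon A_2^2$ (no truncation term), and forms the Lyapunov functional $\mathbf{E}-\delta D$ with $0\le \mathbf{E}-\delta D\le CA^2$, obtaining both $\|\rho-\rho_s\|_{L^2}\le Ce^{-\xi_1 t}$ and the weighted bound $\int_0^T e^{2\xi_1 t}A^2\,dt\le C$ (equation \eqref{CC427}); up to a sign convention for $D$, your derivation of $\tilde L'+\xi_1\tilde L\le 0$ is the same computation.

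Your Step 2, however, has a genuine gap as written. You integrate $\frac{d}{dt}E_1+cB^2\le C(A_1^2+A_2^2)$ over $[t^\ast,t]$ and claim $\|\nabla u(t)\|_{L^2}^2\le Ce^{-\xi_1 t}$, but with your asserted equivalence $E_1\sim 1+A_1^2$ the argument only yields $E_1(t)\le E_1(t^\ast)+Ce^{-\xi_1 t}\le C\bigl(1+A_1^2(t^\ast)\bigr)+Ce^{-\xi_1 t}\le C$, i.e.\ a uniform bound on $\|\nabla u\|_{L^2}$ and no decay: the additive constant $1$ in the equivalence cannot be removed by integrating a differential inequality whose right-hand side decays. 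The missing observation is that once \eqref{S111} holds, the functional of Proposition \ref{L44} is comparable to $A^2$ \emph{without} an $O(1)$ constant: $\|P-P_s\|_{L^2}\le C\|\rho-\rho_s\|_{L^2}\le CA_2$ gives $E=E_1+E_2\le CA^2$ (see \eqref{CC424}--\eqref{CC425}), and conversely $\|\nabla u\|_{L^2}^2\le C\bigl(E+\|\rho-\rho_s\|_{L^2}^2\bigr)$, which is exactly \eqref{CC426}. With this two-sided control either route closes: your interval/pigeonhole argument (choosing $t^\ast$ with $A^2(t^\ast)\le Ce^{-\xi_1 t}$ and using $\int_{t-1}^{t}(A_1^2+A_2^2)\,ds\le Ce^{-\xi_1 t}$), or, as the paper does, multiplying $E'+B^2\le CA^2$ by $e^{2\xi_1 t}$ and invoking \eqref{CC427} to get $\sup_t e^{2\xi_1 t}E\le C$ directly. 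So the route differs only cosmetically from the paper's weighted Gronwall, but you must replace ``$E_1\sim 1+A_1^2$'' by the sharper comparison above for the decay to transfer; the final interpolation to general $p\in[1,\infty)$ via \eqref{S111} is fine.
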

\begin{proof}
In view of \eqref{S111}, once we show \eqref{CC416} is true for $p=2$, then by interpolation, \eqref{CC416} is valid for other $1\leq p<\infty$ as well. Thus we will derive
$$\sup_{0\leq t\leq T}e^{\xi_1 t}\bigg(\|\rho-\rho_s\|_{L^2}+\|\nabla u\|_{L^2}\bigg) \leq C.$$

\textit{Step 1.} We establish the exponential decay of $\|\rho-\rho_s\|_{L^2}$.
By the same arguments as in Proposition \ref{L35}, we rewrite \eqref{S101}$_2$ as
\begin{equation}\label{CC417}
\nabla(\rho^\gamma-\rho_s^\gamma)+(1-\rho\cdot\rho_s^{-1})\nabla\rho_s^\gamma=R,
\end{equation}
where the remainder $R$ is given by
$$R=-\frac{\partial}{\partial t}(\rho u)-\mathrm{div}(\rho u\otimes u)+\mu\Delta u+\nabla\big((\mu+\lambda)\mathrm{div}u\big)=\sum_{i=1}^4R_i.$$

Note that $\mathcal{B}(\rho-\rho_s)$ is well defined, where $\mathcal{B}$ is given by Lemma \ref{L24}. Thus multiplying \eqref{CC417} by $\rho_s^{-1}\mathcal{B}(\rho-\rho_s)$ and integrating over $\Omega$, we take advantage of the
arguments through \eqref{CC331}--\eqref{CC341} to argue that, for $\varepsilon<(2C)^{-1}$,
\begin{equation}\label{CC434}
A_2^2\leq D'+C\bigg(A_1^2+J_4\bigg).
\end{equation}
Recalling that
\begin{equation*}
\begin{split}
A_1^2&=\int_\Omega\bigg((\mu+\lambda)(\mathrm{div}u)^2+|\nabla u|^2\bigg)dx,\\
A_2^2&=\int_\Omega(\rho+1)^{\gamma-1}(\rho-\rho_s)^2dx,\\
D&=\int_\Omega\rho u\cdot\mathcal{B}(\rho_s-\rho)\cdot\rho_s^{-1}dx,\\
J_4&=\int_\Omega(\mu+\lambda)\mathrm{div}u\cdot\mathrm{div}\bigg(\mathcal{B}(\rho_s-\rho)\cdot\rho_s^{-1}\bigg)dx.
\end{split}
\end{equation*}

Observe that by virtue of \eqref{S111} and Lemma \ref{L24}, we have
\begin{equation}\label{CC421}
\big|D(t)\big|\leq C\bigg(\|\rho^{\frac{1}{2}}u\|_{L^2}^2+\|\rho-\rho_s\|_{L^2}^2\bigg).
\end{equation}

Moreover, different from \eqref{CC341}, note that \eqref{S111} together with Lemma \ref{L24} directly implies that
$$J_4\leq CA_1^2+ \varepsilon A_2^2.$$
which substituted into \eqref{CC434} yields
\begin{equation}\label{CC418}
A_2^2\leq D'+CA_1^2.
\end{equation}

Then, let us go back to Proposition \ref{L31}. Note that \eqref{S304} provides
\begin{equation}\label{CC419}
\begin{split}
\textbf{E}(t)'+A_1^2\leq 0,
\end{split}
\end{equation}
where the energy term $\textbf{E}(t)$ can be written as
$$\textbf{E}(t)=\int_\Omega\bigg(\frac{1}{2}\rho|u|^2+\frac{1}{\gamma-1}(\rho^\gamma-\rho_s^\gamma)-(\rho-\rho_s)f\bigg)dx.$$

Observe that Corollary \ref{C21} and \eqref{CCC101} ensure that, for some constant $C_0$,
$$\frac{\gamma}{\gamma-1}\rho_s^{\gamma-1}=f+C_0.$$
Consequently, the conservation of mass gives
\begin{equation*}
\begin{split}
\int_\Omega f\cdot(\rho-\rho_s)dx=\int_\Omega(f+C_0)\cdot (\rho-\rho_s)dx=\int_\Omega\frac{\gamma}{\gamma-1}\rho_s^{\gamma-1}\cdot(\rho-\rho_s)dx.
\end{split}
\end{equation*}

Thus, we transform $\textbf{E}'(t)$ into
\begin{equation*}
\begin{split}
\textbf{E}'(t)&=\frac{d}{dt}\int_\Omega\bigg(\frac{1}{2}\rho|u|^2+\frac{1}{\gamma-1}\big(\rho^\gamma-\rho_s^\gamma-\gamma\rho_s^{\gamma-1}(\rho-\rho_s)\big)\bigg)dx\\
&=\frac{d}{dt}\int_\Omega\bigg(\frac{1}{2}\rho|u|^2+\frac{1}{\gamma-1}H(\rho,\rho_s)\bigg)dx.
\end{split}
\end{equation*}

Moreover, according to the discussion in \eqref{S338} and \eqref{CC329}, when \eqref{S111} is valid, we declare that there is a constant $C$ such that
\begin{equation}\label{CC420}
C^{-1}(\rho-\rho_s)^2\leq H(\rho,\, \rho_s)\leq C(\rho-\rho_s)^2.
\end{equation}

Now, multiplying \eqref{CC418} by $\delta$ and adding the resulting inequality to \eqref{CC419} give us
\begin{equation}\label{CC422}
\textbf{E}_0'+CA^2\leq 0,
\end{equation}
where  $\textbf{E}_0(t)=\textbf{E}(t)-\delta D(t)$. By setting $\delta<(4C)^{-1}$, \eqref{CC421} together with \eqref{CC420} yields
$$0\leq \textbf{E}_0\leq CA^2.$$

Thus, we apply Gronwall's inequality to \eqref{CC422} and deduce that, for $0\leq T<\infty$, there are constants $C$ and $\xi_1$,
\begin{equation}\label{CC427}
\sup_{0\leq t\leq T}\bigg(e^{4\xi_1 t}\|\rho-\rho_s\|_{L^2}^2\bigg)+\int_0^Te^{2\xi_1 t}A^2dt \leq C.
\end{equation}

\textit{Step 2.} We derive the exponential decay of $\|\nabla u\|_{L^2}$.
Let us go back to Proposition \ref{L44} and apply \eqref{S422} together with \eqref{S111} and \eqref{S416} to give 
\begin{equation}\label{CC423}
E'(t)+B^2\leq CA^2,
\end{equation}
where $E(t)=E_1(t)+E_2(t)$ is given by \eqref{CC424} and \eqref{CC425}.

According to \eqref{S111}, we also have $E(t)\leq A^2(t)$. Thus, multiplying \eqref{CC423} by $e^{2\xi_1 t}$ and integrating with respect to $t$, we apply \eqref{CC427} to deduce that, for $0\leq T<\infty$,
\begin{equation}\label{CC429}
\sup_{0\leq t\leq T}\bigg(e^{2\xi_1 t} E(t)\bigg)+\int_0^Te^{2\xi_1 t}B^2\, dt\leq C,
\end{equation}
which together with Lemma \ref{L24} gives
\begin{equation}\label{CC426}
\sup_{0\leq t\leq T}\bigg(e^{2\xi_1 t}\|\nabla u\|_{L^2}^2\bigg)\leq \sup_{0\leq t\leq T}Ce^{2\xi_1 t}\bigg(E(t)+\|\rho-\rho_s\|_{L^2}^2\bigg)\leq C.
\end{equation}

Combining \eqref{CC427} with \eqref{CC426}, we arrive at \eqref{CC416} for $p=2$. The general cases follow from interpolation arguments, hence we  finish the proof.
\end{proof}

The final step is to declare $\|\nabla u\|_{L^p}$ is uniformly bounded for any $1\leq p<\infty$. Once it is done, \eqref{S112} follows from  \eqref{CC416} and the interpolation arguments.

We need next uniform estimates which improve \cite[Lemma 15]{FLL}. The calculations are rather standard at present stage, so we merely sketch it. The complete details can be found in \cite[Lemma 15]{FLL}.
\begin{proposition}\label{L52} Under the conditions of \eqref{S111}, there is a positive constant $C $ depending only on $\Omega$, $\mu$, $\beta$, $\gamma$,  $\|\rho_0\|_{L^\infty}$ and $\|u_0\|_{H^1}$ such that
\begin{equation}\label{S505}
\sup_{0\leq t\leq T}\sigma B^2(t) + \int^T_0 \sigma\| \nabla\dot{u}\| _{L^2}^2dt \leq C,
\end{equation}
where $\sigma(t)\triangleq\min\{1, t\}$.
\end{proposition}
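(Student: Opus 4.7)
\medskip

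\noindent\textbf{Proof proposal for Proposition \ref{L52}.}
The plan is to derive a higher order energy identity for $\dot u$ by applying the operator $\partial_t+\mathrm{div}(u\,\cdot\,)$ to the momentum equation \eqref{S101}$_2$ and then testing against $\sigma\dot u$. Writing \eqref{S101}$_2$ component-wise as
\begin{equation*}
\rho\dot u_j=\partial_j G+\mu(\nabla^\bot\omega)_j+(\rho-\rho_s)\partial_j f,
\end{equation*}
I would combine it with $\partial_t\rho+\mathrm{div}(\rho u)=0$ so that $(\partial_t+\mathrm{div}(u\,\cdot\,))(\rho\dot u_j)=\rho\ddot u_j$. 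Applying the same operator to the right-hand side and using the standard Hoff commutator identities (i.e.\ $\partial_t\partial_j P+\mathrm{div}(u\partial_j P)=-\partial_j(\gamma P\,\mathrm{div}u)+\partial_j u^i\partial_i P-\partial_i u^i\partial_j P$, and analogous identities for the viscous terms) produces a transport equation for $\dot u$ of the form
\begin{equation*}
\rho\ddot u_j-\mu\Delta\dot u_j-\partial_j\bigl((\mu+\lambda)\mathrm{div}\dot u\bigr)=\mathcal{R}_j,
\end{equation*}
where $\mathcal{R}_j$ collects first-order quantities in $\nabla u$, $P$, $\lambda$, and $\nabla f$.

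Next I would multiply by $\sigma\dot u_j$ and integrate over $\Omega$. The left-hand side produces
\begin{equation*}
\tfrac12\bigl(\sigma B^2\bigr)'-\tfrac12\sigma'B^2+\sigma\int_\Omega\bigl(\mu|\nabla\dot u|^2+(\mu+\lambda)(\mathrm{div}\dot u)^2\bigr)dx+(\text{boundary terms}),
\end{equation*}
after integration by parts. The principal bulk terms of $\mathcal{R}\cdot\dot u$ are controlled by H\"older's inequality, the uniform density bound \eqref{S111}, and Proposition \ref{L43} (applied with $p=4$) in the form $\|\nabla u\|_{L^4}\le C(1+A+B)$; typical contributions look like $\int P\,\mathrm{div}u\,\mathrm{div}\dot u\,dx$ and $\int\rho|u||\nabla u||\nabla\dot u|\,dx$, which, after using Young's inequality, are absorbed as $\varepsilon\sigma\|\nabla\dot u\|_{L^2}^2+C(1+A^2+B^2)$.

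The main obstacle is the boundary term arising from the viscous integration by parts, namely contributions of the form $\int_{\partial\Omega}\dot u\cdot(G\,n+\mu\omega\,n^\bot)\,dS$ together with a similar term coming from $\dot u_j\partial_j((\mu+\lambda)\mathrm{div}\dot u)$. These I would handle exactly as in the proof of Proposition \ref{L44}: the Navier-slip condition \eqref{S105} combined with the pointwise identities \eqref{CC358} gives $\langle\dot u,n\rangle=-\langle u,\nabla_u n\rangle$ and $\langle\dot u,n^\bot\rangle=\partial_t\hat u+\nabla_{n^\bot}(\hat u^2/2)$, so that the boundary integrals reduce to either controlled tangential derivatives or integrands of the form $u\cdot(\nabla v\cdot w)$ on $\partial\Omega$, which are bounded via the Cai--Li identity \eqref{CCC401}. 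One extracts an extra time derivative $-(\int_{\partial\Omega}K\hat u^2/2\,dS)'$ absorbed into the energy, plus a remainder dominated by $C(1+A)\|\nabla u\|_{L^4}^2+\varepsilon\sigma\|\nabla\dot u\|_{L^2}^2$.

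After absorbing all $\varepsilon$-terms into the good $\sigma\|\nabla\dot u\|_{L^2}^2$ piece, one arrives at a differential inequality of the shape
\begin{equation*}
(\sigma B^2)'+\sigma\|\nabla\dot u\|_{L^2}^2\le C\bigl(B^2+A^2(1+B^2)+\sigma'B^2\bigr).
\end{equation*}
The coefficient $A^2(1+B^2)$ is harmless thanks to Proposition \ref{L44}, which gives $\int_0^T B^2/(1+A^2)\,dt\le CR_T^{1+\varepsilon}\le C$ after invoking the now established uniform bound \eqref{S111}; combined with $\sigma'\le1$ and the integrability of $A^2$ supplied by \eqref{CC344}, Gronwall's inequality then yields \eqref{S505}. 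The expected difficulty is therefore bookkeeping: ensuring that every boundary term fits into the Cai--Li framework and that none of the pressure/viscosity commutators produce an $A$-free term that Gronwall cannot absorb.
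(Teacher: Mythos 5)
Your proposal is correct and follows essentially the same route as the paper: apply the material-derivative operator to \eqref{S101}$_2$, test against $\dot u$ with the weight $\sigma$, control $\|\nabla u\|_{L^4}$ by taking $p=4$ in Proposition \ref{L43} together with \eqref{S111}, treat the boundary terms through the identities \eqref{CC358} and \eqref{CCC401}, and close by Gronwall using the time-integrability of $A^2$ and $B^2$. The only cosmetic differences are that the paper obtains $\int_0^T B^2\,dt\leq C$ from the decay estimate \eqref{CC429} while you obtain it from Proposition \ref{L44} combined with \eqref{S111} (both are legitimate), and the boundary time-derivative actually extracted at the $\dot u$ level is $\frac{d}{dt}\int_{\partial\Omega}G\,(u\cdot\nabla n\cdot u)\,dS$ as in \eqref{CC432}, bounded via \eqref{CC428}, rather than the term $(\int_{\partial\Omega}K\hat u^2/2\,dS)'$, which belongs to the lower-order estimate \eqref{CC363}.
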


\begin{proof}
We first give a Poincar\'{e} type estimate under the condition $u\cdot n|_{\partial\Omega}=0$, whose proof can be found in \cite{caili01}.
For any $1\leq p<\infty$, there exists a positive constant $C$ depending on $p$ and $\Omega$ such that
\begin{equation}\label{S501}
\begin{split}
\|\dot{u}\|_{L^p}&\leq C(\|\nabla\dot{u}\|_{L^2}+\| \nabla u\|^2_{L^2}),\\
\|\nabla\dot{u}\|_{L^2}&\leq C(\|\mathrm{div}\dot{u}\|_{L^2} + \|\mathrm{curl}\dot{u}\|_{L^2}+
\|\nabla u\|_{L^4}^2).
\end{split}
\end{equation}
In particular, we set $p=4$ in Proposition \ref{L43} which together with \eqref{S111} gives
\begin{equation}\label{S514}
\|\nabla u\|_{L^4}\leq C\bigg(A+B\bigg).
\end{equation}

Operating $\dot{u}_j[\partial/\partial t + \mathrm{div}(u\cdot)]$ to $\eqref{S101}^j_2$,
summing with respect to $j$,  and  integrating the resulting equation over $\Omega$, we derive that
\begin{equation}\label{S507}
\begin{split}
\big(B^2\big)'
=&\tilde J_1+\tilde J_2+\tilde J_3,
\end{split}
\end{equation}
where the right side of \eqref{S507} is given by
\begin{equation*}
\begin{split}
\tilde{J}_1&=\int_\Omega\bigg(\dot{u}\cdot\nabla G_t +\dot{u}_j\cdot\mathrm{div}\big(\partial_jG u\big)\bigg)dx,\\
\tilde{J}_2&=\mu\int_\Omega\bigg(\dot{u}\cdot\nabla^\bot\omega_t +\dot{u}_j\partial_k\big((\nabla^\bot\omega)_ju_k\big)\bigg)dx\\
\tilde{J}_3&=\int_\Omega\bigg(
\dot{u}\cdot\partial_t\big((\rho-\rho_s)\nabla f\big)
+\dot{u}_j\cdot\mathrm{div}\big(u(\rho-\rho_s)\partial_j f\big)\bigg)dx.
\end{split}
\end{equation*}

The calculations involving $\tilde{J}_1$ and $\tilde{J}_2$ are quite routine, so we just sketch the process, while the complete details can be found in \cite[Lemma 15]{FLL} and \cite{caili01}. Let us deduce that  
\begin{equation}\label{CC432}
\begin{split}
\|\mathrm{div}\dot{u}\|_{L^2}^2+\tilde{J}_1&\leq \int_{\partial\Omega}\frac{d}{dt}G\cdot (\dot{u}\cdot n)dS+C\bigg(B^4+B^2+A^2\bigg)+\delta\|\nabla\dot{u}\|_{L^2}^2\\
&\leq\frac{d}{dt}\int_{\partial\Omega}CG(u\cdot\nabla n\cdot u)dS+C\bigg(B^4+B^2+A^2\bigg)+2\delta\|\nabla\dot{u}\|_{L^2}^2,
\end{split}
\end{equation}
where in the last line, we have applied \eqref{CC416}, \eqref{S501}, \eqref{S514}, and \eqref{CCC401} dealing the boundary term. We mention that \eqref{CC361} together with \eqref{S111} gives
\begin{equation}\label{CC428}
\int_{\partial\Omega}CG\bigg(u\cdot\nabla n\cdot u\bigg)\,dS\leq CA^2+ \frac{B^2}{4}.
\end{equation}

Similar arguments applied to $\tilde{J}_2$ also gives
\begin{equation}\label{CC433}
 \|\mathrm{rot}\dot{u}\|_{L^2}^2+\tilde{J}_2\leq C\bigg(B^4+B^2+A^2\bigg)+2\delta\|\nabla\dot{u}\|_{L^2}^2.
\end{equation}

To treat the new material $\tilde J_3$, we make use of \eqref{S101}$_1$, \eqref{S111}, and the boundary condition \eqref{S105} to argue that
\begin{equation}\label{S517}
\begin{split}
\tilde J_3
&=-\int_\Omega\mathrm{div}(\rho_s u)\cdot\dot{u}\cdot\nabla fdx+\int_\Omega(\rho-\rho_s)\dot{u}_j\cdot u_i\cdot\partial_{ij}f dx\\
&\leq C \bigg(\|\dot{u}\|_{L^4}+\|\nabla\dot{u}\|_{L^2}\bigg)\|u\|_{H^1}\\
&\leq \frac{1}{2}\|\nabla\dot{u}\|_{L^2}^2+C \|\nabla u\|_{L^2}^2,
\end{split}
\end{equation}
where the last line is due to \eqref{CC426} and  \eqref{S501}.

Substitute \eqref{CC432}, \eqref{CC433}, and \eqref{S517} into \eqref{S507} and setting $\delta<(4C)^{-1}$, we apply \eqref{S501} to infer that
\begin{equation}\label{S518}
\begin{split}
&\big(B^2\big)'
\leq -\frac{d}{dt}\int_{\partial\Omega}CG\bigg(u\cdot\nabla n\cdot u\bigg)\,dS
+C\bigg(B^4+B^2+A^2\bigg).
\end{split}
\end{equation}

Multiplying \eqref{S518} by $\sigma$, one gets \eqref{S505} after using \eqref{CC416}, \eqref{CC429}, \eqref{CC428}, and Gronwall's inequality.
\end{proof}

Now we establish \eqref{S112} and finish the main Theorem \ref{T11}.

{\it{ Proof of \eqref{S112}.}}
According to \eqref{S111}, \eqref{S412}, \eqref{CC416}, and \eqref{S505}, for $1\leq T<\infty$, we argue that
\begin{equation}\label{CC430}
\sup_{1\leq t\leq T}\|\nabla u\|_{L^p}\leq C\bigg(1+A+B\bigg)\leq C.
\end{equation}
By interpolation arguments, \eqref{CC430} together with \eqref{CC416} ensures for any $1\leq p<\infty$, there is a constant $\xi$ depending on $p$, such that
$$\sup_{0\leq t\leq T}\bigg(e^{\xi t}\|\nabla u\|_{L^p}\bigg)\leq C,$$
which together with \eqref{CC416} implies \eqref{S112} and finishes the proof.
\thatsall

\section{Appendix I: Periodic domain}
\quad We prove Theorem \ref{T12} for periodic domain $\Omega=[0,1]\times[0,1]$ without the external force $\nabla f=0$. The outline is similar with the case of the Navier-slip boundary conditions, thus we only focus on the different parts.

Observe that in periodic domain, we only have \eqref{S202},
$$\|u\|_{L^p}\leq Cp^{1/2}\|u\|_{L^2}^{2/p}\|u\|_{H^1}^{1-2/p},$$
where $\|u\|_{H^1}$ can \textbf{not} be replaced by $\|\nabla u\|_{L^2}$. Consequently, we lack the global integrability on $\|u\|_{L^p}$ which turns out to be vital for uniform estimates.

Instead of it, since $\nabla f=0$, we integrate \eqref{S101}$_2$ over $\Omega\times[0,t]$ directly and get
\begin{equation}\label{C501}
\int\rho u\,dx=\int\rho_0u_0\,dx=C.
\end{equation}
Thus $\overline{\rho u}$ remains constant. Moreover,
we assume $\overline{\rho}=1$ and apply Poincar\'{e} inequality to deduce that
\begin{equation}\label{C502}
\begin{split}
\int|u-\overline{\rho u}|^p\,dx&\leq C\int|u-\overline{u}|^p\,dx+C\,\big|\overline{\rho u}-\overline{u}\big|^p\\
&=C\int|u-\overline{u}|^p\,dx+C\,\bigg|\int(\rho-\overline{\rho})\cdot(u-\overline{u})\,dx\bigg|^p\\
&\leq C\,\big(\|\rho\|_{L^\gamma}^p+1\big)\cdot\big(\int|\nabla u|^2\,dx\big)^{2/p},
\end{split}
\end{equation}
where the second line is due to $\int(\rho-\overline{\rho})\cdot\overline{u}\,dx=0.$ We comment that \eqref{C501} and \eqref{C502} play key roles in deriving \eqref{S111} and \eqref{S112} for periodic domain.

The rest of Appendix I is organized as follows: we first establish crucial Corollary \ref{L33} for periodic domain, then we extend Proposition \ref{L35}--\ref{L44} as well, finally we turn to the proof of Theorem \ref{T12}.

\subsection{Key assertions for periodic domain}
\quad In the same notations as Section \ref{SEC3}, we first modify calculations in Proposition \ref{P31}.
Note that $\Delta^{-1}$ is well defined on $\Omega$ for functions with zero average, thus by taking $\Delta^{-1}\mathrm{div}$ on \eqref{S101}$_2$, we deduce that
$$F-\bar{F}=\frac{d}{dt}F_1-\big[u_i,R_{ij}\big]\,(\rho u_j),$$
where $F\triangleq(2\mu+\lambda)\mathrm{div}u-P$ and $F_1\triangleq\Delta^{-1}\mathrm{div}(\rho u)$, while the commutator is given by
\begin{equation*}
\begin{split}
\big[u_i,R_{ij}\big]\,(\rho u_j)
&\triangleq u\cdot\nabla\Delta^{-1}\mathrm{div}(\rho u)-\Delta^{-1}\mathrm{div}\,\mathrm{div}(\rho u\otimes u).
\end{split}
\end{equation*}
Consequently, \eqref{S314} is transformed into
\begin{equation}\label{C503}
\frac{d}{dt}(\theta(\rho)+F_1)+P=\big[u_i,R_{ij}\big]\,(\rho u_j)-\bar{F}.
\end{equation}
Let us still set $\theta(\rho)\triangleq 2\mu\log\rho+\rho^\beta/\beta$ and $\Phi\triangleq (\theta(\rho)+F_1-M)_+$.

Multiplying \eqref{C503} by $\rho\Phi^{\alpha-1}$ and integrating over $\Omega$, the same calculations shown in Lemma \ref{L32} lead to
\begin{equation}\label{C504}
\frac{d}{dt}\int_\Omega\rho\Phi^\alpha dx+\int_\Omega\rho(\Phi+M)^{\frac{\gamma}{\beta}}\Phi^{\alpha-1}dx
\leq C\int_\Omega\rho \tilde{H}\cdot \Phi^{\alpha-1}dx,
\end{equation}
while this time, $\tilde{H}$ is simply given by
$$\tilde{H}\triangleq |F_1|^{\frac{\gamma}{\beta}}+\big|[u_i,R_{ij}]\,(\rho u_j)\big|+|\bar{F}|.$$

Now we declare that the right side of \eqref{C504} can be handled by
\begin{proposition}
For the periodic domain $\Omega$, under the setting of Lemma \ref{L32}, for $\alpha\geq 3$ and $\varepsilon>0$, there exist constants $M$ and $C$ depending on $\varepsilon$, $\alpha,$ $\beta,$ $\gamma,$ and $\mu$, such that
\begin{equation}\label{C505}
\int_\Omega\rho \tilde{H}\cdot \Phi^{\alpha-1}dx\leq
CA_1^2 \left(\int_\Omega\rho\Phi^\alpha dx+1\right)+\varepsilon\int_\Omega\rho(\Phi+M)^{\frac{\gamma}{\beta}}\Phi^{\alpha-1}dx.
\end{equation}
\end{proposition}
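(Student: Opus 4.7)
The plan is to mirror the six-term analysis of Proposition \ref{P31}, noting that two of those six terms vanish in the periodic setting: the boundary term $F_4$ is absent because $\partial\Omega=\emptyset$, and the external-force term $F_3$ vanishes by hypothesis. Accordingly, decompose
\begin{equation*}
\int_\Omega \rho\,\tilde H\,\Phi^{\alpha-1}\,dx = \tilde I_1+\tilde I_2+\tilde I_3,
\end{equation*}
where the three pieces handle $|F_1|^{\gamma/\beta}$, the commutator $[u_i,R_{ij}](\rho u_j)$, and $|\bar F|$ respectively. The only genuinely new ingredient is the commutator.

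The first preparatory step is to re-establish the absorption estimate \eqref{CC306} with $\Phi=(\theta(\rho)+F_1-M)_+$ in place of the Navier-slip version. Since $\nabla F_1$ is a Riesz transform of $\rho u$, Calder\'{o}n-Zygmund yields $\|\nabla F_1\|_{L^q}\leq C\|\rho u\|_{L^q}$; after normalising $F_1$ to have zero mean, the 2D Sobolev embedding together with the energy bound $\|\sqrt\rho\,u\|_{L^2}\leq C$ gives $\|F_1\|_{L^p}\leq C\|\rho\|_{L^{p/2}}^{1/2}$, the same inequality that powered \eqref{CC306} in the main text. Step 2 then bounds $\tilde I_1$ by mimicking the argument for $I_1$ in Proposition \ref{P31} in both subcases $\gamma\leq 2\beta$ and $\gamma>2\beta$; the only modification is that the Navier-slip inequality $\|u\|_{L^s}\leq C\|\nabla u\|_{L^2}$ is unavailable on the torus, and I replace it by $\|u\|_{L^s}\leq\|u-\overline{\rho u}\|_{L^s}+|\overline{\rho u}|\leq C\|\nabla u\|_{L^2}+C$, using \eqref{C502} combined with the bound $|\overline{\rho u}|\leq\|\rho_0u_0\|_{L^1}$ supplied by the conservation identity \eqref{C501}. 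The additional additive constant is harmlessly absorbed into the $+1$ on the right-hand side of the target.

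The main obstacle is Step 3, the commutator $\tilde I_2$. Expand
\begin{equation*}
[u_i,R_{ij}](\rho u_j) = u\cdot\nabla F_1 - F_2, \qquad F_2 \triangleq \Delta^{-1}\mathrm{div}\,\mathrm{div}(\rho u\otimes u),
\end{equation*}
which recovers precisely the two terms handled as $I_2$ and $I_3$ in Proposition \ref{P31}. Calder\'{o}n-Zygmund supplies $\|\nabla F_1\|_{L^{(\alpha\beta+1)/\beta}}\leq C\|\rho u\|_{L^{(\alpha\beta+1)/\beta}}$ and $\|F_2\|_{L^{(\alpha\beta+1)/\beta}}\leq C\|\rho u\otimes u\|_{L^{(\alpha\beta+1)/\beta}}$, and H\"{o}lder factors each product into $\|\rho\|_{L^{\alpha\beta+1}}\cdot\|u\|_{L^s}^2$ with $s$ chosen so that the assumption $\beta>4/3$ renders every exponent admissible. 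Applying the same $u-\overline{\rho u}$ substitute for Poincar\'{e}-Sobolev as in Step 2, I obtain $\|[u_i,R_{ij}](\rho u_j)\|_{L^{(\alpha\beta+1)/\beta}}\leq CA_1^2\,\|\rho\|_{L^{\alpha\beta+1}}+C$; combining with H\"{o}lder on the outer integral and the $L^{\alpha\beta+1}$-control of $\rho$ from Step 1 gives $\tilde I_2\leq CA_1^2(\int\rho\Phi^\alpha\,dx+1)$.

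Step 4 controls $\tilde I_3$ via $|\bar F|\leq CA_1^2\bigl(\int\rho\Phi^\alpha\,dx+1\bigr)^{\beta/(\alpha\beta+1)}+C$, a purely interior estimate that carries over verbatim from the treatment of $I_6$ in Proposition \ref{P31}. Collecting Steps 2--4 and choosing $M$ large enough to absorb the accumulated constants into the damping term $\varepsilon\int\rho(\Phi+M)^{\gamma/\beta}\Phi^{\alpha-1}\,dx$ then yields \eqref{C505}. The crux remains Step 3: the conservation of momentum \eqref{C501} is what keeps $|\overline{\rho u}|$ uniformly bounded and thereby rescues the Poincar\'{e}-Sobolev inequality, which is exactly why the hypothesis $\nabla f\equiv 0$ is indispensable---otherwise $\overline{\rho u}$ picks up the unbounded drift \eqref{C103} and the whole scheme collapses.
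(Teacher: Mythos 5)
Your overall architecture (three terms, reuse of \eqref{CC306}, the $\bar F$ term verbatim) matches the paper, but the way you handle the constant momentum $\overline{\rho u}$ contains a genuine gap, and it is precisely the point where the periodic case differs from the slip case. You replace $\|u\|_{L^s}\leq C\|\nabla u\|_{L^2}$ by $\|u\|_{L^s}\leq C\|\nabla u\|_{L^2}+C$ and claim the additive constant is ``harmlessly absorbed into the $+1$''. It cannot be: the $+1$ on the right of \eqref{C505} is multiplied by $A_1^2$, which is only square-integrable in time and can be arbitrarily small, whereas the terms generated by the constant part of $u$ carry \emph{no} factor of $\|\nabla u\|_{L^2}$ and are not constants — they come multiplied by unbounded density norms. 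Concretely, in your Step 2 the piece $\overline{\rho u}\cdot\Delta^{-1}\nabla\rho$ of $F_1$ is of size $\|\rho\|_{L^q}$, and your crude bound produces a term of order $\int_\Omega\rho\Phi^\alpha dx$ with an $A_1$-free constant coefficient; this destroys the Gronwall structure of Lemma \ref{L32}, and when $\gamma<\beta$ the damping term $\varepsilon\int\rho(\Phi+M)^{\gamma/\beta}\Phi^{\alpha-1}dx$ does not dominate $\int\rho\Phi^\alpha dx$ either. The paper deals with this by isolating exactly this piece and absorbing it into the damping term through \eqref{CC306} with a carefully tuned exponent $p=\beta(\alpha-1+\gamma/\beta^2)$ and $M>(C/\varepsilon)^{\beta^2/(\gamma\beta-\gamma)}$ (this is where $\gamma>1$ enters); none of that bookkeeping appears in your argument.

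The same issue invalidates your Step 3. Splitting $[u_i,R_{ij}](\rho u_j)=u\cdot\nabla F_1-F_2$ and estimating the two pieces separately discards the essential cancellation: the commutator vanishes when $u$ is constant, so the dangerous contributions $\overline{\rho u}\cdot\nabla\Delta^{-1}\mathrm{div}(\rho\,\overline{\rho u})$ and $\Delta^{-1}\mathrm{div}\,\mathrm{div}(\rho\,\overline{\rho u}\otimes\overline{\rho u})$ cancel inside the commutator but are present (with size $\sim|\overline{\rho u}|^2\|\rho\|_{L^q}$ and no $\|\nabla u\|_{L^2}$ factor) in each piece separately, so your claimed bound $\|[u_i,R_{ij}](\rho u_j)\|_{L^{(\alpha\beta+1)/\beta}}\leq CA_1^2\|\rho\|_{L^{\alpha\beta+1}}+C$ does not follow from your estimates. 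The paper instead keeps the commutator structure and invokes the commutator estimate \eqref{C509} from \cite[Lemma 2.5]{HL}, $\|[u_i,R_{ij}](f_j)\|_{L^p}\leq C\|\nabla u\|_{L^2}\|f\|_{L^p}$, which supplies the $\|\nabla u\|_{L^2}$ factor for free even for the constant part of the argument; it then splits only $\rho u=\rho(u-\overline{\rho u})+\rho\,\overline{\rho u}$, uses \eqref{C502} for the first piece, and absorbs the residual of the second into the damping with $M>(C/\varepsilon)^{\beta/(\gamma-1)}$. To repair your proof you would have to either keep the commutator intact and use that estimate, or redo the absorption-into-damping argument for every $A_1$-free, $\rho$-weighted term you create; as written, the ``absorbed into the $+1$'' step is false.
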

\begin{proof}
Note that
\begin{equation*}
\begin{split}
&\int_\Omega\rho\tilde{H}\cdot \Phi^{\alpha-1} dx
=\int_\Omega\rho\bigg(|F_1|^{\frac{\gamma}{\beta}}+\big|[u_i,R_{ij}]\,(\rho u_j)\big|+|\bar{F}|\bigg)\Phi^{\alpha-1} dx\triangleq \sum_{k=1}^3J_k.
\end{split}
\end{equation*}
We set $\alpha\geq 3$ and check each terms in details.

\textit{Estimates of $J_1$.}
Such term can be split into two parts by
\begin{equation}\label{C506}
\begin{split}
J_1=\int_\Omega\rho\big|\Delta^{-1}\mathrm{div}\big(\rho(u-\overline{\rho u})\big)\big|\Phi^{\alpha-1}dx+\int_\Omega\rho\big|\Delta^{-1}\mathrm{div}(\rho\,\overline{\rho u})\big|\Phi^{\alpha-1}dx.
\end{split}
\end{equation}
In view of \eqref{C502} and \eqref{CC315}--\eqref{CC313}, we argue that
\begin{equation}\label{C507}
\mbox{The first term of \eqref{C506}}\leq CA_1^2 \left(\int_\Omega\rho\Phi^\alpha dx+1\right)+\varepsilon\int_\Omega\rho(\Phi+M)^{\frac{\gamma}{\beta}}\Phi^{\alpha-1}dx.
\end{equation}

Next, let us turn to the second term of \eqref{C506}. Taking $p=\beta(\alpha-1+\gamma/\beta^2)>2$, by means of \eqref{CC306}, we have
\begin{equation}\label{C508}
\begin{split}
\int_\Omega\rho\big|\Delta^{-1}\mathrm{div}(\rho\,\overline{\rho u})\big|\Phi^{\alpha-1}dx
&\leq \overline{\rho u}\cdot\|\Delta^{-1}\nabla\rho\|_{L^\infty}^{\frac{\gamma}{\beta}}\int\rho\Phi^{\alpha-1}dx\\
&\leq C\,\bigg(\int_\Omega\rho^{p\beta+1}dx\bigg)^{\frac{\gamma}{p\beta^2}}\cdot\int\rho\Phi^{\alpha-1}dx\\
&\leq C\,\bigg(\int_\Omega\rho\Phi^{p}dx+M^p\bigg)^{\frac{\gamma}{p\beta^2}}\int\rho\Phi^{\alpha-1}dx\\
&\leq \varepsilon\int\rho(\Phi+M)^{\frac{\gamma}{\beta}}\Phi^{\alpha-1}dx,
\end{split}
\end{equation}
provided $M>(C/\varepsilon)^{{\beta^2}/{(\gamma\beta-\gamma)}}$. Combining \eqref{C507} and \eqref{C508}, we argue that
$$J_1\leq CA_1^2 \left(\int_\Omega\rho\Phi^\alpha dx+1\right)+\varepsilon\int_\Omega\rho(\Phi+M)^{\frac{\gamma}{\beta}}\Phi^{\alpha-1}dx.$$

\textit{Estimates of $J_2$.} According to \cite[Lemma 2.5]{HL}, it holds that
\begin{equation}\label{C509}
\big\|[u_i,R_{ij}](f_j)\big\|_{L^p}\leq C(p)\,\|\nabla u\|_{L^2}\|f\|_{L^p}.
\end{equation}

Let us rewrite $J_2$ as
\begin{equation}\label{C510}
\begin{split}
J_2=\int_\Omega\rho\big|[u_i,R_{ij}](\rho\,\overline{\rho u})\big|\Phi^{\alpha-1}dx+
\int_\Omega\rho\big|[u_i,R_{ij}]\big(\rho(u-\overline{\rho u})\big)\big|\Phi^{\alpha-1}dx.
\end{split}
\end{equation}

Setting $q=\beta(\alpha-1+1/\beta)>2$, the first term of \eqref{C510} is bounded by
\begin{equation*}
\begin{split}
&\int_\Omega\rho\big|[u_i,R_{ij}](\rho\,\overline{\rho u})\big|\Phi^{\alpha-1}dx\\
&\leq \bigg(\int_\Omega\rho|[u_i,R_{ij}](\rho\,\overline{\rho u})\big|^qdx\bigg)^{1/q}\bigg(\int_\Omega\rho\Phi^{\alpha-1+1/\beta}dx\bigg)^{1-1/q}\\
&\leq\bigg(\int_\Omega\rho^{q+1}dx\bigg)^{1/(q^2+q)}\bigg(\int_\Omega[u_i,R_{ij}](\rho\,\overline{\rho u})\big|^{q+1}dx\bigg)^{1/(q+1)} \bigg(\int_\Omega\rho\Phi^{\alpha-1+1/\beta}dx\bigg)^{1-1/q}\\
&\leq C\|\nabla u\|_{L^2}\bigg(\int_\Omega\rho^{q+1}dx\bigg)^{1/q}\bigg(\int_\Omega\rho\Phi^{\alpha-1+1/\beta}dx\bigg)^{1-1/q},
\end{split}
\end{equation*}
where the last line is due to \eqref{C509}. By virtue of \eqref{CC306}, we declare that
\begin{equation}\label{C511}
\begin{split}
&\int_\Omega\rho\big|[u_i,R_{ij}](\rho\,\overline{\rho u})\big|\Phi^{\alpha-1}dx\\
&\leq C\|\nabla u\|_{L^2}\bigg(\int\rho\Phi^{\alpha-1+1/\beta}dx+C\bigg)^{1/q}\bigg(\int_\Omega\rho\Phi^{\alpha-1+1/\beta}dx\bigg)^{1-1/q}\\
&\leq C\|\nabla u\|_{L^2}\bigg(\int\rho\Phi^{\alpha-1+1/\beta}dx+C\bigg)^{1/2}\bigg(\int_\Omega\rho\Phi^{\alpha-1+1/\beta}dx\bigg)^{1/2}\\
&\leq C\|\nabla u\|_{L^2}^2\bigg(\int_\Omega\rho\Phi^{\alpha}dx+1\bigg)+\varepsilon\int_\Omega\rho(\Phi+M)^{\frac{\gamma}{\beta}}\Phi^{\alpha-1}dx,
\end{split}
\end{equation}
provided $M>(C/\varepsilon)^{{\beta}/(\gamma-1)}$.

Similarly, the second term of \eqref{C510} is controlled by
\begin{equation}\label{C512}
\begin{split}
&\int_\Omega\rho\big|[u_i,R_{ij}]\big(\rho(u-\overline{\rho u})\big)\big|\Phi^{\alpha-1}dx\\
&\leq\bigg(\int_\Omega\rho|[u_i,R_{ij}](\rho(u-\overline{\rho u}))\big|^qdx\bigg)^{1/q}\bigg(\int_\Omega\rho\Phi^{\alpha-1+1/\beta}dx\bigg)^{1-1/q}\\
&\leq C\bigg(\int_\Omega\rho^{q+1}dx\bigg)^{1/(q^2+q)}\bigg(\|\nabla u\|_{L^2}\|\rho(u-\overline{\rho u})\|_{L^{q+1}}\bigg) \bigg(\int_\Omega\rho\Phi^{\alpha-1+1/\beta}dx\bigg)^{1-1/q}\\
&\leq C\,\|\nabla u\|_{L^2}^2\bigg(\int_\Omega\rho\Phi^{\alpha}dx+1\bigg),
\end{split}
\end{equation}
where the last line is due to \eqref{C502} and the fact $p/\beta=\alpha-1+1/\beta<\alpha$.
Combining \eqref{C511} and \eqref{C512}, we arrive at
$$J_2\leq CA_1^2 \left(\int_\Omega\rho\Phi^\alpha dx+1\right)+\varepsilon\int_\Omega\rho(\Phi+M)^{\frac{\gamma}{\beta}}\Phi^{\alpha-1}dx.$$

\textit{Estimates of $J_3$}. According to \eqref{S326}, we directly deduce that
$$J_3\leq CA_1^2 \left(\int_\Omega\rho\Phi^\alpha dx+1\right)+\varepsilon\int_\Omega\rho(\Phi+M)^{\frac{\gamma}{\beta}}\Phi^{\alpha-1}dx.$$

Collecting all estimates on $J_i$, we argue that
$$\int_\Omega\rho \tilde{H}\cdot \Phi^{\alpha-1}dx\leq
CA_1^2 \left(\int_\Omega\rho\Phi^\alpha dx+1\right)+\varepsilon\int_\Omega\rho(\Phi+M)^{\frac{\gamma}{\beta}}\Phi^{\alpha-1}dx,$$
which is \eqref{C505}. The proof is therefore completed.
\end{proof}

Substituting \eqref{C505} into \eqref{C504}, we apply the same method in Corollary \ref{L33} to declare
\begin{proposition}\label{P52}
For any $3\leq\alpha<\infty$, there are constants $C$ and $M_1$ depending only on $\mu$, $\alpha$, $\beta$, $\gamma$, $\rho_0$, $u_0$, and $\Omega$, such that
\begin{equation}\label{C513}
\sup_{0\leq t\leq T}\|\rho\|_{L^{\alpha\beta+1}}+\int_0^T\int_\Omega(\rho-M_1)_+^{\alpha-1} dxdt\leq C.
\end{equation}
\end{proposition}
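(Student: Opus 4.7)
The plan is to mirror the proof of Corollary \ref{L33} almost verbatim, with the periodic quantity $F_1 \triangleq \Delta^{-1}\mathrm{div}(\rho u)$ introduced in \eqref{C503} playing the role of $\tilde F_1$. First, substituting the damping-absorption bound \eqref{C505} into the differential inequality \eqref{C504} gives
\begin{equation*}
\frac{d}{dt}\int_\Omega \rho\Phi^\alpha\, dx + \int_\Omega \rho(\Phi+M)^{\gamma/\beta}\Phi^{\alpha-1}\, dx \leq CA_1^2\left(\int_\Omega \rho\Phi^\alpha\, dx + 1\right).
\end{equation*}
Since $\int_0^T A_1^2(t)\, dt \leq C$ by Proposition \ref{L31} (which is valid in the periodic case with $\nabla f\equiv 0$), Gronwall's inequality yields $\sup_{0\leq t\leq T}\int_\Omega \rho\Phi^\alpha\, dx \leq C$. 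The universal estimate \eqref{CC306} (whose proof is purely local and thus transfers unchanged to the torus) then upgrades this to $\sup_{0\leq t\leq T}\|\rho\|_{L^{\alpha\beta+1}} \leq C$, giving the first half of \eqref{C513}.

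Reinserting this uniform density bound into the displayed differential inequality and integrating in $t$ produces
\begin{equation*}
\int_0^T\int_\Omega \rho\Phi^{\alpha-1}\, dx\, dt \leq C.
\end{equation*}
Since $\Phi = (\theta(\rho) + F_1 - M)_+$, converting this into a bound for $\int_0^T\int_\Omega \rho(\theta(\rho) - M)_+^{\alpha-1}\, dx\, dt$ requires separately controlling $\int_0^T\int_\Omega \rho|F_1|^{\alpha-1}\, dx\, dt$. For this I would use the Calder\'on-Zygmund bound $\|\nabla F_1\|_{L^q} \leq C\|\rho u\|_{L^q}$ (since $\mathrm{div}(\rho u)$ has mean zero by conservation of mass, so $F_1 = \Delta^{-1}\mathrm{div}(\rho u)$ is well defined) together with Sobolev embedding to obtain $\|F_1\|_{L^p} \leq C\|\rho\|_{L^{p/2}}^{1/2}$ via the factorisation $\rho u = \sqrt\rho\cdot\sqrt\rho u$ and the energy bound on $\|\sqrt\rho u\|_{L^2}$; this is the periodic analogue of \eqref{CC307}. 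A logarithmic-Sobolev argument analogous to \eqref{CC315} then gives $\|F_1\|_{L^\infty} \leq CA_1\cdot\|\rho\|_{L^{2+\varepsilon}}^{1/2} \leq CA_1$. Interpolating $\int_\Omega \rho |F_1|^{\alpha-1}\, dx \leq \|\rho\|_{L^2}\|F_1\|_{L^{2(\alpha-1)}}^{\alpha-3}\|F_1\|_{L^\infty}^2$ and integrating in $t$ bounds this contribution by $\int_0^T A_1^2\, dt \leq C$.

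Finally, exactly as in the closing paragraph of Corollary \ref{L33}, by choosing $M > 10\beta$ and setting $M_1 \triangleq (\beta M)^{1/\beta}$, the pointwise inequality $(\theta(\rho) - M)_+ \geq C(\rho - M_1)_+$ (valid because $\rho^\beta \geq \beta M$ forces $\rho \geq M_1 \geq 1$, on which the $2\mu\log\rho$ term is nonnegative) translates the combined estimate into
\begin{equation*}
\int_0^T\int_\Omega (\rho - M_1)_+^{\alpha-1}\, dx\, dt \leq C,
\end{equation*}
completing \eqref{C513}. The main obstacle I anticipate is the $L^p$ control of $F_1$ on the torus: because Poincar\'e's inequality does not close $\|u\|_{L^p}$ directly against $\|\nabla u\|_{L^2}$ in the periodic setting, one must exploit the periodic Riesz-transform boundedness together with the fact that $\sqrt\rho u$ (rather than $u$ itself) is controlled in $L^2$ by the basic energy identity, or, alternatively, work modulo $\overline{\rho u}$ using the modified Poincar\'e inequality \eqref{C502}. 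Everything else is a direct transcription of the Navier-slip argument.
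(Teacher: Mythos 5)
Your overall route is exactly the paper's: Proposition \ref{P52} is proved there in one line by substituting \eqref{C505} into \eqref{C504} and repeating the argument of Corollary \ref{L33}, and your first half (Gronwall with $\int_0^TA_1^2\,dt\le C$, then \eqref{CC306}, then integration in time to get $\int_0^T\int_\Omega\rho\Phi^{\alpha-1}dxdt\le C$) is a faithful transcription of that.

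The gap is in the step converting $\Phi=(\theta(\rho)+F_1-M)_+$ into $(\theta(\rho)-M)_+$, namely the claim $\|F_1\|_{L^\infty}\le CA_1$ and hence $\int_\Omega\rho|F_1|^{\alpha-1}dx\le CA_1^2$. On the torus this is false for the full $F_1=\Delta^{-1}\mathrm{div}(\rho u)$: writing $\rho u=\rho(u-\overline{\rho u})+\rho\,\overline{\rho u}$, the second contribution equals $\overline{\rho u}\cdot\Delta^{-1}\nabla\rho$, which (once the first half of the proposition is known) is bounded uniformly in time by a constant but is not $O(A_1)$ --- at a state with $\overline{\rho u}\neq0$, $\rho$ non-constant and $\nabla u\approx0$ one has $A_1\approx0$ while this term is of order one. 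Neither of your suggested remedies closes this: the $\sqrt\rho\,u\in L^2$ route only yields $\|F_1\|_{L^\infty}\le C$, and ``working modulo $\overline{\rho u}$'' via \eqref{C502} controls only $F_1'\triangleq\Delta^{-1}\mathrm{div}\big(\rho(u-\overline{\rho u})\big)$ by $CA_1$, so the leftover piece would contribute roughly $\int_0^T\int_\Omega\rho\,dxdt\sim T$ and destroy the uniformity in $T$ of the second half of \eqref{C513}. The missing idea is that this leftover is pointwise bounded and should be absorbed into the truncation level, not compared with $A_1$: setting $C_0\triangleq\sup_{0\le t\le T}\|\overline{\rho u}\cdot\Delta^{-1}\nabla\rho\|_{L^\infty}\le C$, one has $(\theta(\rho)-M-C_0)_+\le\Phi+|F_1'|$, and choosing $M_1^\beta=\beta(M+C_0)$ instead of $\beta M$ still gives $(\rho-M_1)_+\le C(\theta(\rho)-M-C_0)_+$; then $\int_0^T\int_\Omega\rho|F_1'|^{\alpha-1}dxdt\le C\int_0^TA_1^2\,dt\le C$ by your interpolation scheme, using $\|F_1'\|_{L^{2(\alpha-1)}}\le C$ from the $\sqrt\rho$ factorisation and $\|F_1'\|_{L^\infty}\le CA_1$ from \eqref{C502} together with the uniform $L^p$ bound on $\rho$. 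With that correction your argument coincides with what the paper intends.
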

In view of Proposition \ref{L34}, by making use of Proposition \ref{P52}, we also have
\begin{proposition}\label{P53}
Under the hypothesis of Theorem \ref{T12}, suppose \eqref{CC101} is valid and $(\rho, u)$ is the   strong solution given by Lemma \ref{L21}, then for any $1\leq p<\infty$, we have
\begin{equation}\label{C514}
\lim_{t\rightarrow\infty}\|\rho-\bar{\rho}\|_{L^{p}}=0.
\end{equation}
\end{proposition}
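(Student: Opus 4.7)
The plan is to follow the scheme of Proposition \ref{L34} closely, making the appropriate adjustments for the periodic setting, where the steady state is the constant $\bar\rho$ and the external force is absent. By Proposition \ref{P52} and interpolation, it suffices to prove $\|\rho-\bar\rho\|_{L^2}\to 0$, and the result for general $p$ follows. Since $\nabla f\equiv 0$, conservation of momentum gives $\overline{\rho u}(t)\equiv \overline{\rho_0 u_0}$; after a Galilean change of frame we may assume $\overline{\rho u}=0$, so that the putative steady state $(\bar\rho,0)$ is consistent with the conserved momentum.

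For the first step, I would establish convergence on moving time windows: for any sequence $\tau_n\to\infty$, setting $\rho_n(x,t)=\rho(x,t+\tau_n)$, $u_n(x,t)=u(x,t+\tau_n)$, $P_n=\rho_n^{\gamma}$, show that along a subsequence $\rho_n\to\hat\rho$ strongly in $L^p(\Omega\times(0,1))$ for every $p<\infty$, with $\hat\rho=\bar\rho$. The bound \eqref{C513} yields weak subsequential limits for $\rho_n$, $P_n$, and $P_n^2$. The key input is that $\int_0^1\|\rho_n u_n\|_{L^p}^2\,dt\to 0$: using $\overline{\rho u}=0$, I would write $\rho_n u_n=\rho_n(u_n-\overline{\rho u})$, then combine \eqref{C513} with the periodic Poincar\'e-type inequality \eqref{C502} and the global bound $\int_0^\infty\|\nabla u\|_{L^2}^2\,dt\le C$ (Proposition \ref{L31}) together with absolute continuity of the integral. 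Applying the div-curl lemma to $Q_n=(P_n,0,0)$, whose $t$-divergence is controlled by the continuity equation and whose spatial curl comes from the momentum equation (now without the $\rho\nabla f$ term), yields $\hat P\cdot\hat P=\widehat{P^2}$, and the strict convexity of $\rho\mapsto\rho^\gamma$ upgrades the weak convergence of $\rho_n$ to strong convergence in $L^p(\Omega\times(0,1))$. Passing to the limit in \eqref{S101}$_1$ gives $\partial_t\hat\rho=0$, and in \eqref{S101}$_2$ gives $\nabla\hat\rho^\gamma=0$; combined with conservation of mass this forces $\hat\rho=\bar\rho$, and uniqueness of the subsequential limit gives
\begin{equation*}
\lim_{\tau\to\infty}\|\rho-\bar\rho\|_{L^p(\Omega\times(\tau,\tau+1))}=0.
\end{equation*}

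For the second step, I would upgrade interval convergence to pointwise-in-$t$ convergence in $L^2$ by repeating the integration-in-time trick of Step 2 in Proposition \ref{L34}. Integrating the continuity equation $\rho_t+\mathrm{div}(\rho u)=0$ against a test function $\phi$, then in the variable $s\in[0,1]$, and using Fubini gives a representation of $\int_\Omega\rho\phi\,dx(t)$ as the average over $[t,t+1]$ plus an error controlled by $\int_t^{t+1}\|\rho u\|_{L^1}^2\,d\tau$, which tends to zero by the previous step. This yields $\rho\rightharpoonup\bar\rho$ in $L^2(\Omega)$ as $t\to\infty$. An analogous computation applied to $\partial_t\rho^2+\mathrm{div}(\rho^2u)=-\rho^2\mathrm{div}u$ gives $\|\rho\|_{L^2}^2\to\|\bar\rho\|_{L^2}^2$, and these two facts together force strong $L^2$ convergence, completing the proof.

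The main obstacle, compared with the Navier-slip case, is producing the smallness of $\int_0^1\|\rho_n u_n\|_{L^p}^2\,dt$ without the inequality $\|u\|_{L^p}\le C\|\nabla u\|_{L^2}$. This is where the periodic structure enters decisively: one must invoke \eqref{C501} and \eqref{C502} to absorb the constant $\overline{\rho u}$ (after the Galilean reduction) and exploit $\overline\rho=1$ so that $\|u-\overline{\rho u}\|_{L^p}$ is controlled by $\|\nabla u\|_{L^2}$ with a factor polynomial in $\|\rho\|_{L^\gamma}$. Once this single estimate is in hand, the remainder of the argument runs in parallel with Proposition \ref{L34}.
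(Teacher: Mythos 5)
Your proposal is correct and follows essentially the same route as the paper, which proves Proposition \ref{P53} only by remarking that the scheme of Proposition \ref{L34} goes through once the periodic uniform $L^p$ bounds of Proposition \ref{P52} are available, with the missing inequality $\|u\|_{L^p}\leq C\|\nabla u\|_{L^2}$ compensated exactly as you say by the conserved momentum \eqref{C501} and the Poincar\'e-type estimate \eqref{C502}. Your additional Galilean change of frame reducing to $\overline{\rho u}=0$ is a legitimate way to make that compensation explicit (the system, the torus $L^p$ norms, and the a priori estimates are all invariant under it); the paper instead leaves the constant momentum implicit and works directly with $u-\overline{\rho u}$, which would otherwise require carrying the constant drift through the limiting continuity and momentum equations before concluding $\hat\rho=\bar\rho$.
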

In light of \eqref{C514}, for some $\delta>0$ small enough, we may assume
\begin{equation}\label{C515}
\|\rho-\bar{\rho}\|_{L^4}\leq\delta,
\end{equation}
in the rest of this appendix. With the help of \eqref{C513} and \eqref{C515}, we can obtain crucial $L^2$ global integrability of $\|\rho-\bar{\rho}\|_{L^2}$.

\begin{proposition}\label{P54}
In the same setting as Proposition \ref{P53}, suppose that \eqref{C515} is valid, then there is a constant $C$ determined by $\mu$, $\beta$, $\gamma$, $\rho_0$, $u_0$, and $\Omega$, such that
\begin{equation}\label{C516}
\int_0^T\int_\Omega(\rho+1)^{\gamma-1}(\rho-\bar{\rho})^2\,dxdt\leq C.
\end{equation}
\end{proposition}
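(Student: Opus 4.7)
The plan is to mimic Proposition \ref{L35}, replacing the Bogovskii operator $\mathcal{B}(\rho-\rho_s)/\rho_s$ by the periodic inverse-divergence operator $v\triangleq\nabla\Delta^{-1}(\rho-\bar\rho)$, which is well-defined on the torus since $\rho-\bar\rho$ has mean zero and satisfies $\mathrm{div}\,v=\rho-\bar\rho$ together with $\|v\|_{W^{1,p}}\leq C(p)\|\rho-\bar\rho\|_{L^p}$ by the boundedness of Riesz transforms. Because $\rho_s=\bar\rho$ is a constant and $\nabla f\equiv 0$, the momentum equation reduces to
\begin{equation*}
\nabla(\rho^\gamma-\bar\rho^\gamma)=R\triangleq-\partial_t(\rho u)-\mathrm{div}(\rho u\otimes u)+\mu\Delta u+\nabla((\mu+\lambda)\mathrm{div}u)\triangleq\sum_{i=1}^4 R_i.
\end{equation*}
Testing this identity against $v$ and integrating by parts on the left, the Taylor-expansion analysis carried out in \eqref{S338}--\eqref{CC329} (now trivialised because $\bar\rho$ is constant and positive) yields
\begin{equation*}
CA_2^2\leq\int_\Omega(\rho^\gamma-\bar\rho^\gamma)(\rho-\bar\rho)\,dx=-\sum_{i=1}^4\int_\Omega R_i\cdot v\,dx.
\end{equation*}

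Next I would bound the four $R_i$-terms. For $R_1$, integration by parts in $t$ gives a total-derivative term $D'(t)\triangleq\frac{d}{dt}\int\rho u\cdot v\,dx$ (uniformly bounded thanks to \eqref{C513}) plus $\int\rho u\cdot\partial_t v\,dx$; using mass conservation $\partial_t v=-\nabla\Delta^{-1}\mathrm{div}(\rho u)$ and the $L^2$-boundedness of the Riesz transform, this is controlled by $C\|\rho u\|_{L^2}^2\leq CA_1^2$. For $R_2$, one has $-\int R_2\cdot v=-\int\rho u\otimes u:\nabla v$, which is bounded by $C\|\rho\|_{L^\infty}\|u\|_{L^4}^2\|\rho-\bar\rho\|_{L^2}$; here $\|u\|_{L^4}$ must be split as $\|u-\overline{\rho u}\|_{L^4}+|\overline{\rho u}|$ and controlled via the modified Poincar\'e inequality \eqref{C502} together with \eqref{C501}, giving a contribution of the form $CA_1^2+\varepsilon A_2^2$. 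For $R_3$, integration by parts turns it into $\mu\int\nabla u:\nabla v$, bounded by $CA_1^2+\varepsilon A_2^2$.

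The serious term is $R_4$, for which integration by parts yields
\begin{equation*}
-\int_\Omega R_4\cdot v\,dx=\int_\Omega(\mu+\lambda)\mathrm{div}u\cdot(\rho-\bar\rho)\,dx.
\end{equation*}
Writing $\lambda=\rho^\beta\leq C((\rho-M_1)_+^\beta+M_1^\beta)$ with $M_1$ from Proposition \ref{P52}, the $M_1^\beta$-piece is absorbed into the $R_3$-type bound. For the large-density piece I would follow \eqref{CC341} exactly: choose the exponent $q=2\beta(\gamma+1)/(\gamma-1)$ (legitimate because $\gamma>1$) and apply Young's inequality to obtain
\begin{equation*}
\int_\Omega(\rho-M_1)_+^\beta|\mathrm{div}u||\rho-\bar\rho|\,dx\leq C\int_\Omega(\rho-M_1)_+^{q}\,dx+CA_1^2+\varepsilon A_2^2.
\end{equation*}

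Collecting everything and choosing $\varepsilon$ small produces
\begin{equation*}
A_2^2\leq D'(t)+CA_1^2+C\int_\Omega(\rho-M_1)_+^{q}\,dx,
\end{equation*}
which, integrated over $[0,T]$, gives \eqref{C516} once we invoke Proposition \ref{L31} for $\int_0^T A_1^2\,dt$, Proposition \ref{P52} for the global-in-time integrability of $(\rho-M_1)_+^{q}$, and the uniform bound $|D(t)|\leq C$ coming from \eqref{C513} and \eqref{C515}. The main obstacle is this $R_4$ term: because $\lambda=\rho^\beta$ can be large, the estimate can only be closed by trading the excess density against the damping-type integrability that Proposition \ref{P52} provides, and this is precisely the reason that the sharper $L^p$-bound of Corollary \ref{L33} (in its periodic incarnation, Proposition \ref{P52}) was developed in advance. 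A secondary nuisance specific to the periodic case is that $\|u\|_{L^p}$ is not directly controlled by $\|\nabla u\|_{L^2}$; this is circumvented via the modified Poincar\'e inequality \eqref{C502} together with the conservation law \eqref{C501}.
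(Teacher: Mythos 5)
Your overall architecture is the paper's: test $\nabla(\rho^\gamma-\bar\rho^\gamma)=R$ against $v=\nabla\Delta^{-1}(\rho-\bar\rho)$, get $cA_2^2$ on the left, treat the viscosity term with $\lambda=\rho^\beta$ by splitting off $(\rho-M_1)_+^q$ with $q=2\beta(\gamma+1)/(\gamma-1)$ and invoking Proposition \ref{P52}, and use \eqref{C501}--\eqref{C502} to compensate for the failure of Poincar\'e. However, your treatment of the time-derivative term $R_1$ has a genuine gap. After writing $-\int R_1\cdot v\,dx=\frac{d}{dt}\int\rho u\cdot v\,dx-\int\rho u\cdot\partial_t v\,dx$ with $\partial_t v=-\nabla\Delta^{-1}\mathrm{div}(\rho u)$, you bound the second term by $C\|\rho u\|_{L^2}^2\leq CA_1^2$. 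The inequality $\|\rho u\|_{L^2}^2\leq CA_1^2$ is exactly what fails on the torus: $A_1$ only controls $\nabla u$, and $\overline{\rho u}$ is a fixed nonzero constant in general (take $u$ near a constant to see $A_1\to 0$ while $\|\rho u\|_{L^2}\not\to 0$). If you try to repair it by subtracting means (using that the integral of a gradient vanishes and that $\nabla\Delta^{-1}\mathrm{div}$ kills constants), the term
\begin{equation*}
\int_\Omega\rho u\cdot\nabla\Delta^{-1}\mathrm{div}(\rho u)\,dx\leq C\big\|\rho(u-\overline{\rho u})\big\|_{L^2}^2+C\,|\overline{\rho u}|^2\,\|\rho-\bar\rho\|_{L^2}^2
\end{equation*}
still contains the piece $C|\overline{\rho u}|^2\|\rho-\bar\rho\|_{L^2}^2$ with a fixed, non-small constant: it is comparable to the left-hand side $A_2^2$, so it cannot be absorbed, and its time-integrability is precisely what Proposition \ref{P54} is trying to prove, so it cannot be integrated away either; the smallness \eqref{C515} only converts it into a constant per unit time, which destroys the uniform-in-$T$ bound \eqref{C516}.

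This is why the paper does not differentiate $\rho u$: it uses the conservation law \eqref{C501} to write the acceleration as $\rho(u-\overline{\rho u})_t$ in \eqref{C517}, so that after moving the time derivative (see \eqref{C518}--\eqref{C522}) the mean-corrected factor $u-\overline{\rho u}$, which \emph{is} controlled by $\|\nabla u\|_{L^2}$ via \eqref{C502}, survives in both error terms $P_1,P_2$, and the remaining $\bar\rho$-part of $P_2$ is converted by one more integration by parts into $\bar\rho\int\mathrm{div}u\cdot\Delta^{-1}\mathrm{div}(\rho u)\,dx$, which again carries a factor $\|\nabla u\|_{L^2}$. You should redo $R_1$ in this form; the rest of your outline then matches the paper. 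Two smaller points: your $R_2$ bound invokes $\|\rho\|_{L^\infty}$, which is not available at this stage (only the finite-$p$ bounds of Proposition \ref{P52}; use $\|\rho\|_{L^p}$ and adjust the H\"older exponents), and the pure $\overline{\rho u}\otimes\overline{\rho u}$ part of the convective term must be handled using that $\nabla v$ has zero average together with the boundedness of $\|\rho-\bar\rho\|$, not by Young's inequality alone.
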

\begin{proof}
The outline is similar with Proposition \ref{L35}. Let us rewrite \eqref{S101}$_2$ as
\begin{equation}\label{C517}
\nabla(\rho^\gamma-\bar{\rho}^\gamma)=-\rho(u-\overline{\rho u})_t-\rho u\cdot\nabla u+\mu\Delta u+\nabla\big((\mu+\lambda)\mathrm{div}u\big).
\end{equation}
Multiplying \eqref{C517} by $\nabla\Delta^{-1}(\rho-\bar{\rho})$ and integrating over $\Omega$, we deduce that
\begin{equation}\label{C518}
\begin{split}
\int_\Omega(\rho^\gamma-\bar{\rho}^\gamma)(\rho-\bar{\rho})\,dx=\int_\Omega\rho(u-\overline{\rho u})_t\cdot\nabla\Delta^{-1}(\rho-\bar{\rho})\,dx+\mathcal{R}(x,t).
\end{split}
\end{equation}

In view of \eqref{S301}, \eqref{CC340}--\eqref{CC341} and \eqref{C513}, the remaining term $\mathcal{R}(x,t)$ can be bounded by
\begin{equation}\label{C519}
\mathcal{R}(x,t)\leq C\int_\Omega(\rho-M)_{+}^qdx+C\|\nabla u\|_{L^2}^2+\varepsilon\|\rho-\bar{\rho}\|_{L^2}^2,
\end{equation}
where $q=2\beta(\gamma+1)/(\gamma-1)>2$ and $\varepsilon>0$ will be determined later.

We focus on the first term of right side of \eqref{C518}. Observe that
\begin{equation*}
\begin{split}
&\int_\Omega\rho(u-\overline{\rho u})_t\cdot\nabla\Delta^{-1}(\rho-\bar{\rho})\,dx=\frac{d}{dt}\int_\Omega\rho(u-\overline{\rho u})\cdot\nabla\Delta^{-1}(\rho-\bar{\rho})\,dx+P_1+P_2,\\
\end{split}
\end{equation*}
where the remaining terms are given by
\begin{align*}
P_1&\triangleq\int_\Omega\mathrm{div}(\rho u)(u-\overline{\rho u})\cdot\nabla\Delta^{-1}(\rho-\bar{\rho})\,dx,\\
P_2&\triangleq\int_\Omega\rho(u-\overline{\rho u})\nabla\Delta^{-1}\mathrm{div}(\rho u)\,dx.
\end{align*}

Observe that by means of \eqref{S301} and \eqref{C513}, we also check that
\begin{equation}\label{C520}
\sup_{0\leq t\leq T}\int_\Omega\rho(u-\overline{\rho u})\cdot\nabla\Delta^{-1}(\rho-\bar{\rho})\,dx\leq C.
\end{equation}

Moreover, in view of \eqref{S202}, \eqref{C502}, and \eqref{C513}, we deduce that
\begin{equation}\label{C521}
\begin{split}
P_1&=-\int_\Omega\rho u\cdot\nabla\big((u-\overline{\rho u})\cdot\nabla\Delta^{-1}(\rho-\bar{\rho})\big) dx,\\
&\leq C\,\|\rho u\|_{L^4}\|\nabla u\|_{L^2}\|\rho-\bar{\rho}\|_{L^2}\\
&\leq C\|\nabla u\|_{L^2}^2+\varepsilon\|\rho-\bar{\rho}\|_{L^2}^2.
\end{split}
\end{equation}

Finally, to handle $P_2$, we check that
\begin{equation}\label{C522}
\begin{split}
P_2&=\int_\Omega(\rho-\bar{\rho})\,(u-\overline{\rho u})\cdot\nabla\Delta^{-1}\mathrm{div}(\rho u)\,dx-\bar{\rho}\int_\Omega\mathrm{div}u\cdot\Delta^{-1}\mathrm{div}(\rho u)\,dx\\
&\leq C\,\|\nabla u\|_{L^2}^2+\varepsilon\|\rho-\bar{\rho}\|_{L^2}^2, 
\end{split}
\end{equation}
where we have taking advantage of \eqref{S301}, \eqref{C502}, \eqref{C513}, and the fact
$$\bar{\rho}\int_\Omega\mathrm{div}u\,\Delta^{-1}\mathrm{div}(\rho u)\,dx=\bar{\rho}\int_\Omega\mathrm{div}u\,\Delta^{-1}(\mathrm{div}\big(\bar{\rho}u+(\rho-\bar{\rho})u)\big)\,dx.$$

Now, let us integrate \eqref{C518} with respect to $t$ and apply \eqref{S301}, \eqref{C513}, \eqref{C519}--\eqref{C522} to deduce that
$$\int_0^T\int_\Omega(\rho+1)^{\gamma-1}(\rho-\bar{\rho})^2\,dxdt\leq C\bigg(1+\varepsilon\,\|\rho-\bar{\rho}\|_{L^2(\Omega\times(0,T))}^2\bigg).$$
Setting $\varepsilon<1/2C$ gives \eqref{C516} and finishes the proof.
\end{proof}

We mention that Proposition \ref{P53} and \ref{P54} provide a crude description of the large time behavior of $\rho$. Then we establish further energy estimates in analogy with Proposition \ref{L41}.

\begin{proposition}
Under the same assumption of Theorem \ref{T12}, there exists a constant $C$ depending on $\Omega$, $\mu$, $\beta$, $\gamma$, $\|\rho_0\|_{L^{\infty}}$, and $\|u_0\|_{H^1}$ such that
\begin{equation}\label{C523}
\sup_{0\leq t\leq T}\int_\Omega\rho |u-\overline{\rho u}| ^{2+\nu}dx\leq C ,
\end{equation}
where we set
\begin{equation*}
\nu\triangleq R_T^{-\frac{\beta}{2}}\nu_{0},
\end{equation*}
for some suitably small generic constant $\nu_0\in (0,1)$ depending only on $\mu$ and $\Omega.$
\end{proposition}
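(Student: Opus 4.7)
The plan is to mirror the argument of Proposition \ref{L41} with the velocity replaced by the recentered quantity $v\triangleq u-\overline{\rho u}$. The conservation law \eqref{C501} ensures $\overline{\rho u}$ is independent of time and is also spatially constant, so $u_t=v_t$, $\nabla u=\nabla v$, $\mathrm{div}\,u=\mathrm{div}\,v$, $\mathrm{rot}\,u=\mathrm{rot}\,v$, and $(u\cdot\nabla)u=(u\cdot\nabla)v$. Consequently the momentum equation $\eqref{S101}_2$ (with $\nabla f\equiv 0$) rewrites as
\begin{equation*}
\rho(v_t+u\cdot\nabla v)=\mu\Delta v+\nabla\bigl((\mu+\lambda)\mathrm{div}\,v\bigr)-\nabla P.
\end{equation*}

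First I would multiply this identity by $(2+\nu)|v|^\nu v$ and integrate over the torus. A combination of mass conservation and periodicity turns the left-hand side into $\frac{d}{dt}\int_\Omega\rho|v|^{2+\nu}\,dx$, while integration by parts on the viscous terms (no boundary contribution) produces a coercive quantity $\int_\Omega|v|^\nu\bigl((1+\lambda)(\mathrm{div}\,v)^2+\omega^2\bigr)dx$, together with lower-order errors of the form $K_1$ appearing in the proof of Proposition \ref{L41}. For the pressure contribution, since $\bar{\rho}^\gamma$ is a spatial constant, I can freely replace $P$ by $P-\bar{\rho}^\gamma$ in the integration by parts and exploit $|P-\bar{\rho}^\gamma|\leq C(\rho+1)^{\gamma-1}|\rho-\bar{\rho}|$ to produce a term $K_2\leq C\int_\Omega(\rho+1)^{\gamma-1}|\rho-\bar{\rho}|\cdot|v|^\nu|\nabla v|\,dx$; note that the absence of an external force makes the $|v|^{\nu+1}$ contribution of \eqref{CC346} disappear entirely, so only the $|v|^\nu|\nabla v|$ piece survives.

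The control of $K_1$ is essentially identical to \eqref{CC345}, using the periodic analogue of Corollary \ref{L42} and the smallness of $\nu_0$. For $K_2$, the Navier-slip Poincar\'{e} inequality must be replaced by \eqref{C502}, which controls $\|v\|_{L^p}$ by $\|\nabla u\|_{L^2}$ modulo a factor involving $\|\rho\|_{L^\gamma}$. Combining \eqref{C502} with the uniform $L^p$ density bound of Proposition \ref{P52}, and choosing $\nu<(\gamma-1)/4$ so that $|\rho-\bar{\rho}|^{2/(1-\nu)}\leq C(\rho+1)^{\gamma-1}(\rho-\bar{\rho})^2$, yields the analogue of \eqref{CC347} and hence $K_2\leq C\widetilde{A}^2$, where $\widetilde{A}^2\triangleq A_1^2+\int_\Omega(\rho+1)^{\gamma-1}(\rho-\bar{\rho})^2\,dx$.

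Putting everything together and integrating in time against the global integrability of $\widetilde{A}^2$ guaranteed by Proposition \ref{L31} and Proposition \ref{P54} yields \eqref{C523}. The main technical obstacle is not any single estimate but rather the necessity of systematically splitting $u=v+\overline{\rho u}$ so that \eqref{C502} can play the role that the simpler Poincar\'{e} inequality $\|u\|_{L^p}\lesssim\|\nabla u\|_{L^2}$ played in the Navier-slip argument; the $\overline{\rho u}$ contributions are benign space-time constants, but they must be tracked carefully to ensure no spurious terms arise, especially in the pressure integration by parts where one must verify that the constant shift by $\bar{\rho}^\gamma$ is legitimate on the torus.
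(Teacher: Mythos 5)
Your proposal is correct and follows essentially the same route as the paper, which likewise proves \eqref{C523} by repeating the proof of Proposition \ref{L41} with $u$ replaced by $u-\overline{\rho u}$ (using $\overline{\rho u}_t=0$ from \eqref{C501}), multiplying the rewritten momentum equation by $(2+\nu)|u-\overline{\rho u}|^\nu(u-\overline{\rho u})$, and redoing the estimates \eqref{CC345}--\eqref{CC377} with the periodic Poincar\'e-type bound \eqref{C502} and the global integrability from Proposition \ref{P54}. Your additional remarks (dropping the $|v|^{\nu+1}$ term since $\nabla f\equiv0$, shifting the pressure by the constant $\bar{\rho}^\gamma$, and using the periodic analogue of Corollary \ref{L42}) are exactly the modifications the paper's sketch relies on.
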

\begin{proof}
The proof is the same as Proposition \ref{L41}, except that we replace $u$ by $(u-\overline{\rho u})$.

Note that $\overline{\rho u}_t=0$ due to \eqref{C501}, thus we can write \eqref{S101}$_2$ as
\begin{equation}\label{C524}
\rho\,\frac{d}{dt}(u-\overline{\rho u})-\nabla(\mu+\lambda)\mathrm{div}u-\mu\Delta u+\nabla P=0.
\end{equation}

Multiplying \eqref{C524} by $(2+\nu)\,|u-\overline{\rho u}|^\nu\,(u-\overline{\rho u})$, we carry out the similar calculations given by \eqref{CC345}--\eqref{CC377} to argue that
$$\sup_{0\leq t\leq T}\int_\Omega\rho |u-\overline{\rho u}| ^{2+\nu}dx\leq C,$$
which is \eqref{C523} and finishes the proof.
\end{proof}

We end this section by the proposition below for the periodic domain, whose proof can be found in \cite[Lemma 3.1]{HL}.
\begin{proposition}\label{P56}
Under the same assumptions of Theorem \ref{T11}, for any $\varepsilon\in(0,1)$, there is a constant $C $ depending only on $\varepsilon$, $\mu$, $\gamma$,  $\rho_0$, $u_0$, and $\Omega$ such that
\begin{equation*}
\sup_{0\leq t\leq T}\mathrm{log}(C+ A^2) +\int^T_0 \frac{B^2}{1+A^2}dt
\leq C R_T^{1+\varepsilon }.
\end{equation*}
Recalling that $A, B,$ and $R_T$ are defined in \eqref{CC344}.
\end{proposition}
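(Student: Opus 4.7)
The plan is to mirror the strategy of Proposition \ref{L44}, exploiting the simplifications offered by the periodic setting (no boundary integrals, no external force) while compensating for the lack of a pure Poincar\'e inequality by systematically replacing $u$ with $u-\overline{\rho u}$ and invoking \eqref{C501}--\eqref{C502}. First I would rewrite \eqref{S101}$_2$ as $\rho\dot u-\nabla G-\nabla^{\bot}\omega=0$, where $G=(2\mu+\lambda)\mathrm{div}\,u-(P-\bar\rho^{\gamma})$, multiply by $2\dot u$, and integrate over $\Omega$. Since $\partial\Omega=\emptyset$ and $\nabla f\equiv 0$, the identity collapses to
\[
B^{2}+\int_{\Omega}\bigl(G\,\mathrm{div}\dot u+\omega\,\mathrm{rot}\dot u\bigr)\,dx=0,
\]
which already removes the delicate boundary computations \eqref{CC361}--\eqref{CC363}.

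Next I would expand $\mathrm{div}\dot u$ and $\mathrm{rot}\dot u$ via $\dot u=u_t+u\cdot\nabla u$, just as in \eqref{CC373}, so that the left side becomes $\frac{d}{dt}E+B^{2}-R$ with $E(t)=\int_{\Omega}\bigl(\frac{G^{2}}{2(2\mu+\lambda)}+\frac{\omega^{2}}{2}\bigr)dx$ and $R$ collecting terms of the form $(|G|+|\omega|)|\nabla u|^{2}$ and $\mathbf{F}(\rho)G(|u-\overline{\rho u}|+|\nabla u|)$. To bound $R$, I first need the periodic analog of \eqref{S415}: since the vorticity now solves $\mu\Delta\omega=\nabla^{\bot}(\rho\dot u)$ on the torus without boundary data, standard $L^{p}$ theory gives $\|\nabla G\|_{L^{p}}+\|\nabla\omega\|_{L^{p}}\le CR_{T}^{1/2}(A+B)$, and the corresponding $H^{1}$ bound follows after pinning down the integral averages of $G$ and $\omega$ using \eqref{C513} and Proposition \ref{P54}. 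From this I obtain the torus version of Proposition \ref{L43}, namely $\|\nabla u\|_{L^{p}}\le CR_{T}^{\kappa/2+\varepsilon}(1+A)^{1-\kappa}(1+A+B)^{\kappa}$ with $\kappa=1-2/p$; the proof transposes verbatim once every Sobolev embedding applied to $u$ is rerouted through $u-\overline{\rho u}$ via \eqref{C502}.

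With these $L^{p}$ estimates in hand, the $R_{1}$ and $R_{2}$ bounds run exactly as in \eqref{CC370}--\eqref{CC376}, yielding $R\le CR_{T}^{1+\varepsilon}(1+A^{2})A^{2}+\varepsilon B^{2}$. Absorbing $\varepsilon B^{2}$ gives
\[
E'(t)+B^{2}\le CR_{T}^{1+\varepsilon}(1+A^{2})A^{2}.
\]
Since $E$ satisfies $C^{-1}(A^{2}-1)\le E\le C(A^{2}+1)$ by \eqref{C513} and the equivalent of \eqref{CC348}, dividing by $C+E\sim 1+A^{2}$ and integrating in $t$ on $[0,T]$ produces the desired inequality in exactly the form of \eqref{S416}.

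The main obstacle, in my view, is not a single estimate but the bookkeeping needed to keep every substitution $u\rightsquigarrow u-\overline{\rho u}$ compatible with the interpolation chain in Proposition \ref{L43}: one must verify that \eqref{C502} supplies the Poincar\'e-type factor $\|\nabla u\|_{L^{2}}$ in each step without forcing a larger power of $R_{T}$ than $R_{T}^{\kappa/2+\varepsilon}$. A secondary subtlety is the interplay between $\bar\rho^{\gamma}$ (replacing $\rho_{s}^{\gamma}$) and the commutator-based decomposition used earlier for $F-\bar F$; here one must check that the integral-average correction in \eqref{C502} is compatible with the pressure expansion absorbed into $G$, so that the gain $\|\rho-\bar\rho\|_{L^{2}}\le A_{2}$ inherited from Proposition \ref{P54} is not lost when estimating the $R_{2}$-type term.
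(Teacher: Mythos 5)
Your proposal is correct and follows essentially the route the paper intends: the paper does not write out a proof here but simply cites \cite[Lemma 3.1]{HL}, whose argument is exactly the material-derivative energy estimate of Proposition \ref{L44} transplanted to the torus, which is what you reconstruct (no boundary integrals, no force, averages of $G$, $\omega$ pinned down via \eqref{C513} and Proposition \ref{P54}, and the final division by $C+E\sim 1+A^2$). One minor simplification you could note: since $P_s=\bar\rho^{\gamma}$ is constant and $\nabla f\equiv 0$, the term $\mathbf{F}(\rho)G|u|$ (coming from $u\cdot\nabla P_s$ in the bounded case) never appears, so most of the $u\rightsquigarrow u-\overline{\rho u}$ bookkeeping you worry about is not actually needed in this proposition.
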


\subsection{Proof of Theorem \ref{T12}}
\quad Now we follow the same way as
Section \ref{S5} to finish Theorem \ref{T12}. The situation is even simpler than Navier-slip boundary condition, hence we only sketch the proof.

\textit{Proof of Theorem \ref{T12}.}
Note that \eqref{C503} provides that
$$\frac{d}{dt}\theta(\rho)+P=-\frac{d}{dt}F_1+[u_i,R_{ij}]\,(\rho u_j)-\bar{F}.$$
According to \cite[Lemma 2.5]{HL}, we can write the commutator as the singular integral
\begin{align}\label{C525}
[u_i,R_{ij}]\,(\rho u_j)(x)=p.v.\int_\Omega K_{ij}(x-y)\big(u_i(x)-u_i(y)\big)\cdot\rho u_j(y)\,dy,
\end{align}
where the kernel $K_{ij}(x)$ has a singularity of second order at 0 and
$$K_{ij}(x)\leq C|x|^{-2}~~x\in\Omega.$$

The representation \eqref{C525} coincides with the principal term $J(x,t)$ in \eqref{CC401}, see \cite[Proposition 4]{FLL}. Moreover, observe that \eqref{CC435}, \eqref{C513}, and \eqref{C523} ensure that
$$\sup_{0\leq t\leq T}|\bar{F}|\leq C,~~\sup_{0\leq t\leq T}|F_1| \leq CR_T^{1+\frac{\beta}{4}+\varepsilon}.$$

Consequently, with the help of Proposition \ref{P52}--\ref{P56}, we apply the same arguments as  Lemma \ref{L48}, Proposition \ref{L47}--\ref{L52} to derive \eqref{S111} and \eqref{S112}. The proof of Theorem \ref{T12} is therefore completed.
\thatsall

\section{Appendix II: Elliptic estimates}
\quad We provide a proof for Assertions 3) given by Lemma \ref{L23}. For complete discussions about the elliptic systems, see \cite{adn}.

\textbf{Lemma\  2.2.}
\textit{Suppose that $(F,\omega)$ satisfies}
\begin{equation}\label{CC508}
\begin{cases}
\nabla F+\nabla^\bot\omega=\mathrm{div}(u\otimes v)\ &\textit{in $\Omega$},\\
\int_\Omega Fdx=0,\ \omega=0\
&\textit{on $\partial\Omega$}.
\end{cases}
\end{equation}

3) \textit{If $u\cdot v\in L^p(\Omega)$, $\mathrm{div}(u\otimes v)=\partial_i(u_i\cdot v_j)$ with $u\cdot n=0$ on $\partial\Omega$, then we can find some constant $C(p)$ depending only on $\Omega$, such that}
\begin{equation}\label{CC503}
\| F\|_{L^p}+\| \omega\|_{L^p}
\leq C(p)\|u\cdot v\|_{L^p}.
\end{equation}

\begin{proof}
We will apply the duality arguments due to V. A. Solonikov to finish the proof. Let us operate $\nabla^\bot$ on \eqref{CC508} and find that
\begin{equation}\label{CC505}
\Delta\omega=\nabla^\bot \mathrm{div}(u\otimes v).
\end{equation}

First, for any $g\in C_0^\infty(\Omega)$, let us consider a Dirichlet problem,
\begin{equation*}
\begin{cases}
\Delta G_{1}=g\ &\mbox{in}\ \Omega,\\
G_{1}=0 \ &\mbox{on}\ \partial\Omega.\\
\end{cases}
\end{equation*}
The standard $L^p$ elliptic estimates (see \cite{gilbarg2015elliptic}) ensure that, for any $1<p<\infty$, we have
\begin{equation}\label{CC504}
\|\nabla^2G_{1}\|_{L^p}\leq C\|g\|_{L^p}.
\end{equation}

Now, we check that
\begin{equation*}
\begin{split}
\int_\Omega\omega\cdot g\, dx=\int_\Omega\omega\cdot\Delta G_{1}\, dx=\int_\Omega\Delta\omega\cdot G_{1}\, dx.
\end{split}
\end{equation*}
We mention that no boundary term occurs since $\omega=G_{1}=0$ on $\partial\Omega$. Thus, in view of \eqref{CC505}, we deduce
\begin{equation}\label{CC506}
\begin{split}
\int_\Omega\Delta\omega\cdot G_{1}\, dx&=\int_\Omega\nabla^\bot\mathrm{div}(u\otimes v)\cdot G_{1}\, dx\\
&=-\int_\Omega\mathrm{div}(u\otimes v)\cdot\nabla^\bot G_{1}\, dx\\
&=\int_\Omega(u\otimes v)\cdot \nabla^2G_{1}\, dx,\\
\end{split}
\end{equation}
Once again, no boundary term occurs since $u\cdot n=G_1=0$ on $\partial\Omega$.

Consequently, by virtue of \eqref{CC504} and \eqref{CC506}, we declare
$$\bigg|\int_\Omega\omega\cdot g\, dx\bigg|=\bigg|\int_\Omega(u\otimes v)\cdot\nabla^2 G_1\, dx\bigg|
\leq C \|u\cdot v\|_{L^p}\|g\|_{L^{p/(p-1)}},$$
which in particular implies
\begin{equation}\label{CC507}
\|\omega\|_{L^p}\leq C\|u\cdot v\|_{L^p}.
\end{equation}

Next, for $g\in C_0^\infty(\Omega)$, let us consider a Neumann problem
\begin{equation}\label{CC509}
\begin{cases}
\Delta G_{2}=g-\bar{g}\ &\mbox{in}\ \Omega,\\
n\cdot\nabla G_{2}=0 \ &\mbox{on}\ \partial\Omega.\\
\end{cases}
\end{equation}
The system \eqref{CC509} is solvable due to $\int_\Omega(g-\bar{g})\, dx=0$, and the standard $L^p$ elliptic estimates (see \cite{gilbarg2015elliptic}) guarantee that, for any $1<p<\infty$, we have
\begin{equation}\label{CC510}
\|\nabla^2G_{2}\|_{L^p}\leq C\|g\|_{L^p}.
\end{equation}

Observe that $\int_\Omega F\, dx=0$ implies
\begin{equation*}
\begin{split}
\int_\Omega F\cdot g\, dx=\int_\Omega F\cdot(g-\bar{g})\, dx=\int_\Omega F\cdot\Delta G_2dx=\int_\Omega\nabla F\cdot\nabla G_2\, dx.
\end{split}
\end{equation*}
We mention that $n\cdot\nabla G_2=0$ on $\partial\Omega$ cancels the boundary term. Thus in view of \eqref{CC508}, we also deduce that
\begin{equation}\label{CC511}
\begin{split}
\int_\Omega\nabla F\cdot\nabla G_2\, dx
&=-\int_\Omega\nabla^\bot\omega\cdot\nabla G_2\, dx+\int_\Omega\mathrm{div}(u\otimes v)\cdot\nabla G_2\, dx\\
&=\int_\Omega (u\otimes v)\cdot\nabla^2 G_2\, dx,
\end{split}
\end{equation}
where we have made use of the fact $\nabla^\bot\cdot\nabla=0$. Moreover, there is no boundary term since $u\cdot n=\omega=0$ on $\partial\Omega$.

Combining \eqref{CC510} with \eqref{CC511} leads to
\begin{equation*}
\bigg|\int_\Omega F\cdot g\, dx\bigg|=\bigg|\int_\Omega (u\otimes v)\cdot\nabla^2 G\, dx\bigg|
\leq C\|u\cdot v\|_{L^p}\|g\|_{L^{p/(p-1)}},
\end{equation*}
which is equivalent to
\begin{equation}\label{CC513}
\|F\|_{L^p}\leq C\|u\cdot v\|_{L^p}.
\end{equation}

Collecting \eqref{CC507} and \eqref{CC513} gives \eqref{CC503} and finishes the proof.
\end{proof}
\begin {thebibliography} {99}


\bibitem{adn}
S. Agmon, A. Douglis, L. Nirenberg,
Estimates near the boundary for solutions of elliptic partial differential equations satisfying general boundary conditions II.
Commun. Pure Appl. Math.,
\textbf{17}(1) (1964),
35-92.

\bibitem{Aramaki2014Lp}
  J.  Aramaki,
  $L^p$ theory for the div-curl system.
  Int.  J.  Math.  Anal.,
  {\bf 8}(6) (2014),
  259-271.

\bibitem{BD1}
D. Bresch, B. Desjardins,
Existence of global weak solutions for a 2D viscous
shallow water equations and convergence to the quasi-geostrophic model.
Commun. Math. Phys.,
{\bf 238} (2003), 211-223.

\bibitem{BD2}
D. Bresch, B. Desjardins,
 Some diffusive capillary models for Korteweg type. C. R. Mecanique,
{\bf 331} (2003).

\bibitem{BD3}
D. Bresch, B. Desjardins, C. K. Lin,
On some compressible fluid models: Korteweg, lubrication, and shallow water systems.
Commun. Partial Differential Equations,
{\bf 28} (2003), 843-868.

\bibitem{BD4}
D. Bresch, P. Noble,
Mathematical justification of a shallow water model. Methods Appl. Anal.,
{\bf 14} (2007), no. 2, 87-117.

\bibitem{BD5}
D. Bresch, P. Noble,
Mathematical derivation of viscous shallow-water equations with zero surface tension.
Indiana Univ. Math. J.,
{\bf 60} (2011), 1137-1169.

\bibitem{caili01}G. C. Cai,  J. Li, Existence and exponential growth of  global classical solutions to the  compressible Navier-Stokes equations with slip boundary conditions in 3D bounded domains. arXiv: 2102.06348.


\bibitem{2010Partial}L.  C.  Evans,
  Partial differential equations: Second edition,
  2010.

\bibitem{FLL}
X. Y. Fan, J. X. Li, J. Li,
Global existence of strong and weak solutions to 2D compressible Navier-Stokes system in bounded domains with large data and vacuum. arXiv: 2102.09229.

\bibitem{FP}
E.  Feireisl, H.  Petzeltov\'{a},
Large-time behaviour of solutions to the Navier-Stokes equations of compressible flow.
Arch. Ration. Mech. Anal.,
{\bf 150}(1) (1999),
77-96.

\bibitem{feireisl2004dynamics}
E.  Feireisl,   A. Novotny, H.  Petzeltov\'{a},
On the existence of globally defined weak solutions to
the Navier-Stokes equations.
J. Math. Fluid Mech.,
{\bf 3} (2001),
358-392.

\bibitem{Fri}
H. Frid, D. Marroquin, J. F. C. Nariyoshi,
Global smooth solutions with large data for a system modeling aurora type phenomena in the 2-torus.
SIAM J. Math. Anal.,
{\bf 53} (2021),
1122-1167.

\bibitem{gilbarg2015elliptic}D.  Gilbarg, N.  S.  Trudinger,
  Elliptic partial differential equations of second order,
  {Springer},
  2015.

\bibitem{LXY}
T. P. Liu, Z. P. Xin, T. Yang,
Vacuum states for compressible flow.
Discrete Contin. Dynam. Systems,
{\bf 4} (1998), 1-32.

\bibitem{1995Global} D.  Hoff,
  Global solutions of the Navier-Stokes equations for multidimensional compressible flow with discontinuous initial data.
  {J. Differ. Eqs.},
  {\bf 120}(1) (1995),
  215-254.

\bibitem{hoff2005} D.  Hoff,
  Compressible flow in a half-space with Navier boundary conditions.
  J.  Math.  Fluid Mech.,
  {\bf 7}(3) (2005),
  315-338.

\bibitem{HLX} F. M. Huang, J. Li, Z. P. Xin,
Convergence to equilibria and blowup behavior of global strong solutions to the Stokes approximation equations for two-dimensional compressible flows with large data.
J. Math. Pures Appl., {\bf 86} (2006),  471-491.

\bibitem{HL} X. D.  Huang, J.  Li,
  Existence and blowup behavior of global strong solutions to the two-dimensional barotropic compressible Navier-Stokes system with vacuum and large initial data.
  J.  Math.  Pures Appl.,   {\bf 106}(1) (2016),  123-154.

\bibitem{hl21} X. D. Huang,   J.  Li, Global well-posedness of classical solutions to the Cauchy problem of two-dimensional baratropic compressible Navier-Stokes system with vacuum and large initial data.
 SIAM J. Math. Anal., {\bf 54}(3) (2022),  3192-3214.

\bibitem{hlx21}X. Huang, J. Li, Z. P. Xin, Global well-posedness of classical solutions with large oscillations and vacuum to the three-dimensional isentropic compressible Navier-Stokes equations. Commun. Pure Appl. Math., {\bf 65} (2012), 549-585.

\bibitem{jwx}Q. S. Jiu,  Y. Wang,  Z. P. Xin,   Global well-posedness of 2D compressible Navier-Stokes equations with large data and vacuum. J. Math. Fluid Mech., {\bf 16} (2014),  483-521.

 \bibitem{jwx1} Q. S. Jiu,  Y. Wang,  Z. P. Xin,   Global classical solution to two-dimensional compressible Navier-Stokes equations with large data in $\mathbb{R}^2$. Phys. D, {\bf 376/377} (2018), 180-194.

\bibitem{LXZ}
H. L. Li, Z. P. Xin, X. W. Zhang,
Existence and uniqueness of global strong solutions to a free boundary value problem for 2D spherically symmetric compressible Navier-Stokes equations.
Phys. D,
{\bf 377} (2018), 185-199.



\bibitem{lx01} J. Li, Z. P. Xin, Global well-posedness and large time asymptotic
behavior of classical solutions to the compressible
Navier-Stokes equations with vacuum. Annals of PDE, {\bf 5}  (2019), 7.

\bibitem{1998Mathematical} P.  L.  Lions,
  Mathematical topics in fluid mechanics, vol.  2, Compressible models, Oxford University Press, New York, 1998.

\bibitem{1980The}A.  Matsumura, T.  Nishida,
  The initial value problem for the equations of motion of viscous and heat-conductive gases.
  J. Math. Kyoto Univ.,
  {\bf 20}(1) (1980),
  67-104.

\bibitem{1992Stability}
A. Matsumura, M. Padula,
Stability of stationary flow of compressible fluids subject to large external potential forces.
Stab. Appl. Anal. Cont. Media,
{\bf 2} (1992),
182-202

\bibitem{Mitrea2005Integral}
  D.  Mitrea,
  Integral equation methods for div-curl problems for planar vector fields in nonsmooth domains.
  Differ.  Int.  Equ.,
 {\bf 18}(9) (2005),
  1039-1054.

\bibitem{Mu}
F. Murat,
Compacité par compensation.
Ann. Scuola Norm. Sup. Pisa Cl. Sci.,
\textbf{5}(3) (1978),
489-507.

\bibitem{Nash}
J. Nash,
Le problme de Cauchy pour les ´equations diff´erentielles d’un fluide g´en´eral.
Bull. Soc. Math. France,
{\bf 90} (1962),
487-497.

\bibitem{NS}
A. Novotny, I. Strakraba,
Stabilization of weak solutions to compressible Navier-Stokes equations.
Commun. Math. Phys.,
{\bf 40}(2) (2000),
169-204.

\bibitem{PP}
M. Perepelitsa,
 On the global existence of weak solutions for the Navier-Stokes equations of compressible
fluid flows.
SIAM J. Math. Anal.,
{\bf 38} (2006), 1126-1153.

\bibitem{2016Representation}  M.  A.  Sadybekovand, B.  T.  Torebek, B.  Kh Turmetov,
  Representation of Green's function of the Neumann problem for a multi-dimensional ball.
  {Complex Variables and Elliptic Equations},
  {\bf 61}(1) (2016),
  104-123.

\bibitem{ser1}J. Serrin, On the uniqueness of compressible fluid motion. Arch. Ration. Mech. Anal., {\bf 3} (1959), 271-288.

\bibitem{2007Complex}E. M.  Stein,  R.  Shakarchi, Complex analysis,  Princeton University Press,  2003.

\bibitem{tal1}G. Talenti,
Best constant in Sobolev inequality.  Ann. Mat. Pura Appl., {\bf 110} (1976), 353-372.

\bibitem{Tar}
L. Tartar,
Compensated compactness and applications to partial differential equations.
Nonlinear Anal. and  Mech., Heriot-Watt Symposiumx,
(1979), 136-212.

\bibitem{Wahl1992Estimating}
W. Von Wahl,
Estimating $u$ by div$u$ and curl$u$.
Math.  Methods  Appl.  Sci.,
  {\bf 15}(2) (1992),
  123-143.

\bibitem{vaigant1995}
V. A. Vaigant, A. V. Kazhikhov,
On existence of global solutions to the two-dimensional Navier-Stokes equations for a compressible viscous fluid.
  Sib.  Math.  J.,
  {\bf 36}(6) (1995),
  1108-1141.

\bibitem{Zho}
Y. Zhou,
An $L^{p}$ theorem for compensated compactness.
Proc. Roy. Soc. Edinburgh Sect.,
\textbf{122}(1-2) (1992),
177-189.

\bibitem{zlt}
A. A. Zlotnik,
Uniform estimates and stabilization of symmetric solutions of a system of quasilinear equations.
Differ. Equ.,
\textbf{36}(5) (2000),
701-716.

\end{thebibliography}
\end{document}